\def\be{\begin{equation}}
\def\ee{\end{equation}}
\def\ba{\begin{array}}
\def\ea{\end{array}}
\newtheorem{thm}{Theorem}[section]
\newtheorem{lem}[thm]{Lemma}
\numberwithin{equation}{section}
\newcommand{\mi}{\mathbf{i}}
\def\be{\begin{equation}}
\def\ee{\end{equation}}
\def\br{\begin{eqnarray}}
\def\er{\end{eqnarray}}
\title{Invariant Cantor manifolds of quasi-periodic solutions for the derivative nonlinear Schr$\ddot{\mbox{o}}$dinger equation
}
\author{
 $\mbox{Meina \ Gao}$\\
 $\mbox{College of Arts and Sciences, Shanghai Polytechnic University}$\\ $\mbox{Shanghai 201209,
P R China,}$\\
 $\mbox{mngao@sspu.edu.cn}$\\
        $\mbox{Jianjun \ Liu}$
        \\$\mbox{School of Mathematics, Sichuan University}$\\  $\mbox{ Chengdu 610065, P R
China}$\\ $\mbox{liujj@fudan.edu.cn}$}
\date{}
\begin{document}\maketitle
\textbf{Abstract}: This paper is concerned with the derivative nonlinear Schr$\ddot{\mbox{o}}$dinger equation with periodic boundary conditions
$$\mathbf{i}u_t+u_{xx}+\mathbf{i}\Big(f(x,u,\bar{u}
)\Big)_x=0,\quad x\in\mathbb{T}:=\mathbb{R}/2\pi\mathbb{Z},$$
where $f$ is an analytic function of the form
$$f(x,u,\bar{u})=\mu|u|^2u+f_{\geq4}(x,u,\bar{u}),\quad 0\neq\mu\in\mathbb{R},$$
and $f_{\geq4}(x,u,\bar{u})$ denotes terms of order at least four in $u,\bar{u}$. We show the above equation possesses Cantor families of smooth quasi-periodic solutions of small amplitude. The proof is based on an infinite dimensional KAM theorem for unbounded perturbation vector fields.

\section{Introduction and Main Results}
In this paper, we consider the derivative nonlinear Schr$\ddot{\mbox{o}}$dinger equation with periodic boundary conditions
\begin{equation}\label{1.1}
\mathbf{i}u_t+u_{xx}+\mathbf{i}\Big(f(x,u,\bar{u})
\Big)_x=0,\quad x\in\mathbb{T},\end{equation}
where $f$ is an analytic function of the form
\begin{equation}f(x,u,\bar{u})=
\mu|u|^2u+f_{\geq4}(x,u,\bar{u}),\quad 0\neq\mu\in\mathbb{R},\end{equation}
and $f_{\geq4}(x,u,\bar{u})$ denotes terms of order at least four in $u,\bar{u}$. Moreover, we require
\begin{equation}f_{\geq4}(x,u,\bar{u})=
\frac{\partial F_{\geq5}}{\partial{\bar{u}}}(x,u,\bar{u}),\end{equation}
such that \eqref{1.1} can be viewed as a Hamiltonian system, where $F_{\geq5}(x,u,\bar{u})$ is a real analytic function of order at least five in $u,\bar{u}$. As in \cite{Poschel3}, we may assume $\mu=1$. Then the equation \eqref{1.1} can be regarded as
a perturbation of the following equation:
\begin{equation}\label{1.2}\mathbf{i}u_t+u_{xx}+
\mathbf{i}(|u|^2u)_x=0,\end{equation}
which appears in various physical applications and has been widely studied in the literature.

We study \eqref{1.1} as a Hamiltonian system on some suitable phase space $\mathcal{P}$, for example, we may take $H_0^2(\mathbb{T})$, the usual Soblev space on $\mathbb{T}$ with vanishing average. Under the standard inner product on $L^2(\mathbb{T})$, \eqref{1.1} can be written in the form
\begin{equation}\label{1.3}\frac{\partial u}{\partial t}=-\frac{d}{dx}\frac{\partial H}{\partial \bar{u}}\end{equation}
with the real analytic Hamiltonian
\begin{equation}\label{1.4}H=-\mathbf{i}
\int_{\mathbb{T}}u_x\bar{u}dx+
\frac12\int_{\mathbb{T}}|u|^4dx+
\int_{\mathbb{T}}F_{\geq5}(x,u,\bar{u})dx.\end{equation}

 We will construct Cantor families of time quasi-periodic solutions of small amplitude.
 A result of similar form was firstly obtained in
\cite{Poschel3} by Kuksin and P\"{o}schel for the nonlinear
Schr\"{o}dinger equation with Dirichlet boundary conditions
\begin{equation*}
{\mi}u_t=u_{xx}-mu-f(|u|^2)u,\hspace{12pt}u(t,0)=0=u(t,\pi),
\end{equation*}
where $m$ is real, $f$ is real analytic in some neighborhood of the
origin in $\mathbb{C}$, $f(0)=0$, and $f'(0)\neq0$. For convenience,
we keep fidelity with the notation and terminology from \cite{Poschel3}. Let
$$\phi_j(x)=\frac{1}{\sqrt{2\pi}}e^{{\mi}jx},
\hspace{12pt}j\in\bar{\mathbb{Z}}:
=\mathbb{Z}\setminus\{0\}$$
be the basic modes. For every index set
$$J=\{j_1<j_2<\cdots<j_n\}\subset\bar{\mathbb{Z}},$$
denote by $E_J$ the linear subspace of complex dimension $n$
which is completely foliated into rotational tori
$$E_J=\{u=q_1\phi_{j_1}+\cdots+q_n\phi_{j_n}
:q\in\mathbb{C}^n\}=\bigcup_{I\in\overline{\mathbb{P}^n}}
\mathcal{T}_J(I),$$
where $\mathbb{P}^n=\{I\in\mathbb{R}^n:I_b>0,\ 1\leq b\leq n\}$ is
the positive quadrant in $\mathbb{R}^n$ and
$$\mathcal{T}_J(I)=\{u=q_1\phi_{j_1}+\cdots+q_n\phi_{j_n}
:|q_b|^2=I_b,\ 1\leq b\leq n\}.$$
 The
following is our result for (\ref{1.1}):

\begin{thm}\label{1.1}
For any integer $n\geq2$ and index set
$J=\{j_1<j_2<\cdots<j_n\}\subset\bar{\mathbb{Z}}
$ satisfying
 \begin{equation}\label{18.3.31.2}
 2n-1\nmid\sum_{b=1}^nj_b\end{equation}
 and additionally $j_1j_2<0$ if $n=2$,
 there exist:
\begin{itemize}
\item[{(1)}] a Cantor set $\mathcal{C}\subset\mathbb{P}^n$ with full
density at the origin;
\item[{(2)}] a Lipschitz embedding
$\Psi:\mathcal{T}_J[\mathcal{C}]\hookrightarrow\mathcal{P}$, which
is a higher order perturbation of the inclusion mapping
$\Psi_0:E_J
\hookrightarrow\mathcal{P}$ restricted to
$\mathcal{T}_J[\mathcal{C}]$,
where
$\mathcal{T}_J[\mathcal{C}]:=\bigcup_{I\in
\mathcal{C}}\mathcal{T}_J(I)\subset{E_J}$
is a family of $n$-tori over $\mathcal{C}$,
\end{itemize}
such that the image
$\mathcal{E}_J:=\Psi\big(\mathcal{T}_J[\mathcal{C}]\big)$ is a
Cantor manifold of diophantine $n$-tori for the derivative nonlinear
Schr\"{o}dinger equation (\ref{1.1}). Moreover, the restriction
of $\Psi$ to each torus $\mathcal{T}_J(I)$, $I\in\mathcal{C}$ is
smooth, and $\mathcal{E}_J$ has a tangent space at the origin equal
to $E_J$.
\end{thm}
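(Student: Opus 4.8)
The plan is to realise \eqref{1.1} as an infinite dimensional Hamiltonian system in the Fourier coordinates $u=\sum_{j\in\bar{\mathbb Z}}q_j\phi_j$ and then apply the KAM machinery. Inserting this expansion into \eqref{1.4}, the Hamiltonian takes, after normalisation, the form
\begin{equation*}
H=\sum_{j\in\bar{\mathbb Z}}j\,|q_j|^2+\frac{1}{4\pi}\!\!\!\sum_{j_1-j_2+j_3-j_4=0}\!\!\!q_{j_1}\bar q_{j_2}q_{j_3}\bar q_{j_4}+\int_{\mathbb T}F_{\geq5}\big(x,{\textstyle\sum_j q_j\phi_j},{\textstyle\sum_j\bar q_j\bar\phi_j}\big)\,dx,
\end{equation*}
where, because of the factor $-\tfrac{d}{dx}$ in \eqref{1.3}, the symplectic structure is the twisted one $\{q_j,\bar q_k\}=-\mathbf i\,j\,\delta_{jk}$; consequently the linear frequencies are $\lambda_j=j^2$, growing quadratically, while the nonlinear vector field loses one spatial derivative, so the perturbation is \emph{unbounded of order one}. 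We fix the tangential set $J=\{j_1,\dots,j_n\}$, split $u=v+z$ with $v\in E_J$ and $z$ supported on $\bar{\mathbb Z}\setminus J$, and note that $j$ and $-j$ are near-resonant normal sites.

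The next step is a partial Birkhoff normal form. On the circle the only resonant quartic monomials of $\tfrac12\int_{\mathbb T}|u|^4$ are the diagonal ones $|q_j|^2|q_k|^2$: the momentum relation $j_1-j_2+j_3-j_4=0$ together with vanishing of $j_1^2-j_2^2+j_3^2-j_4^2$ forces $\{j_1,j_3\}=\{j_2,j_4\}$. Hence a symplectic transformation tangent to the identity will normalise $H$ up to and including order four, pushing the non-resonant quartic part and all of $\int_{\mathbb T}F_{\geq5}$ into a remainder of order $\geq6$ (together with a part at least quadratic in $z$); the homological equations are solvable because the divisors $\sum_i\pm j_{a_i}^2$ of the eliminated monomials are bounded away from zero, the finitely many choices of indices in $J$ giving nonzero values. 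On $E_J$ the resulting integrable Hamiltonian will be a function of the actions $I_b=|q_{j_b}|^2/|j_b|$ alone, with internal frequencies
\begin{equation*}
\omega_b(\xi)=j_b^2+\frac{j_b}{\pi}\sum_{c}|j_c|\xi_c-\frac{j_b|j_b|}{2\pi}\,\xi_b,
\end{equation*}
$\xi$ denoting the amplitude vector of the reference torus; the twist matrix $\partial\omega/\partial\xi$ has determinant equal to a nonzero multiple of $(2n-1)\prod_{b}j_b|j_b|\neq0$, so that $\xi\mapsto\omega(\xi)$ is a local diffeomorphism at the origin.

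We then introduce action--angle variables around the reference torus, keeping $z$ as normal coordinate, so that
\begin{equation*}
H=\langle\omega(\xi),I\rangle+\sum_{j\notin J}\Omega_j(\xi)\,z_j\bar z_j+P(\theta,I,z,\bar z;\xi),\qquad
\Omega_j(\xi)=j^2+\frac{j}{\pi}\sum_{c}|j_c|\xi_c+O(|\xi|^2),
\end{equation*}
with $P$ small (of high order in $|\xi|$, thanks to the normal form) but unbounded of order one. One verifies the hypotheses of the infinite dimensional KAM theorem for unbounded perturbations that is the main tool of this paper: the nondegeneracy of $\omega(\xi)$; the asymptotics $\Omega_j\sim j^2$ and the gap estimate $\Omega_j-\Omega_{j'}=(j-j')(j+j'+O(|\xi|))$, which dominates the one-derivative loss of $P$ for $j\neq j'$, the near-collisions $\Omega_j\approx\Omega_{-j}$ being handled by $2\times2$ blocks; the analyticity of $P$ in a complex strip and its weighted regularity; and the Melnikov non-resonance conditions $\langle k,\omega(\xi)\rangle+\Omega_j(\xi)\neq0$ and $\langle k,\omega(\xi)\rangle+\Omega_j(\xi)\pm\Omega_l(\xi)\neq0$ for the relevant $k,j,l$. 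For the last point, a direct computation with the formulas above shows that the affine function $\xi\mapsto\langle k,\omega(\xi)\rangle+\Omega_j(\xi)$ (respectively the one with $\pm\Omega_l$) can vanish identically only if $j=\tfrac{2}{2n-1}\sum_b j_b$ (respectively $j\pm l=\tfrac{2}{2n-1}\sum_b j_b$); since $2n-1$ is odd, hypothesis \eqref{18.3.31.2} is exactly what excludes such an integer, and for $n=2$ the supplementary requirement $j_1j_2<0$ removes the remaining low-order degeneracy. The KAM theorem then provides, for $\xi$ ranging over a Cantor set $\mathcal C\subset\mathbb P^n$ of full density at the origin, a Lipschitz family of real analytic, linearly stable invariant $n$-tori carrying Diophantine quasi-periodic flows, together with a symplectic conjugacy $\Phi$ that is Lipschitz in $\xi$ and analytic in $(\theta,I,z,\bar z)$.

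Finally, composing the Birkhoff transformation, the action--angle substitution and the KAM conjugacy $\Phi$ — each a higher-order perturbation of the identity, tangent to the identity at $u=0$ — and returning to the variable $u$ yields the embedding $\Psi:\mathcal T_J[\mathcal C]\hookrightarrow\mathcal P$ of the statement; $\mathcal E_J=\Psi(\mathcal T_J[\mathcal C])$ is the desired Cantor manifold of Diophantine $n$-tori for \eqref{1.1}, the restriction of $\Psi$ to each $\mathcal T_J(I)$ is smooth because $\Phi$ is analytic in the angles, and the tangent space of $\mathcal E_J$ at $u=0$ equals $E_J$ since each transformation there is the identity plus higher order terms. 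The main obstacle throughout is the unboundedness of $P$: it forces the use of weighted analytic norms tracking the one-derivative loss, it makes the homological equation at each KAM step genuinely harder — the smallness gained from the small divisors must beat the derivative loss, which is precisely why the quadratic growth and the exact shape of the normal frequencies are exploited — and it must be propagated carefully through the measure estimates that produce the full-density set $\mathcal C$. For this reason it will be convenient to precede the KAM iteration by a gauge-type transformation regularising the worst part of the derivative nonlinearity $(|u|^2u)_x$.
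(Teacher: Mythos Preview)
Your outline follows the paper's broad architecture (Birkhoff normal form, action--angle variables, KAM), but it misidentifies the central difficulty and the paper's actual mechanism for resolving it.

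The sentence ``the near-collisions $\Omega_j\approx\Omega_{-j}$ being handled by $2\times2$ blocks'' is precisely what \emph{cannot} be done here, and the paper says so explicitly in the introduction. For unbounded perturbations at the limiting case $p-q=1$, the homological equations have \emph{large variable coefficients}, and the only available solver (Kuksin's lemma and the Liu--Yuan refinement, Lemma~\ref{lem17.7.31.1}) is strictly scalar; no matrix analogue exists. The paper's contribution is to keep the homological equations scalar throughout. It does this by introducing the momentum majorant norm \eqref{2.1.16}: at each KAM step, monomials $e^{\mathbf ik\cdot x}z_{-j}\bar z_j$ with large momentum $|\pi(k,-2j)|$ are \emph{not} eliminated but thrown back into the perturbation (they are small in the weighted norm after decreasing $\mathbf a$), while those with small momentum have $|j|\leq\Pi_\nu$ and are removed by a scalar equation with divisor $\langle k,\omega\rangle+\bar\Omega_{-j}-\bar\Omega_j$. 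This is why the KAM theorem carries the nonstandard assumption~(C): one needs a quantitative lower bound, not merely non-identical-vanishing, for the divisors $\langle k,\omega\rangle+\langle l,\Omega\rangle$ \emph{or} for their directional Lipschitz seminorm. Your ``gauge-type transformation regularising the worst part'' is likewise not in the paper; that is the Baldi--Berti--Montalto route for quasi-linear equations, whereas here the perturbation stays genuinely of order one and the whole point is to run KAM at that critical regularity.

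Your verification of the Melnikov conditions is also too coarse. The claim that identical vanishing forces $j=\tfrac{2}{2n-1}\sum_b j_b$ is only one of several cases, and assumption~(C) demands much more than non-vanishing: it demands the bound \eqref{18.3.18.3}. The paper devotes all of Lemma~\ref{lem18.3.27.1} to this, splitting into cases $|k|\gtrless 2\sum|jl_j|$ and then into subcases according to the shape of $l$; the hypothesis $2n-1\nmid\sum_b j_b$ is used only in subcase~2.4 ($l=e_i-e_j$ with $ij<0$), and $j_1j_2<0$ for $n=2$ only in subcase~2.2 ($l=e_i+e_j$). Without that case analysis, and without the momentum-norm mechanism replacing your block reduction, the argument does not close.
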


The above theorem is proved by using infinite dimensional KAM theory.
Historically, KAM theory for partial differential equations was originated by Kuksin \cite{Kuksin4} and Wayne \cite{Wayne}, where one dimensional nonlinear wave and Schr\"{o}dinger equations under Dirichlet boundary conditions were studied. Owing to the special boundary conditions and the absence of
spatial derivatives in the nonlinearity, the corresponding infinite dimensional Hamiltonian systems have simple normal frequencies and bounded perturbations. Then infinite dimensional KAM theory for bounded perturbations was deeply investigated, including both simple and multiple normal frequencies. See
\cite{Berti7,Berti4,Bourgain1,Bourgain2,
CYou,Criag-Wayne,Eliasson1,Eliasson,
GY,GY1,Gre1,Gre,K4,K5,Poschel3,procesi,P,P2,Y,Y3} for example.
(We can not list all papers in this field.)

Since the nonlinearity
of (\ref{1.1}) contains a spatial derivative, a suitable KAM theorem for unbounded perturbation vector field is required for our result. The first KAM
theorem for unbounded perturbations is due to Kuksin \cite{Kuksin3,Kuksin1}. Under the assumption $0<\delta<d-1$ ($d$ is the order
of the linear vector field and $\delta$ is the order of the perturbation vector field), a suitable estimate, which is now called Kuksin's lemma, is proved for the small-denominator equation with large variable coefficient. By using this estimate, a
KAM theorem is established to prove the persistence of the finite-gap solutions of the KdV equation. For the case $0<\delta<d-1$, also see \cite{KP} for perturbed KdV equations by Kappeler and P\"{o}schel, and \cite{Bambusi} for a class of time dependent Schr$\ddot{\mbox{o}}$dinger operators by Bambusi and Graffi. More recently, KAM theory for unbounded
perturbations has been extended by Liu-Yuan \cite{L-Y2,L-Y1}. The small-denominator equation
with large variable coefficient is suitably estimated for the limiting case $0<\delta=d-1$, and consequently
the corresponding KAM theorems are established with applications to quantum Duffing oscillator, derivative nonlinear Schr$\ddot{\mbox{o}}$dinger and Benjamin-Ono equations. Also see Zhang-Gao-Yuan \cite{Zhang} for reversible derivative nonlinear Schr$\ddot{\mbox{o}}$dinger equations.
For the case $\delta>d-1$, both Kuksin's Lemma and Liu-Yuan's estimate are invalid, and thus there is no general KAM theorem in which the perturbation only satisfies smallness condition.

However, this does not mean that nothing is known for partial differential equations with their nonlinearity containing higher order space derivative. Actually, a great progress has been made for quasi-linear and even fully nonlinear partial differential equations recently. See Baldi-Berti-Montalto \cite{Baldi2,Baldi1,Baldi3} for KdV and mKdV equations, Berti-Montalto \cite{Berti5} for water wave equations, Feola-Procesi \cite{Feola} for reversible derivative nonlinear Schr$\ddot{\mbox{o}}$dinger equations, and Montalto \cite{Montalto} for Kirchoff equation.
 The idea of the proof is to use pseudo-differential calculus in order to conjugate the original system to a system with a smoothing perturbation and then to apply KAM theory.
Also see Bambusi \cite{Bambusi1,Bambusi2}
for the reducibility of Schr\"{o}dinger operators.

Moreover, derivative nonlinear wave equations were studied by Berti-Biasco-Procesi \cite{Berti2,Berti1}. The idea is to prove
first order asymptotic expansions of the perturbed normal frequencies by exploiting the quasi-T$\ddot{\mbox{o}}$plitz property and penalize high-momentum terms by introducing weighted norms.

On the other hand, both Kuksin's Lemma and Liu-Yuan's estimate are only valid for scalar homological equations, which requires the normal frequencies to be simple, that is, $\Omega_j^\sharp=1$. This implies the range of applications of the
previous KAM theorems for unbounded perturbations pertain to those
PDEs with simple frequencies. This precludes the derivative
nonlinear Schr$\ddot{\mbox{o}}$dinger equation (\ref{1.1}) with periodic
boundary conditions, since the multiplicity $\Omega_j^\sharp=2$.

For \eqref{1.1} with its nonlinearity being gauge invariant and not containing $x$ explicitly, see Liu-Yuan \cite{L-Y}. The above difficulty was avoided by using momentum conservation, which guarantees the indices of the monomials
\begin{equation*}\label{18.3.28.3}e^{\mathbf{i}
k_1x_1+\cdots+\mathbf{i}k_nx_n}y_1^{m_1}\cdots y_n^{m_n}\prod_{j\in\bar{Z}\setminus J}
z_j^{l_j}\bar{z}_j^{\bar{l}_j}\end{equation*}
satisfying
\begin{equation}\label{18.3.28.2}\sum_{1\leq b\leq n}k_bj_b
+\sum_{j\in\bar{Z}\setminus J}(l_j-\bar{l}_j)j=0.\end{equation}
  Take the most difficult terms in the KAM iteration scheme: $e^{\mathbf{i}k\cdot x}z_i\bar{z}_j,k\in\mathbb{Z}^n,i,j\in\bar{\mathbb{Z}}
 \setminus J$, where $k\cdot x=k_1x_1+\cdots+k_nx_n$.
Following Bourgain's observation in \cite{Bourgain3},
the restriction \eqref{18.3.28.2} means that $|i|+|j|$ is controlled by $|k|$ unless $i=j$. Hence, for a fixed $k$, all the nearly resonant terms except $e^{\mathbf{i}k\cdot x}z_j\bar{z}_j$ can be eliminated. As a result, only $e^{\mathbf{i}k\cdot x}z_j\bar{z}_j$ are left as normal form terms. The homological equations are then scalar, and the estimate in \cite{L-Y2} for small-denominator equation with large variable coefficients still works.

In the present paper, the nonlinearity contains the space variables $x$ explicitly so that \eqref{18.3.28.2} is not true, and thus  $e^{\mathbf{i}k\cdot x}z_{-j}\bar{z}_{j}$ are difficult to handle. Luckily, a key observation for \eqref{1.1} provides a chance to solve this problem.
After introducing action-angle coordinates to Birkhoff normal form of order four and choosing parameters properly, the normal frequencies $\Omega_j$ take the form (see \eqref{17.10.19.2})
\begin{equation}\label{18.3.28.4}\Omega_j=j^2+cj,\quad j\in\bar{\mathbb{Z}}\setminus J,\end{equation}
where
\begin{equation}\label{18.4.1.1}
c=\frac{2\sum_{b=1}^n\xi_b}{2n-1}.
\end{equation}
This formally indicates that $\Omega_{-j}$ and $\Omega_j$ do not coincide, and even $|\Omega_{-j}-\Omega_j|\rightarrow\infty$ as $j\rightarrow\infty$. Also see \cite{Liu}
for this observation.

 However, the problem is more complicated than it appears because of these two characters of $c$: first, in view of \eqref{18.4.1.1} \eqref{18.3.31.1} \eqref{18.4.1.2},
 \begin{equation}c\approx\epsilon^{\frac67},\end{equation}
 which indicates that $c$ is rather small;
 second, $c$ depends on the parameters with Lipschitz semi-norm
 \begin{equation}|c|^{lip}=\frac{2n}{2n-1},\end{equation}
 which is not small. In the following, for these two characters respectively, we will introduce the corresponding difficulties and the methods to overcome them.

In order to eliminate $e^{\mathbf{i}k\cdot x}z_{-j}\bar{z}_{j}$, $k\in\mathbb{Z}^n,\pm j\in\bar{\mathbb{Z}}\setminus J$, the corresponding small-divisors are
\begin{equation}\label{18.4.1.3}\langle k,\omega\rangle+\Omega_{-j}-\Omega_j.\end{equation}
By the usual method of measure estimate, roughly determined by
\begin{equation}|\Omega_{-j}-
\Omega_j|<|k||\omega|_{\mathcal{O}},\end{equation}
for a fixed $k$, the number of small-divisors is
\begin{equation}\frac{|k||\omega|_{\mathcal{O}}}{2c},
\end{equation}
which is out of control for $c$ too small. Thus, an important fact in Theorem \ref{thm7.12.1} is that,
  $|\Omega_{-j}-\Omega_j|$ is not required as ``frequency asymptotic'' by assumption (A) as usual, but required as ``small-divisor'' by assumption (C).
In this sense, Theorem \ref{thm7.12.1} can not be viewed as a usual unbounded KAM theorem with simple normal frequencies.

In order to solve the above problem, i.e., controlling the number of small-divisors \eqref{18.4.1.3}, we introduce momentum majorant norm (see \eqref{2.1.16}). At the $\nu$-th KAM step, we only eliminate $e^{\mathbf{i}k\cdot x}z_{-j}\bar{z}_{j}$ with lower momentum, roughly that is,
\begin{equation}|\sum_{1\leq b\leq n}k_bj_b
-2j|<|\ln\epsilon_{\nu+1}|.\end{equation}
Therefore, for a fixed $k$, the number of small-divisors is
\begin{equation}\label{18.4.1.4}|k|\max_{1\leq b\leq n}|j_b|+|\ln\epsilon_{\nu+1}|,\end{equation}
which is accepted in the KAM iteration. Consequently, $e^{\mathbf{i}k\cdot x}z_{-j}\bar{z}_{j}$ with lower momentum can be eliminated. On the other hand,  $e^{\mathbf{i}k\cdot x}z_{-j}\bar{z}_{j}$ with higher momentum are put into the perturbation. As a result, only $e^{\mathbf{i}k\cdot x}z_{j}\bar{z}_{j}$ are left as normal forms and the homological equations are  scalar.

Nevertheless, owing to the momentum majorant norm, the estimate for small-denominator equation with large coefficient in \cite{L-Y2} does not work directly.
By properly designing momentum weight and analyticity width, the momentum majorant norm and the sup-norm of a function can be controlled by each other
(see \eqref{17.11.3.2} \eqref{17.11.4.2}). So we obtain a lemma which is Theorem 1.4 in \cite{L-Y2} with the momentum majorant norm estimate instead of the sup-norm estimate (see Lemma \ref{lem17.7.31.1}).

Recall the second character of $c$ mentioned above, that is, $c$ depends on the parameters with  $|c|^{lip}$ not small. Then the Lipschitz semi-norm $|\Omega|^{lip}_{-1}$ is not small, and thus we are not able to get the twist of $\langle k,\omega(\xi)\rangle+\langle l,\Omega(\xi)\rangle$ by the Lipschitz continuity of $\omega$ in both directions as usual. To overcome this problem, we add assumption (C) in the KAM theorem.
Therefore, we must verify: assumption (C) is preserved under KAM iteration; assumption (C) is satisfied for the derivative nonlinear Schr$\ddot{\mbox{o}}$dinger equation \eqref{1.1}.
The former is relatively trivial; while the latter is rather complicated, and some restrictions (see \eqref{18.3.31.2} and additionally $j_1j_2<0$ if $n=2$) on the index set $J$ are necessary. As a counterexample, taking $n=2$, $j_1=-1,j_2=1$, then the small-divisor
\begin{equation}4\omega_1-4\omega_2+\Omega_3-\Omega_{-3}
\equiv0.
\end{equation}
Of course, the above restrictions on $J$ can be more flexible by more discussions.

We now lay out an outline of the present paper:

  In Section 2, we introduce the momentum majorant norm of the vector field and then formulate the KAM theorem. In our theorem,
 the Lipschitz semi-norm of $\omega^{-1}$ is not required as usual; moreover, assumption (C) is added, which means that for any fixed $k\in\mathbb{Z}^n,|l|\leq2$ the small-divisor
 $\langle k,\omega\rangle+\langle l,\Omega\rangle$ is big by itself or has a big twist.

 Section 3 contains the proof of Theorem \ref{1.1}.
We transform the Hamiltonian into a partial Birkhoff normal form up to order four with estimates of momentum majorant norm; then introduce action-angle coordinates for tangential variables and extract parameters by amplitude-frequency modulation; finally Theorem \ref{1.1} is achieved by using the KAM theorem. Therein
  many efforts are paid to verify assumption (C) in the KAM theorem, seeing the proof of Lemma \ref{lem18.3.27.1}, where the condition
  $j_1j_2<0$ for $n=2$ is used in subcase 2.2
   and the condition $2n-1\nmid\sum_{b=1}^nj_b$ is used in subcase 2.3.

    In Section 4-7, the KAM theorem is proved. In Section 4, we derive the homological equations and prove two lemmas (Lemma \ref{lem17.7.31.1}, Lemma \ref{lem17.9.22.1}) for solving them.
   For $k\in\mathbb{Z}^n,i\neq \pm j$, the homological equations are solved in the same way as \cite{L-Y1};
    for $
    k\in\mathbb{Z}^n,i=-j$, the lower momentum terms are eliminated by Lemma \ref{lem17.7.31.1} and the higher momentum terms are left as perturbation. In Section 5, the new Hamiltonian including normal form and perturbation are estimated. In Section 6, by choosing iterative parameters properly, we prove the iterative lemma and the convergence. Therein the transformation and its derivative are estimated by sup-norm as usual.
     In Section 7, the measure of excluded parameters   is estimated by Lemma
     \ref{lem18.4.1.2} for $k\in\mathbb{Z}^n,i=-j$ and Lemma \ref{lem18.4.1.3} for the others. Therein we design $\alpha_{2,\nu}\rightarrow0$ because of \eqref{18.4.1.4}$\rightarrow\infty$ as $\nu\rightarrow\infty$.

  Section 8 contains several lemmas: Lemma \ref{lem18.4.1.1} provides three elementary inequalities being frequently used in this paper; Lemma \ref{lem18.3.19.1} shows that the $|\cdot|_{s,\tau+1}$ norm of a function is controlled by the momentum majorant norm;
   Lemma \ref{lem17.9.7.1} establishes a bridge of the momentum majorant norm between a vector field and its elements; Lemma \ref{lem17.12.22.1}
    is an estimate for the momentum majorant norm of
the commutator of two vector fields;
   Lemma \ref{lem17.12.23.1} is an estimate for the momentum majorant norm of the transformed Hamiltonian vector field.

Finally we remark that the higher order nonlinearity $f_{\geq4}(x,u,\bar{u})$
only contributes to perturbation. Actually, by using the same KAM theorem, Cantor families of quasi-periodic solutions can be obtained for \eqref{1.2} with more general perturbations of the form $\mathbf{i}\frac{d}{dx}\frac{\partial K}{\partial{\bar{u}}}$, where $K$ is a real analytic Hamiltonian with $\frac{\partial K}{\partial{\bar{u}}}$ being bounded and some other conditions. Of course, the above procedure is invalid for \eqref{1.2} with quasi-linear or fully nonlinear perturbations.
We hope to decrease the order of derivatives with the help of the ideas in \cite{Baldi1}, such that the above procedure is valid with some modifications.

\section{A KAM Theorem}

Recall the index set $J=\{j_1<\cdots<j_n\}\subset\bar{\mathbb{Z}}$,
denote $\mathbb{Z}_*:=\bar{\mathbb{Z}}\setminus J$. For $a,p\in\mathbb{R}$, we define the Hilbert space $\ell^{a,p}_J$ of all complex sequences $z=(z_j)_{j\in\mathbb{Z}_*}$ with $$||z||_{a,p}^2=\sum_{j\in\mathbb{Z}_*}e^{2a|j|}|j|^{2p}|z_j|^2<\infty.$$
We consider the direct product
\begin{equation}\label{17.9.18.11}\mathcal{P}^{a,p}:=\mathbb{C}^n\times\mathbb{C}^n\times
\ell^{a,p}_J\times\ell^{a,p}_J\end{equation}
 endowed with $(s,r)$-weighted norm
\begin{equation}\label{17.9.18.1}v=(x,y,z,\bar{z})\in\mathcal{P}^{a,p},\quad ||v||_{s,r,p}=\frac{|x|}{s}+\frac{|y|_1}{r^2}+
\frac{||z||_{a,p}}{r}+
\frac{||\bar{z}||_{a,p}}{r},\end{equation}
where $0<s,r<1$, and $|x|:=\max_{1\leq h\leq n}|x_h|$, $|y|_1:=\sum_{h=1}^n|y_h|$. In the whole of this paper the parameter $a$ is fixed and thus we drop it in the notation $||\cdot||_{s,r,p}$. Note that $z$ and $\bar{z}$ are independent variables. As phase space, we consider the toroidal domain
\begin{equation}\label{17.9.19.1}D(s,r):=\mathbb{T}^n_s\times D(r):=\mathbb{T}^n_s\times B_{r^2}\times B_r\times B_r\subset\mathcal{P}^{a,p},\end{equation}
where
$\mathbb{T}^n_s:=\{x\in\mathbb{C}^n:\mbox{Re}x
\in\mathbb{T}^n:=\mathbb{R}^n/2\pi\mathbb{Z}^n,
\max_{1\leq h\leq n }|\mbox{Im}\ x_h|<s\}, B_{r^2}:=\{y\in\mathbb{C}^n:
|y|_1<r^2\}$
and $B_r\subset\ell_J^{a,p}$ is the open ball of radius $r$ centered at zero.

For $q\in\mathbb{R}$, we consider vector fields $X:D(s,r)\rightarrow\mathcal{P}^{a,q}$ of the form
\begin{equation}\label{2.1.2}X(v)=(X^{(\tilde{x})}(v),X^{(\tilde{y})}(v),
X^{(\tilde{z})}(v),X^{(\bar{{\tilde z}})}(v))\in \mathcal{P}^{a,q},\end{equation}
where $v\in D(s,r)$ and $X^{(\tilde{x})}(v), X^{(\tilde{y})}(v)\in\mathbb{C}^n$, $X^{(\tilde{z})}(v),X^{(\bar{{\tilde z}})}(v)\in\ell_J^{a,q}$. Denote
\begin{equation}V:=\{\tilde{x}_1,\cdots,\tilde{x}_n,\tilde{y}_1,\cdots, \tilde{y}_n,
\cdots,\tilde{z}_j,\cdots,\bar{{\tilde z}}_j,\cdots\},\quad j\in\mathbb{Z}_*.\end{equation}
We can write $X(v)=(X^{\mathbf{v}}(v))_{\mathbf{v}\in V}$, where each component is a formal scalar power series
\begin{equation}\label{17.7.21.17}X^{(\mathbf{v})}(v)=\sum_{(k,i,\alpha,\beta)\in\mathbb{I}}
X^{(\mathbf{v})}_{k,i,\alpha,\beta}e^{\mathbf{i}k\cdot x}y^{i}z^{\alpha}\bar{z}^{\beta}\end{equation}
with coefficients $X^{(\mathbf{v})}_{k,i,\alpha,\beta}\in\mathbb{C}$ and multi-indices in
\begin{equation}\label{2.1.6}\mathbb{I}:=\mathbb{Z}^n
\times\mathbb{N}^n\times
\mathbb{N}^{\mathbb{Z}_*}\times\mathbb{N}^{\mathbb{Z}_*},\end{equation}
where $\mathbb{N}^{\mathbb{Z}_*}:=\{\alpha=(\alpha_j)_{j\in\mathbb{Z}_*}\in
\mathbb{N}^{\mathbb{Z}_*}\mbox{with}\ |\alpha|=\sum_{j\in\mathbb{Z}_*}\alpha_j<\infty\}$. In \eqref{2.1.6}, we use the standard multi-indices notation $z^{\alpha}\bar{z}^{\beta}:=\prod_{j\in\mathbb{Z}_*}z_j^{\alpha_j}
\bar{z}_j^{\beta_j}$. The formal vector field $X$ is absolutely convergent in $\mathcal{P}^{a,q}$ (with norm \eqref{17.9.18.1} for $q$ instead of $p$) at $v\in D(s,r)$ if every component $X^{(\mathbf{v})}(v),\mathbf{v}\in V$ is absolutely convergent and $||(X^{(\mathbf{v})}(v))_{\mathbf{v}\in V}||_{s,r,q}<+\infty$.

We also use the differential geometry notation
\begin{equation}\label{17.9.18.2}X(v)=\sum_{\mathbf{v}\in
V}X^{(\mathbf{v})}\partial_{\mathbf{v}}=\sum_{\mathbf{v}\in V}\sum_{(k,i,\alpha,\beta)\in\mathbb{I}}X^{(\mathbf{v})}_{k,i,\alpha,\beta}
e^{\mathbf{i}k\cdot x}
y^iz^{\alpha}\bar{z}^{\beta}\partial_{\mathbf{v}}.\end{equation}
For a scalar monomial $e^{\mathbf{i}k\cdot x}y^iz^{\alpha}\bar{z}^{\beta}$, we define its momentum as
\begin{equation}\label{2.1.14}\pi(k,\alpha,\beta)=\sum_{b=1}^nk_bj_b+
\sum_{j\in\mathbb{Z}_*}(\alpha_j-\beta_j)j,\end{equation}
 and for a vector field monomial $e^{\mathbf{i}k\cdot x}
y^iz^{\alpha}\bar{z}^{\beta}\partial_{\mathbf{v}}$,
we define its momentum as
\begin{equation}\label{2.1.13}\pi(k,\alpha,\beta;\mathbf{v}):=\begin{cases}
\pi(k,\alpha,\beta),&\mbox{if}\ \ \mathbf{v}\in\{\tilde{x}_1,\cdots,\tilde{x}_n,
\tilde{y}_1,\cdots, \tilde{y}_n\},\\ \pi(k,\alpha,\beta)- j,&
\mbox{if}\ \ \mathbf{v}=\tilde{z}_j,\\
\pi(k,\alpha,\beta)+j, & \mbox{if}\ \ \mathbf{v}=\bar{\tilde{z}}_j.\end{cases}\end{equation}
We say that a vector field $X$ satisfies momentum conservation if and only if it is a linear combination of monomial vector fields with zero momentum.

Similarly to \cite{Berti1}, for a formal vector field $X$ in \eqref{17.9.18.2}, we define its momentum majorant norm on $D(s,r)$ as
\begin{equation}\label{2.1.16}||X||_{s,r,q,\mathbf{a}}:
=\sup_{(y,z,\bar{z})\in D(r)}||(\sum_{k,i,\alpha,\beta}e^{\mathbf{a}|\pi(k,\alpha,\beta;v)|}
|X_{k,i,\alpha,\beta}^{(\mathbf{v})}|e^{|k|s}|y|^i|z^{\alpha}||
\bar{z}^{\beta}|)_{\mathbf{v}\in V}||_{s,r,q},\end{equation}
where $\mathbf{a}\geq0$ and $|k|:=|k_1|+|k_2|+\cdots+|k_n|$.
Furthermore, if $X$ depends on parameters $\xi\in\mathcal{O}\subset\mathbb{R}^n$, we define the $\lambda$-Lipschitz (momentum majorant) norm ($\lambda\geq0$):
%
%
\begin{eqnarray}\nonumber||X||^{\lambda}_{s,r,q,\mathbf{a};
\mathcal{O}}&:=&||X||_{s,r,q,\mathbf{a};\mathcal{O}}+
\lambda||X||_{s,r,q,\mathbf{a}
;\mathcal{O}}^{lip}\\
\label{17.9.19.2}&:=&\sup_{\xi\in\mathcal{O}}||X(\xi)||
_{s,r,q,\mathbf{a}}+\lambda\sup_{\xi,\zeta\in\mathcal{O}
\atop{\xi\neq\zeta}}\frac{||\Delta_{\xi\zeta}X||_{s,r,q,\mathbf{a}}}
{|\xi-\zeta|},\end{eqnarray}
where $\Delta_{\xi\zeta}X=X(\cdot;\xi)-X(\cdot;\zeta)$.

Similarly, we define the $\lambda$-Lipschitz sup-norm:
\begin{eqnarray}\nonumber||X||^{\lambda}_{s,r,q;D(s,r)\times
\mathcal{O}}&:=&||X||_{s,r,q;D(s,r)\times\mathcal{O}}+
\lambda||X||_{s,r,q
;D(s,r)\times\mathcal{O}}^{lip}\\
\label{17.12.19.1}&:=&\sup_{(v;\xi)\in D(s,r)\times\mathcal{O}}||X(v;\xi)||
_{s,r,q}+\lambda\sup_{\xi,\zeta\in\mathcal{O}
\atop{\xi\neq\zeta}}\sup_{D(s,r)}\frac{||\Delta_{\xi\zeta}X||_{s,r,q}}
{|\xi-\zeta|}.\end{eqnarray}
Obviously, we have
\begin{equation}\label{17.11.8.2}||X||_{s,r,q;D(s,r)}\leq
||X||_{s,r,q,\mathbf{a}},\end{equation}
\begin{equation}||X||^{\lambda}_{s,r,q;D(s,r)\times
\mathcal{O}}\leq||X||^{\lambda}_{s,r,q,\mathbf{a};
\mathcal{O}}.\end{equation}

Now consider small perturbations $H=N+P$ of an infinite dimensional Hamiltonian in the parameter dependent normal form
\begin{equation}\label{2.1}N=\sum_{1\leq b\leq n}\sigma_{j_b}\omega_b(\xi)y_b+\sum_{j\in\mathbb{Z}_*}\sigma_{j}
\Omega_j(\xi)z_j\bar{z}_j\end{equation}
defined on the phase space $\mathcal{P}^{a,p}$ ($a\geq0,p\geq0$)
 with the symplectic structure
 \begin{equation}\label{7.13.4}\sum_{1\leq b\leq n}\sigma_{j_b}dx_b\wedge dy_b-\mathbf{i}\sum_{j\in\mathbb{Z}_*}\sigma_jdz_j\wedge d\bar{z}_j,\end{equation}
 where $\sigma_j=1$ for $j>0$ and $\sigma_j=-1$ for $j<0$. The tangential frequencies $\omega:=(\omega_1,\cdots,\omega_n)$ and the normal frequencies $\Omega:=(\Omega_j)_{j\in\mathbb{Z}_*}$ are real vectors depending on real parameters $\xi\in\mathcal{O}\subset\mathbb{R}^n$, $\mathcal{O}$ a closed bounded
 set of positive Lebesgue measure, and roughly
  $$\Omega_j(\xi)=j^2+\cdots.$$

  The perturbation term $P$ is real analytic in the space coordinates and Lipschitz in the parameters. Moreover, for each $\xi\in\mathcal{O}$ its Hamiltonan
  vector field
  \begin{equation}\label{7.13.2} X_P=((\sigma_{j_b}P_{y_b})_{1\leq b\leq n}, -(\sigma_{j_b}P_{x_b})_{1\leq b\leq n},-\mathbf{i}(\sigma_jP_{\bar{z}_j})_{j\in\mathbb{Z}_*},\mathbf{i}(\sigma_jP_{z_j})_{j\in\mathbb{Z}_*})^T\end{equation}
  defines in the neighbourhood of  $\mathcal{T}_0=\mathbb{T}^n\times\{y=0\}\times\{z=0\}\times\{\bar{z}=0\}$, that is $D(s,r)$ in \eqref{17.9.19.1}, a real analytic map
   \begin{equation}\label{7.13.1}X_P:\mathcal{P}^{a,p}\rightarrow
   \mathcal{P}^{a,q}.\end{equation}

Similarly to the Lipschitz-norm of the vector filed in \eqref{17.9.19.2},
 the Lipschitz semi-norms of the frequencies $\omega$ and $\Omega$ are defined as

 \begin{equation}|\omega|_{\mathcal{O}}^{lip}=\sup_{\xi,\zeta\in
 \mathcal{O}\atop{\xi\neq\zeta}}\frac{|\Delta_{\xi\zeta}\omega|}{|\xi-\zeta|},
\quad |\Omega|_{-\delta,\mathcal{O}}^{lip}=\sup_{\xi,\zeta\in\mathcal{O}
\atop{\xi\neq\zeta}}
\sup_{j\in\mathbb{Z}_*}|j|^{-\delta}\frac{|\Delta_{\xi\zeta}\Omega_j|}
{|\xi-\zeta|}\end{equation}
for any real number $\delta$.

 \begin{thm} \label{thm7.12.1}Suppose the normal form $N$ in \eqref{2.1} described above satisfies the following
assumptions:
\begin{itemize}
\item[(A)] There exists a constant $m>0$ such that
\begin{equation}|\Omega_i-\Omega_j|\geq m|i^2-j^2|,\end{equation}
for all $i,j\in\mathbb{Z}_*\cup\{0\}$ uniformly on $\mathcal{O}$. Here $\Omega_0=0$;
\item[(B)]The map $\xi\mapsto\omega(\xi)$ between $\mathcal{O}$ and its image is Lipschitz continuous, i.e. there exist a positive constant $M_1$ such that $|\omega|_{\mathcal{O}}^{lip}\leq M_1$;
    there exists $\delta\leq1$ such that the functions $\xi\mapsto\frac{\Omega_j(\xi)}{j^{\delta}}$ are uniformly Lipschitz
on $\mathcal{O}$ for $j\in\mathbb{Z}_*$, i.e. there exist a positive constant $M_2$ such that $|\Omega|_{-\delta,\mathcal{O}}^{lip}\leq M_2$;

\item[(C)]
There exists a constant $M_3>0$ such that, for every $k\in\mathbb{Z}^n$ and $l\in\mathbb{Z}^{\mathbb{Z}_*}$ with $|l|:=\sum_{j\in\mathbb{Z}_*}|l_j|\leq2$, the small divisor
$D_{kl}(\xi):=\langle k,\omega(\xi)\rangle+\langle l,\Omega(\xi)\rangle$
satisfies
\begin{equation}\label{18.3.18.3}\inf_{\xi\in\mathcal{O}}|D_{kl}(\xi)|
+\inf_{\xi-\zeta// v_{kl}}\frac{|\Delta_{\xi\zeta}
    D_{kl}|}{|\xi-\zeta|}\geq M_{3}\max\{|k|,\sum_{j\in\mathbb{Z}_*}|jl_j|\},
\end{equation}
     where $v_{kl}$ is a unit vector in $\mathbb{R}^n$ depending on $k$ and $l$, and the notation ``$\xi-\zeta// v_{kl}$'' means ``for all $\xi,\zeta\in\mathcal{O}$ with $\xi-\zeta$ parallelling to $v_{kl}$''.
\end{itemize}
Set $M=M_1+M_2$. Then for every $\beta>0$, there exists a positive constant $\gamma$, depending only on $n,m$, the frequencies $\omega$, $\Omega$,
$s>0$ and $\beta$, such that for every perturbation term $P$ described above with
\begin{equation}p-q\leq1\end{equation}
and
\begin{equation}\label{17.7.21.20}\epsilon:=||X_P||_{s,r,q,\mathbf{a};
\mathcal{O}}+\frac{\alpha}{M}||X_P||_{s,r,q,\mathbf{a};
\mathcal{O}}^{lip}\leq
(\alpha\gamma)^{1+\beta}\end{equation}
for some $r>0$ and $0<\alpha<1$, there exist:
\begin{itemize}
\item[(1)] a Cantor set $\mathcal{O}_{\alpha}\subset\mathcal{O}$ with
\begin{equation}\label{17.7.21.21}|\mathcal{O}\setminus\mathcal{O}_{\alpha}|\leq c_1\rho^{n-1}\alpha,\end{equation}
where $|\cdot|$ denotes the Lebesgue measure, $\rho:=\mbox{diam}\mathcal{O}$ represents the diameter of $\mathcal{O}$, and $c_1>0$ is a constant depends on $n,J,\omega,\Omega$;
\item[(2)] a Lipschitz family of smooth torus embeddings $\Phi:\mathbb{T}^n\times\mathcal{O}_{\alpha}\rightarrow
    \mathcal{P}^{a,p}$ satisfying: for every non-negative integer multi-index $k=(k_1,\cdots,k_n),$
    \begin{equation}\label{17.7.21.22}
    ||\partial_{x}^k(\Phi-\Phi_0)||_{s,r,p;\mathbb{T}^n
    \times\mathcal{O}_{\alpha}}+
    \frac{\alpha}{M}||\partial_x^k(\Phi-\Phi_0)||_{s,r,p;
    \mathbb{T}^n\times\mathcal{O}_{\alpha}}^{lip}\leq
     c_2\epsilon^{\frac{1}{1+\beta}}/\alpha,\end{equation}
     where $\partial_x^k:=\frac{\partial^{|k|}}{\partial x_1^{k_1}\cdots\partial x_n^{k_n}}$,
     $$\Phi_0:\mathbb{T}^n\times\mathcal{O}\rightarrow\mathcal{T}_0,\quad (x,\xi)\mapsto(x,0,0,0)$$
     is the trivial embedding for each $\xi$, and $c_2$ is a positive constant which depends on $k$ and the same parameter as $\gamma$;
\item[(3)] a Lipschitz map $\phi:\mathcal{O}_{\alpha}\rightarrow\mathbb{R}^n$ with
\begin{equation}\label{17.7.21.23}|\phi-\omega|_{\mathcal{O}_{\alpha}}+\frac{\alpha}{M}|\phi-\omega|_{\mathcal{O}_{\alpha}}^{lip}\leq c_3\epsilon,\end{equation}
where $c_3$ is a positive constant which depends on the same parameter as $\gamma$,
\end{itemize}
such that for each $\xi\in\mathcal{O}_{\alpha}$ the map $\Phi$ restricted to $\mathbb{T}^n\times\{\xi\}$ is a smooth embedding of a rotational torus with frequencies $\phi(\xi)$ for the perturbed Hamiltonian $H$ at $\xi$. In other words,
$$t\mapsto\Phi(\theta+\phi(\xi),\xi),\quad t\in\mathbb{R}$$
is a smooth quasi-periodic solution for the Hamiltonian $H$ evaluated at $\xi$ for every $\theta\in\mathbb{T}^n$ and $\xi\in\mathcal{O}_{\alpha}$.
\end{thm}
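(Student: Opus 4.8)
The plan is to follow the standard KAM scheme à la Kuksin--Pöschel, carried out with the momentum majorant norm introduced in \eqref{2.1.16}, and adapted to the non-standard twist hypothesis (C). At each step $\nu$ we write $H_\nu = N_\nu + P_\nu$ with $\|X_{P_\nu}\|^{\alpha/M}_{s_\nu,r_\nu,q,\mathbf{a}_\nu;\mathcal{O}_\nu}\le\epsilon_\nu$, and seek a symplectic change of variables $\Phi_\nu = X^1_{F_\nu}$, the time-one map of a Hamiltonian flow, so that $H_{\nu+1} = H_\nu\circ\Phi_\nu$ has a perturbation of size $\epsilon_{\nu+1}\sim\epsilon_\nu^{3/2}$ (or whatever exponent the $\beta$-bookkeeping forces). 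The key structural point, already flagged in the introduction, is the way the normal-form part of the homological equation is truncated: for monomials $e^{\mathbf{i}k\cdot x}z_i\bar z_j$ with $i\ne\pm j$ one solves the equation exactly as in Liu--Yuan \cite{L-Y1}; for $i=-j$ one eliminates only the \emph{low-momentum} terms, namely those with $|\pi(k,\alpha,\beta;\mathbf{v})|\le|\ln\epsilon_{\nu+1}|$, using Lemma \ref{lem17.7.31.1} (the variant of the large-coefficient small-denominator estimate written in the momentum majorant norm), and dumps the high-momentum terms back into $P_{\nu+1}$; the term $i=j$ is kept as a correction $\sum_j\sigma_j\hat\Omega_j z_j\bar z_j$ to the normal form. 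Because only $e^{\mathbf{i}k\cdot x}z_j\bar z_j$ survive as normal-form terms, the homological equations are scalar throughout, which is exactly what makes the unbounded estimates applicable.

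Next I would run the iterative lemma. The ingredients are: (i) solvability of the homological equations under the small-divisor conditions, controlled on the current parameter set $\mathcal{O}_\nu$ — here assumption (A) handles $D_{k,e_i-e_j}$ away from resonance, and assumption (C) is precisely what lets us handle $D_{kl}$ for $|l|\le 2$ even though $|\Omega|^{lip}_{-1}$ is not small, replacing the usual two-sided Lipschitz twist argument by the ``big-by-itself or big-twist'' alternative; (ii) the new-error estimates of Section 5, where $P_{\nu+1}$ consists of the Poisson-bracket tail $\int_0^1\{(1-t)(N_{\nu+1}-N_\nu)+tP_\nu,F_\nu\}\circ X^t_{F_\nu}\,dt$ plus the discarded high-momentum part, both estimated via Lemma \ref{lem17.12.22.1} and Lemma \ref{lem17.12.23.1}; (iii) the parameter excision $\mathcal{O}_{\nu+1} = \mathcal{O}_\nu\setminus\bigcup_{k,l}R^{\nu+1}_{kl}$, where $R^{\nu+1}_{kl}$ removes $\xi$ with $|D_{kl}(\xi)|<\alpha_\nu\cdot(\text{stuff})/|k|^\tau$; the Lipschitz-in-$\mathcal{O}_\nu$ control of the corrected frequencies $\omega_\nu,\Omega_\nu$ (again using (C) to keep twist under iteration, with the unit vector $v_{kl}$ tracked from step to step) makes each $R^{\nu+1}_{kl}$ of measure $\lesssim\rho^{n-1}\alpha_\nu/|k|^{\tau-?}$. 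I would choose, following the introduction's remark, a sequence $\alpha_{2,\nu}\to 0$ in the measure estimate to absorb the growth \eqref{18.4.1.4} of the number of resonant $j$ for fixed $k$, and design the momentum weights $\mathbf{a}_\nu$ and analyticity widths $s_\nu$ so that \eqref{17.11.3.2}, \eqref{17.11.4.2} let the momentum majorant norm and the sup-norm control each other at every step. Then a standard telescoping argument gives $\Phi = \lim_\nu \Phi_0\circ\Phi_1\circ\cdots\circ\Phi_\nu$ converging on $D(s/2,0)$ in sup-norm (and its $x$-derivatives, since the $\partial_x^k$-bounds are propagated as in \cite{Poschel3}), with $\mathcal{O}_\alpha = \bigcap_\nu\mathcal{O}_\nu$ of the asserted measure \eqref{17.7.21.21}, $\phi = \lim_\nu\omega_\nu$ satisfying \eqref{17.7.21.23}, and the estimates \eqref{17.7.21.22} following from $\sum_\nu\epsilon_\nu^{1/(1+\beta)}$-type sums.

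The smoothness of $\Phi$ restricted to each torus (as opposed to mere analyticity in a shrinking strip that closes up) is obtained, as in \cite{Poschel3}, by noting that the composition of the $\Phi_\nu$ and all its $x$-derivatives converge uniformly on $\mathbb{T}^n\times\mathcal{O}_\alpha$; the bound \eqref{17.7.21.22} for arbitrary multi-index $k$ is exactly the quantitative form of this. Finally, on $\mathcal{O}_\alpha$ the limit Hamiltonian has the pure normal form $N_\infty$, so $t\mapsto\Phi(\theta+t\phi(\xi),\xi)$ solves $H$, which is the conclusion.

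I expect the main obstacle to be twofold and concentrated in the homological step and the measure estimate. First, verifying that assumption (C) is \emph{preserved} along the iteration: the corrections $\hat\omega_\nu,\hat\Omega_\nu$ to the frequencies are only small in size, not in Lipschitz semi-norm a priori, so one must show the corrected $D_{kl}$ still satisfies the ``size-or-twist'' lower bound \eqref{18.3.18.3} with a constant $M_3$ that does not deteriorate to zero — this is where the fact that the perturbation corrections have small \emph{Lipschitz} norm (the $\alpha/M$-weighted bound in \eqref{17.7.21.20}) is essential, and where one must be careful that the direction $v_{kl}$ can be kept fixed because the corrections do not rotate it. Second, and relatedly, the small-denominator equation with large variable coefficient $c z_j\bar z_j$ must be solved in the momentum majorant norm rather than the sup-norm; Lemma \ref{lem17.7.31.1} is asserted to do this, but feeding its output back into the majorant-norm bookkeeping — so that the discarded high-momentum remainder is genuinely of order $\epsilon_{\nu+1}$ after the $e^{\mathbf{a}_\nu|\pi|}$ weight eats the factor $e^{-\mathbf{a}_\nu|\ln\epsilon_{\nu+1}|}$ from the momentum cutoff — requires a delicate matching of $\mathbf{a}_\nu$, $s_\nu-s_{\nu+1}$, and $\epsilon_\nu$, and is the technical heart of the argument.
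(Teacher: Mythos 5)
Your proposal matches the paper's proof essentially point for point: the Newton iteration with generating Hamiltonian $F_\nu$, the scalar homological equations solved via Lemma \ref{lem17.7.31.1} and Lemma \ref{lem17.9.22.1} with the momentum cutoff $|j|\le\Pi_\nu$ for the $i=-j$ case (high-momentum remainder pushed into $P_{\nu+1}$ and made small by the $e^{-2(\mathbf{a}_\nu-\mathbf{a}_{\nu+1})\Pi_\nu}$ factor), the preservation of assumption (C) with a fixed direction $v_{kl}$ and a slowly eroding constant $M_{3,\nu}$, the shrinking excision width $\alpha_{2,\nu}\to 0$ in the measure estimate, and the telescoping convergence with $\partial_x^k$-estimates following P\"oschel. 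This is the same route the paper takes in Sections 4--7, and you have correctly identified where the technical weight falls.
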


%
%
%
%

\section{Proof of Theorem 1.1}
In the first subsection, we write the derivative nonlinear Schr$\ddot{\mbox{o}}$dinger equation \eqref{1.1} in Hamiltonian form of infinitely coordinates, and then transform it into a partial Birkhoff normal form up to order four. In the second subsection, we prove Theorem \ref{1.1} by using Theorem \ref{thm7.12.1}.

\subsection{Birkhoff Normal Form}

We introduce for any $a\geq0$ and $\tilde{p}>3/2$ the phase space
$$\mathcal{H}_{0}^{a,\tilde{p}}=\{u\in L^2(\mathbb{T}):\hat{u}(0)=0,||u||_{a,\tilde{p}}^2
=\sum_{j\in\bar{\mathbb{Z}}}|\hat{u}(j)|^2|j|^{2\tilde{p}}e^{2a|j|}<\infty\}$$
of complex valued functions on $\mathbb{T}$, where
$$\hat{u}(j)=\int_{0}^{2\pi}u(x)\phi_{-j}(x)dx.$$
To write \eqref{1.3} in infinitely many coordinates, we make the ansatz
\begin{equation}\label{3.1}u(t,x)=\sum_{j\in\bar{\mathbb{Z}}}
\gamma_jq_j(t)\phi_j(x),\end{equation}
where $\gamma_j=\sqrt{|j|}$. The coordinates are taken from the Hilbert space $\bar{\ell}^{a,p}$ of all complex-valued
sequences $q=(q_j)_{j\in\bar{\mathbb{Z}}}$ with finite norm
$$||q||_{a,p}^2=\sum_{j\in\bar{\mathbb{Z}}}|q_j|^2
|j|^{2p}e^{2a|j|}<\infty,$$
where $p=\tilde{p}+\frac12$. We remark that $\bar{\ell}^{a,p}$ is $\ell_J^{a,p}$ with $J=\emptyset$. In the following, for convenience the notation $\in\bar{\mathbb{Z}}$ is abbreviated as ``$\neq 0$'' or omitted. Now \eqref{1.3} can be rewritten as
\begin{equation}\label{3.2}\dot{q}_j=-\mathbf{i}\sigma_j\frac{\partial H}{\partial \bar{q}_j},\quad
\sigma_j=\begin{cases}1,\ &j\geq1\\ -1,\ &j\leq-1\end{cases}\end{equation}
with the Hamiltonian
\begin{equation}\label{3.3}H=\Lambda+G+K,\end{equation}
where \begin{equation}\label{3.4}\Lambda=
\sum_{j\neq0}\sigma_jj^2|q_j|^2,\end{equation}
\begin{equation}\label{3.5}G=\frac{1}{4\pi}
\sum_{j,k,l,m\neq0\atop{j-k+l-m=0}}
\gamma_j\gamma_k\gamma_l\gamma_mq_j
\bar{q}_kq_l\bar{q}_m,\end{equation}
\begin{equation}K=\int_{\mathbb{T}^n}
F_{\geq5}(x,\sum_{j\in\bar{\mathbb{Z}}
}\gamma_jq_j\phi_j,\sum_{j\in\bar{\mathbb{Z}}
}\gamma_j\bar{q}_j\phi_{-j})dx.\end{equation}
Now we consider the $4$-order term $G$. The normal form part of $G$ is \eqref{3.5} with $j=k$ or $j=m$, that is
\begin{equation}\label{3.7}B=\frac{1}{4\pi}\sum_{j\neq 0}j^2|q_j|^4+\frac{1}{2\pi}\sum_{j,l\atop{j\neq l}}|jl||q_j|^2|q_l|^2.\end{equation}
Fix a positive integer $N$. Define the index set
$$\Delta=\{{(j,k,l,m)}\in\mathbb{Z}^4:j,k,l,m\neq0,j-k+l-m=0,j\neq k,m\},$$
$$\Delta_1=\{{(j,k,l,m)}\in\Delta:\mbox{There are at least 2 compoment in}\{\pm1,\cdots,\pm N\}\}.$$
Split the non normal form part of $G$ into two parts:
\begin{equation}\label{3.8}Q_1=\frac{1}{4\pi}\sum_{(j,k,l,m)\in\Delta_1}\gamma_j\gamma_k\gamma_l
\gamma_mq_j\bar{q}_kq_l\bar{q}_m,\end{equation}
\begin{equation}\label{3.9}Q_2=\frac{1}{4\pi}\sum_{(j,k,l,m)\in\Delta\setminus\Delta_1}
\gamma_j\gamma_k\gamma_l
\gamma_mq_j\bar{q}_kq_l\bar{q}_m.\end{equation}
Then the Hamiltonian can be written as
\begin{equation}\label{3.10}H=\Lambda+B+Q_1+Q_2+K.\end{equation}
In this section, the symplectic structure is
\begin{equation}\label{3.12}-\mi\sum_{j\neq0}\sigma_jdq_j\wedge d\bar{q}_j,\end{equation}
and the corresponding Poisson bracket for two Hamiltonians $H$, $F$ is
\begin{equation}\label{3.13}\{H,F\}=-\mi\sum_{j\neq0}
\sigma_j(\frac{\partial H}{\partial q_j}\frac{\partial F}{\partial\bar{q}_j}-\frac{\partial H}{\partial \bar{q}_j}\frac{\partial F}{\partial q_j}).\end{equation}

For $J=\emptyset$, i.e., in absence of $x,y$-variables, denote the momentum majorant norm  $||\cdot||_{s,r,p-1,\mathbf{a}}$ in \eqref{2.1.16} as
$||\cdot||_{r,p-1,\mathbf{a}}$. From the analyticity of $g(x,z)$ in $x$ and $z$, we can choose $a>0,\mathbf{a}>0,\tilde{r}>0$ such that the vector field $X_K$ is analytic from some neighbourhood of the origin of $\bar{\ell}^{a,p}$ into $\bar{\ell}^{a,p-1}$ with
\begin{equation}\label{3.6}||X_K||_{\tilde{r},p-1,\mathbf{a}}
=O(\tilde{r}^3).\end{equation}
On the other hand, it's easy to see that the functions $B, Q_1, Q_2$ are analytic in $\bar{\ell}^{a,p}$ with real value, and the vector fields
$X_B,$ $X_{Q_1}, X_{Q_2}$ are analytic maps from $\bar{\ell}^{a,p}$ into $\bar{\ell}^{a,p-1}$ with
\begin{equation}\label{3.11}||X_B||_{\tilde{r},p-1,\mathbf{a}},
||X_{Q_1}||_{\tilde{r},p-1,\mathbf{a}},
||X_{Q_2}||_{\tilde{r},p-1,\mathbf{a}}=O(\tilde{r}^2).\end{equation}

In the following lemma, we will search for a symplectic coordinate transformation which is the time $1$-map of the flow of the Hamiltonian vector field $X_F$, then eliminate $Q_1$ in the Hamiltonian and thus get a partial Birkhoff normal form up to order four.
By Lemma \ref{lem17.12.22.1} and Lemma \ref{lem17.12.23.1} with the absence of $x,y$-variables, we get
\begin{equation}||X_{\{B,F\}}||_{\frac{\tilde{r}}{2},p-1,\mathbf{a}}
\leq2^{2n+4}||X_B||_{\tilde{r},p-1,\mathbf{a}}||X_F||_{\tilde{r},p-1,\mathbf{a}},
\end{equation}

\begin{equation}
||X_{B\circ\Phi_F^1}||_{\frac{\tilde{r}}{2},p-1,\mathbf{a}}\leq
\frac{||X_B||_{\tilde{r},p-1,\mathbf{a}}}{1-2^{2n+6}e
||X_F||_{\tilde{r},p,\mathbf{a}}},
\end{equation}
and similar estimates for $Q_1,Q_2,K$. Therefore we obtain the following lemma, which is Lemma 3.2 in \cite{L-Y} with the momentum majorant norm estimates
 instead of the usual sup-norm estimates.
\begin{lem}\label{NFlem} There exists a real analytic symplectic coordinate transformation $\Psi$ defined in a neighborhood
of the origin of $\bar{\ell}^{a,p}$ which transforms the above Hamiltonian $H$ into a partial Birkhoff
normal form up to order four. More precisely,
\begin{equation}\label{3.14}H\circ\Psi=\Lambda+B+Q_2+R\end{equation}
with
\begin{equation}\label{3.15}||X_R||_{\frac{\tilde{r}}{2},p-1,\mathbf{a}}
=O(\tilde{r}^3).\end{equation}
\end{lem}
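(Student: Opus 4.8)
The plan is to construct $\Psi$ as the time-one map $\Phi_F^1$ of the Hamiltonian flow of an auxiliary Hamiltonian $F$ chosen so that the linear flow $\{\Lambda,F\}$ cancels the non-normal quartic term $Q_1$. Concretely, I would first write $F$ explicitly as
\begin{equation}\label{Fdef}F=\frac{1}{4\pi}\sum_{(j,k,l,m)\in\Delta_1}\frac{\gamma_j\gamma_k\gamma_l\gamma_m}{\mathbf{i}(j^2-k^2+l^2-m^2)}q_j\bar{q}_kq_l\bar{q}_m,\end{equation}
so that $\{\Lambda,F\}+Q_1=0$ holds identically; here one uses that on $\Delta_1$ one always has $j^2-k^2+l^2-m^2\neq0$ when $(j,k,l,m)\in\Delta_1$ (this is where the condition that at least two components lie in $\{\pm1,\dots,\pm N\}$ together with $j-k+l-m=0$, $j\neq k,m$ forces a nonzero small divisor, and the divisors are bounded below — indeed $|j^2-k^2+l^2-m^2|\geq c(N)>0$ on $\Delta_1$). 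The crucial quantitative input is that $\Delta_1$ forces $|j|,|k|,|l|,|m|$ to be comparably bounded, hence $F$ is again a genuine (polynomial-type) quartic Hamiltonian with $\|X_F\|_{\tilde r,p-1,\mathbf a}=O(\tilde r^2)$ by the same kind of estimate \eqref{3.11} that controls $X_{Q_1}$, possibly after shrinking $\tilde r$ and $a,\mathbf a$ slightly.

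Next I would run the Lie transform bookkeeping. Writing $H=\Lambda+B+Q_1+Q_2+K$ and expanding $H\circ\Phi_F^1=\sum_{\nu\geq0}\frac{1}{\nu!}\{H,F\}_\nu$ (iterated Poisson brackets), the $\nu=0$ term is $H$, the contribution $\{\Lambda,F\}$ cancels $Q_1$ by the choice \eqref{Fdef}, and everything else lands in the remainder:
\begin{equation}\label{Rdef}R=Q_2+K+\int_0^1\{(1-t)\{\Lambda,F\}+\Lambda_{\text{rest}},F\}\circ\Phi_F^t\,dt,\end{equation}
where schematically $R$ collects $\{B+Q_2+K,F\}$, the higher brackets $\{\{B+Q_2+K,F\},F\}/2!+\cdots$, and the telescoped term from $Q_1$ itself. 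The point is that each such bracket either raises the polynomial order (so $\{B,F\}$, $\{Q_1,F\}$, $\{Q_2,F\}$ are order $\geq6$, hence $O(\tilde r^4)=O(\tilde r^3)$ after the obvious shrink), or already starts at order $\geq5$ (the $K$-terms), so in all cases the vector-field momentum majorant norm on $D(\tilde r/2)$ is $O(\tilde r^3)$. The quantitative estimates needed here are exactly the commutator bound (Lemma \ref{lem17.12.22.1}) and the transformed-vector-field bound (Lemma \ref{lem17.12.23.1}), which are quoted in the excerpt in the specialized form with $x,y$ absent, giving
\begin{equation}\|X_{\{B,F\}}\|_{\tilde r/2,p-1,\mathbf a}\leq 2^{2n+4}\|X_B\|_{\tilde r,p-1,\mathbf a}\|X_F\|_{\tilde r,p-1,\mathbf a},\end{equation}
and similarly for $Q_1,Q_2,K$ in place of $B$; summing the resulting geometric-type series (which converges once $\tilde r$ is small enough that $2^{2n+6}e\|X_F\|_{\tilde r,p,\mathbf a}<1$) yields $\|X_R\|_{\tilde r/2,p-1,\mathbf a}=O(\tilde r^3)$, which is \eqref{3.15}, and also shows $\Phi_F^1$ is a well-defined real analytic symplectomorphism of a neighborhood of the origin into $\bar\ell^{a,p}$, fixing the origin, with $D\Phi_F^1$ close to the identity.

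Finally I would check the structural claims: $\Psi$ is symplectic because it is the time-one flow of a Hamiltonian vector field for the symplectic form \eqref{3.12}; $\Psi$ is real analytic because $F$ is real analytic (real-valued, since the coefficients pair $(j,k,l,m)\leftrightarrow(m,l,k,j)$ symmetrically and the divisor is odd under this, making $F=\bar F$) and the flow of an analytic vector field is analytic; $B$ is unchanged as written in \eqref{3.14} because the normal-form part of $G$ (the terms with $j=k$ or $j=m$) is untouched by the transformation — only $Q_1$ is removed and the leftover $Q_2$ is carried along — so $H\circ\Psi=\Lambda+B+Q_2+R$. I expect the main obstacle to be the careful verification that the small divisors $j^2-k^2+l^2-m^2$ are bounded \emph{away from zero} on $\Delta_1$ and that the associated $F$ still obeys a momentum-majorant-norm bound of the same order as $Q_1$; this is the one place where the finite truncation parameter $N$ and the combinatorics of $\Delta_1$ (at least two indices of small modulus, momentum constraint $j-k+l-m=0$) genuinely enter, and it is what makes $F$ a bona fide perturbation in the $\|\cdot\|_{\tilde r,p-1,\mathbf a}$ norm rather than merely a formal series. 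Everything downstream — the bracket estimates, the convergence of the Lie series, and the bound \eqref{3.15} — is then routine given Lemmas \ref{lem17.12.22.1} and \ref{lem17.12.23.1}, exactly as in Lemma 3.2 of \cite{L-Y} but with the momentum majorant norm replacing the sup-norm throughout.
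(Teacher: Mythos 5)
Your overall strategy matches the paper's: construct $\Psi=\Phi_F^1$ with $F$ chosen so that $\{\Lambda,F\}$ cancels $Q_1$, observe the relevant divisors are bounded away from zero, and then run the Lie-transform bookkeeping with the momentum-majorant-norm versions of the commutator and flow lemmas (Lemma~\ref{lem17.12.22.1}, Lemma~\ref{lem17.12.23.1}); this is exactly the route the paper takes (deferring the details to Lemma~3.2 of \cite{L-Y}). However, you misidentify the ``crucial quantitative input.'' It is \emph{not} true that $\Delta_1$ forces $|j|,|k|,|l|,|m|$ to be comparably bounded: e.g.\ $j,l$ small and $k,m$ large with $k-m=j+l$ is allowed. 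What actually controls $F$ is that on all of $\Delta$ (not just $\Delta_1$), the momentum constraint $j-k+l-m=0$ gives the algebraic identity $j^2-k^2+l^2-m^2=-2(k-j)(k-l)$, and $j\neq k,m$ forces $k\neq j$ and $k\neq l$ (since $k=l$ would give $m=j$), so $|j^2-k^2+l^2-m^2|\geq 2$ uniformly. Hence the coefficients of $F$ are dominated by those of $Q_1$ up to a factor $\tfrac12$ and the bound $\|X_F\|_{\tilde r,p-1,\mathbf a}=O(\tilde r^2)$ follows from \eqref{3.11} directly; the restriction to $\Delta_1$ plays no role in this estimate. Its actual purpose is elsewhere: $\Delta_1$ is the set of monomials with at least two tangential indices, and removing precisely those ensures the leftover $Q_2$ has at least three normal-variable factors, which is what later yields the sharper decay \eqref{18.3.25.3} when passing to action-angle variables. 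Two smaller slips: your formula \eqref{Rdef} both uses undefined notation (``$\Lambda_{\text{rest}}$'') and erroneously places $Q_2$ inside $R$ (though you correct this at the end), and the reality pairing for $\bar F=F$ is $(j,k,l,m)\leftrightarrow(k,j,m,l)$ (swapping barred and unbarred slots), not $(m,l,k,j)$. Also the sign in \eqref{Fdef} should be a minus so that $\{\Lambda,F\}=-Q_1$ under the convention \eqref{3.13}. None of these affect the structure of the argument.
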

\subsection{Using the KAM Theorem}

For the given $J=\{j_1<j_2<\cdots<j_n\}$ in Theorem \ref{1.1}, define $N:=\max(|j_1|,\cdots,|j_n|)$. Then by the transformation $\Psi$ in Lemma 3.1, we get a new Hamiltonian, still denoted by $H$,
\begin{equation}\label{3.28}H=\Lambda+B+Q_2+R,\end{equation}
which is analytic in some neighbourhood $U$ of the origin of $\bar{\ell}^{a,p}$ with $\Lambda$ in \eqref{3.4}, $B$ in \eqref{3.7}, $Q_2$ in \eqref{3.9}, $R$ satisfying \eqref{3.15}.

Introduce new symplectic coordinates $(x,y,z,\bar{z})$ by setting
\begin{equation}\label{3.29}
\begin{cases}q_{j_b}=\sqrt{\zeta_b+y_b}e^{\mi x_b},
\quad\bar{q}_{j_b}=\sqrt{\zeta_b+y_b}e^{-\mi x_b},\quad b=1,\cdots,n,\\
q_j=z_j,\quad \bar{q}_j=\bar{z}_j,\quad j\in\mathbb{Z}_*:=\bar{\mathbb{Z}}\setminus J,\end{cases}\end{equation}
where $\zeta=(\zeta_1,\cdots,\zeta_n)\in\mathbb{R}_+^n$. Then
\begin{equation}\label{3.30}\Lambda=\sum_{1\leq b\leq n}\sigma_{j_b}j_b^2(\zeta_b+y_b)+\sum_{j\in\mathbb{Z}_*}\sigma_jj^2|z_j|^2,
\end{equation}
\begin{eqnarray}\label{3.31}\nonumber B&=&\frac{1}{4\pi}\sum_{1\leq b\leq n}j_b^2(\zeta_b+y_b)^2+\frac{1}{4\pi}
\sum_{j\in\mathbb{Z}_*}j^2|z_j|^4\\
&&+\frac{1}{2\pi}\sum_{1\leq b,b'\leq n\atop{b\neq b'}}|j_bj_{b'}|(\zeta_b+y_b)(\zeta_{b'}+y_{b'})\nonumber\\
&&+\frac{1}{\pi}\sum_{1\leq b\leq n\atop{j\in\mathbb{Z}_*}}|j_bj|(\zeta_b+y_b)|z_j|^2+\frac{1}{2\pi}\sum_{j,l\in
\mathbb{Z}_*\atop{j\neq l}}|jl||z_j|^2|z_l|^2.
\end{eqnarray}
Thus the new Hamiltonian, still denoted by $H$, up to a constant depending only on $\xi$ , is given by
\begin{equation}\label{3.32}H=N+P=\sum_{1\leq b\leq n}\sigma_{j_b}\omega_by_b+\sum_{j\in\mathbb{Z}_*}\sigma_j\Omega_jz_j\bar{z}_j+\tilde{Q}+Q_2+R\end{equation}
with the symplectic structure
\begin{equation}\label{3.33}\sum_{1\leq b\leq n}\sigma_{j_b}dx_b\wedge dy_b-\mi\sum_{j\in\mathbb{Z}_*}\sigma_jdz_j\wedge d\bar{z}_j,\end{equation}
where
\begin{eqnarray}\label{3.34}\nonumber\omega_b&=&j_b^2+
\frac{\sigma_{j_b}}{2\pi}j_b^2\zeta_b+\frac{\sigma_{j_b}}{\pi}\sum_{1\leq b'\leq n\atop{b'\neq b}}|j_bj_{b'}|\zeta_{b'}\\&&=j_b^2+
\frac{j_b}{\pi}(\frac12|j_b|\zeta_b+\sum_{1\leq b'\leq n\atop{b'\neq b}}|j_{b'}|\zeta_{b'}),\end{eqnarray}
\begin{eqnarray}\label{3.35}\Omega_j&=&j^2+\frac{\sigma_j}{\pi}\sum_{1\leq b\leq n}|j_bj|\zeta_b\nonumber\\
\label{3.35}&=&j^2+\frac{j}{\pi}\sum_{1\leq b\leq n}|j_b|\zeta_b,\end{eqnarray}
\begin{eqnarray}\tilde{Q}&=&\frac{1}{4\pi}\sum_{1\leq b\leq n}j_b^2y_b^2+\frac{1}{4\pi}
\sum_{j\in\mathbb{Z}_*}j^2|z_j|^4\nonumber\\
&&+\frac{1}{2\pi}\sum_{1\leq b,b'\leq n\atop{b\neq b'}}|j_bj_{b'}|y_by_{b'}\nonumber\\
\label{17.10.19.1}&&+\frac{1}{\pi}\sum_{1\leq b\leq n\atop{j\in\mathbb{Z}_*}}|j_bj|y_b|z_j|^2+\frac{1}{2\pi}
\sum_{j,l\in\mathbb{Z}_*\atop{j\neq l}}|jl||z_j|^2|z_l|^2.\end{eqnarray}

For simplicity, introduce \begin{equation}\label{18.3.25.1}\xi_b=\frac{1}{\pi}
(\frac12|j_b|\zeta_b+\sum_{b'\neq b}|j_{b'}|\zeta_{b'}),\quad 1\leq b\leq n.\end{equation}
By direct calculation,
we have
\begin{equation}\label{17.10.19.2}\omega_{b}=j_b^2+j_b\xi_b,\quad
\Omega_{j}=j^2+j\frac{\sum_{b=1}^n\xi_b}{n-\frac12},\end{equation}
\begin{equation} \frac{\partial\xi}{\partial\zeta}=
\frac{1}{\pi}
 \begin{pmatrix}\frac12&1&\cdots&1\\
  1&\frac12&\cdots&1\\
  \cdots&\cdots&\cdots&\cdots\\
 1&1&\cdots&\frac12\end{pmatrix}\mbox{diag}(j_b:1\leq b\leq n)\end{equation}
and
\begin{equation}(\frac{\partial\xi}{\partial\zeta})^{-1}=
\frac{4\pi}{2n-1}\mbox{diag}(j_b^{-1}:1\leq b\leq n)
 \begin{pmatrix}\frac32-n&1&\cdots&1\\
  1&\frac32-n&\cdots&1\\
  \cdots&\cdots&\cdots&\cdots\\
 1&1&\cdots&\frac32-n\end{pmatrix}.
 \end{equation}
 Therefore, it is equivalent to treat $\xi$ as parameters. In view of \eqref{17.10.19.2}, for $k\in\mathbb{Z}^n, l\in\mathbb{Z}^{\mathbb{Z}_*}$, we have
 \begin{equation}\langle k,\omega(\xi)\rangle+\langle l,\Omega(\xi)\rangle=(\sum_{b=1}^n
k_bj_b^2+\sum_{j\in\mathbb{Z}_*}l_jj^2)
+\sum_{b=1}^n(k_bj_b+\sum_{j\in\mathbb{Z}_*}
\frac{l_jj}{n-\frac12})\xi_b.\end{equation}
The following lemma is used to check assumption (C) in the KAM theorem.
 \begin{lem}\label{lem18.3.27.1}For $k\in\mathbb{Z}^n$, $|l|\leq2$,
 at least one of the following $n+1$ inequalities holds:
\begin{equation}\label{17.10.23.1}|\sum_{b=1}^n
k_bj_b^2+\sum_{j\in\mathbb{Z}_*}l_jj^2|
\geq\frac{1}{100n}\max\{|k|,
\sum_{j\in\mathbb{Z}_*}|jl_j|\},\end{equation}
\begin{equation}\label{17.10.23.2}|k_bj_b+
\sum_{j\in\mathbb{Z}_*}
\frac{l_jj}{n-\frac12}|\geq \frac{1}{100n\sum_{b=1}^n|j_b|}
\max\{|k|,\sum_{j\in\mathbb{Z}_*}|jl_j|
\},\quad b=1,\cdots,n.
\end{equation}
 \end{lem}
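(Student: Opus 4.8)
The plan is to argue by contradiction: suppose that for some $k\in\mathbb{Z}^n$ with $|l|\le 2$ \emph{all} $n+1$ inequalities fail, i.e.
\[
\Big|\sum_{b=1}^n k_bj_b^2+\sum_{j\in\mathbb{Z}_*}l_jj^2\Big|<\frac{1}{100n}\,M,\qquad
\Big|k_bj_b+\sum_{j\in\mathbb{Z}_*}\frac{l_jj}{n-\tfrac12}\Big|<\frac{1}{100n\sum_b|j_b|}\,M\quad(b=1,\dots,n),
\]
where $M:=\max\{|k|,\sum_{j\in\mathbb{Z}_*}|jl_j|\}$. From the $n$ "linear" inequalities one reads off that each $k_bj_b$ is extremely close to $-\tfrac{1}{n-1/2}\sum_{j}l_jj$; since $j_b$ are fixed nonzero integers, multiplying by $|j_b|$ and summing over $b$ shows $|k|$ is controlled by a small multiple of $M$ plus a bounded multiple of $\sum_j|jl_j|$. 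The core of the argument is then a careful bookkeeping showing that the right-hand sides are so small (the constant $1/(100n)$ is deliberately tiny) that $M$ cannot equal $|k|$ unless $k$ is small, and a parallel estimate forces $\sum_j|jl_j|$ small as well; but then $M$ is a small bounded integer and the inequalities can be checked by hand, yielding a contradiction unless everything is $0$.

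First I would isolate the case $M=0$, i.e. $k=0$ and all $l_j=0$, where the statement is vacuously fine (both sides $0$, so e.g. the first inequality is an equality $0\ge 0$ — one should confirm the intended reading of "holds" covers this, or note $M=0$ is excluded as trivial). Next, using $|l|\le 2$, I would split into the finitely many shapes of $l$: $l\equiv 0$; $l=\pm e_i$ (one nonzero entry $\pm1$); $l=\pm e_i\pm e_{i'}$ with $i\ne i'$; and $l=\pm 2e_i$ or $l=\pm e_i\mp e_i$ which degenerates. In each shape, $\sum_j l_jj$ and $\sum_j l_jj^2$ become concrete: e.g. for $l=e_i-e_{i'}$ they are $i-i'$ and $i^2-i'^2=(i-i')(i+i')$. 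The key combinatorial identity to exploit is that the "quadratic" combination $\sum_b k_bj_b^2+\sum_j l_jj^2$ and the "linear" combinations $k_bj_b+\frac{1}{n-1/2}\sum_j l_jj$ are related: $\sum_b j_b\cdot(k_bj_b+\frac{1}{n-1/2}\sum_j l_jj)=\sum_b k_bj_b^2+\frac{\sum_b j_b}{n-1/2}\sum_j l_jj$, so if all the linear terms are tiny then $\sum_b k_bj_b^2$ is close to $-\frac{\sum_b j_b}{n-1/2}\sum_j l_jj$, and combined with the quadratic inequality one gets $\sum_j l_jj^2\approx\frac{\sum_b j_b}{n-1/2}\sum_j l_jj$, i.e. essentially $(2n-1)\sum_j l_jj^2=2(\sum_b j_b)(\sum_j l_jj)$ up to a controlled error.

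This is where the hypotheses $2n-1\nmid\sum_b j_b$ and (for $n=2$) $j_1j_2<0$ enter, exactly as the paper signals (subcases 2.2 and 2.3 of its proof). For $l=e_i-e_{i'}$ the near-identity reads $(2n-1)(i+i')\approx 2\sum_b j_b$; since $i,i'$ range over $\mathbb{Z}_*$ the left side can be large, so one must instead use it to pin $i+i'$ — but $2\sum_b j_b$ is a fixed integer not divisible by $2n-1$ unless $i+i'=0$... the arithmetic obstruction $2n-1\nmid\sum_b j_b$ rules out the bad near-resonance when $\sum_j l_jj\ne 0$ is forced, and separately when $\sum_j l_jj=0$ (so $i=i'$ or $i=-i'$, hence $l_jj^2$ also relates) the $n=2$ sign condition $j_1j_2<0$ prevents $k_1j_1^2+k_2j_2^2$ from vanishing for small nonzero $k$. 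I expect the \textbf{main obstacle} to be the bookkeeping in the mixed case $l=\pm e_i\pm e_{i'}$ with large $|i|,|i'|$: there $\sum_j|jl_j|=|i|+|i'|$ can be the maximum $M$, so one cannot merely bound things by $|k|$; one must show that largeness of $|i|+|i'|$ forces largeness of $|\sum_j l_jj^2|=|i^2\pm i'^2|$ relative to $|i|+|i'|$ unless $i=-i'$ (the $l=e_i-e_{-i}$ degeneracy, which is precisely the problematic $\Omega_{-j}-\Omega_j$ term), and in that degenerate sub-sub-case fall back on the quadratic inequality with $l_jj^2$ contributions cancelling, where the index-set restrictions again do the work. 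Once all finitely many shapes are dispatched, the lemma follows.
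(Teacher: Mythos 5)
Your sketch captures the right broad strategy --- argue by contradiction, split over the finitely many shapes of $l$, feed the bounds on $k_bj_b$ coming from the linear inequalities back into the quadratic one, and use the hypotheses on $J$ to kill the remaining near-resonances --- and you correctly single out the key algebraic identity $\sum_b j_b\big(k_bj_b+\tfrac{1}{n-1/2}\sum_j l_jj\big)=\sum_b k_bj_b^2+\tfrac{\sum_b j_b}{n-1/2}\sum_j l_jj$. But the plan has two genuine gaps. First, you skip the initial dichotomy on which of $|k|$ and $\sum_j|jl_j|$ realizes the $\max$ on the right-hand sides. The paper's Case $1$ ($|k|\ge 2\sum_j|jl_j|$) is disposed of without any quadratic information and without any hypothesis on $J$: if all $n$ linear inequalities failed, summing over $b$ yields $|k|<\big(\tfrac{n}{2n-1}+\tfrac{1}{100\sum_b|j_b|}\big)|k|$, absurd for $n\ge2$. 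Without this reduction one cannot replace $\max\{|k|,\sum_j|jl_j|\}$ by $\sum_j|jl_j|$, and the constants in the remaining subcases will not close.

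The second and more serious gap is in your picture of where the hypotheses on $J$ enter. You reserve the combinatorial identity and $2n-1\nmid\sum_b j_b$ for the single shape $l=e_{-j}-e_j$ and propose to handle the other $l=\pm e_i\pm e_{i'}$ by noting $|i^2\pm i'^2|$ is large relative to $|i|+|i'|$. That fails for $l=e_i-e_j$ with $ij<0$: there $|i^2-j^2|=(|i|+|j|)\,\big||i|-|j|\big|$ need not dominate $(|i|+|j|)^2$, so $\sum_b k_bj_b^2$ can cancel $i^2-j^2$ even for $|i|+|j|$ large (try $i=m$, $j=-(m-1)$). The near-identity $\big|(i+j)-\tfrac{2\sum_b j_b}{2n-1}\big|<\tfrac{1}{25n}$ is needed for the \emph{whole} range $ij<0$, not just $i=-j$, and $2n-1\nmid\sum_b j_b$ keeps $\tfrac{2\sum_b j_b}{2n-1}$ at distance at least $\tfrac{1}{2n-1}$ from the integer $i+j$. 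Conversely, $j_1j_2<0$ is not used for a degeneracy of $\sum_j l_jj$ or $\sum_j l_jj^2$; it is used for $l=e_i+e_j$, $n=2$, $ij>0$, where both sums are large but the crude bound $\sum_b k_b^2j_b^2<\tfrac{2}{(2-9/17)^2}(|i|+|j|)^2$ is not small enough against $i^2+j^2$. The sign hypothesis supplies a \emph{cancellation}, not a nonvanishing: the failed linear inequalities force every $k_bj_b$ to carry the same sign, and $j_1<0<j_2$ then forces $k_1,k_2$ to have opposite signs, so $|k_1j_1^2+k_2j_2^2|\le\max\{|k_1j_1^2|,|k_2j_2^2|\}$ instead of the sum. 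Your claim that $j_1j_2<0$ ``prevents $k_1j_1^2+k_2j_2^2$ from vanishing'' inverts the logic; the hypothesis is used to make that combination small.
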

 \begin{proof}
 We prove this lemma in the following cases:

Case 1: $|k|\geq2\sum_{j\in\mathbb{Z}_*}|jl_j|$.

We prove that at least one of the $n$ inequalities in \eqref{17.10.23.2} is true. Otherwise, for any $1\leq b\leq n$,
 \begin{equation}|k_bj_b+
\sum_{j\in\mathbb{Z}_*}
\frac{l_jj}{n-\frac12}|< \frac{|k|}{100n\sum_{b=1}^n|j_b|},
\end{equation}
 and thus
 \begin{equation}|k_b|\leq|k_bj_b|<
\sum_{j\in\mathbb{Z}_*}
\frac{|l_jj|}{n-\frac12}+ \frac{|k|}{100n\sum_{b=1}^n|j_b|}\leq
\frac{|k|}{2n-1}+\frac{|k|}{100n\sum_{b=1}^n|j_b|}.\end{equation}
Taking the sum of the above inequalities with respect to $1\leq b\leq n$, we get
\begin{equation}|k|<\big(\frac{n}{2n-1}
+\frac{1}{100\sum_{b=1}^n|j_b|}\big)|k|,\end{equation}
 which is impossible by noticing that $n\geq2$.

Case 2: $|k|<2\sum_{j\in\mathbb{Z}_*}|jl_j|$.

Supposing the lemma not true, then
\begin{equation}\label{17.10.23.1}|\sum_{b=1}^n
k_bj_b^2+\sum_{j\in\mathbb{Z}_*}l_jj^2|
<\frac{1}{50n}
\sum_{j\in\mathbb{Z}_*}|jl_j|,\end{equation}
\begin{equation}\label{18.3.27.2}|k_bj_b+
\sum_{j\in\mathbb{Z}_*}
\frac{l_jj}{n-\frac12}|< \frac{1}{50n\sum_{b=1}^n|j_b|}
\sum_{j\in\mathbb{Z}_*}|jl_j|,\quad b=1,\cdots,n.
\end{equation}
In the following, we will derive contradiction in all possible cases. Denote $e_j\in\mathbb{Z}^{\mathbb{Z}_*}$ as the sequence with all zeros except the $j$-th component, which is $1$.

Subcase 2.1: $l=e_j$ (the same for $-e_j$). From \eqref{17.10.23.1} and \eqref{18.3.27.2}, we get
\begin{equation}\label{17.11.11.1}|\sum_{b=1}^n
k_bj_b^2+ j^2|<\frac{|j|}{50n},\end{equation}
\begin{equation}\label{17.11.11.2}|k_bj_b+\frac{j}{n-\frac12}|<
\frac{|j|}{50n\sum_{b=1}^n|j_b|},\quad b=1,\cdots,n.
\end{equation}
 By \eqref{17.11.11.2} and $n\geq2$, we know
 \begin{equation}\label{17.11.11.7}|k_bj_b|<
\frac{|j|}{n-\frac12}+\frac{|j|}{50n\sum_{b=1}^n|j_b|}
<\frac{|j|}{n-\frac59},\quad b=1,\cdots,n,\end{equation}
and thus
\begin{eqnarray}
\nonumber|\sum_{b=1}^nk_bj_b^2+j^2|&\geq& j^2-\sum_{b=1}^n|k_bj_b^2|\\
\nonumber&=& j^2-\sum_{k_b\neq0}|k_bj_b^2|\\
\nonumber&\geq& j^2-\sum_{k_b\neq0}k_b^2j_b^2\\
\nonumber&> &j^2-\frac{n}{(n-\frac59)^2}j^2\\
&>&\frac{1}{50n}j^2,\end{eqnarray}
which contradicts with \eqref{17.11.11.1}.

Subcase 2.2: $l=e_i+e_j$ (the same for $-e_i-e_j$). From \eqref{17.10.23.1} and \eqref{18.3.27.2}, we get

\begin{equation}\label{17.10.25.3}|\sum_{b=1}^n
k_bj_b^2+i^2+j^2|<\frac{1}{50n}(|i|+|j|),
\end{equation}
\begin{equation}\label{17.10.25.6}|k_bj_b+\frac{i+j}{n-\frac12}|<
\frac{|i|+|j|}{50n\sum_{b=1}^n|j_b|},\quad b=1,\cdots,n.
\end{equation}
By \eqref{17.10.25.6} and $n\geq2$, we know
 \begin{equation}\label{17.10.23.5}|k_bj_b|< \frac{|i+j|}{n-\frac12}+\frac{|i|+|j|}
 {50n\sum_{b=1}^n|j_b|}
 <\frac{|i|+|j|}{n-\frac{9}{17}},\quad b=1,\cdots,n.\end{equation}
Therefore, for $n\geq3$,
\begin{eqnarray}
\nonumber|\sum_{b=1}^nk_bj_b^2|&=&|\sum_{k_b\neq0}k_bj_b^2|\\
\nonumber&\leq&\sum_{k_b\neq0}k_b^2j_b^2\\
\nonumber&<&\frac{n}{(n-\frac{9}{17})^2}(|i|+|j|)^2\\
\nonumber&\leq&\frac{5}{294}(i^2+j^2),
\end{eqnarray}
and thus
\begin{eqnarray}\nonumber|\sum_{b=1}^nk_bj_b^2
+i^2+j^2|&\geq& i^2+j^2-\sum_{b=1}^n|k_bj_b^2|\\
\nonumber&>&i^2+j^2-
\frac{5}{294}(i^2+j^2)\\
&>&\frac{1}{50n}(i^2+j^2),\end{eqnarray}
which contradicts with \eqref{17.10.25.3}. Finally we
prove the case $n=2$ with $j_1<0<j_2$. If $ij<0$, by the first inequality of \eqref{17.10.23.5},
\begin{eqnarray}
\nonumber|\sum_{b=1}^2k_bj_b^2|&\leq&\sum_{k_b\neq0}k_b^2j_b^2\\
\nonumber&<&2\Big(\frac{2|i+j|}{3}+\frac{|i-j|}{
100(|j_1|+|j_2|)}\Big)^2\\
\nonumber&=&\frac{8}{9}(i+j)^2+
\frac{2|i^2-j^2|}{75(|j_1|+|j_2|)}
+\frac{(i-j)^2}{5000(|j_1|+|j_2|)^2},
\end{eqnarray}
and thus
\begin{eqnarray}\nonumber|\sum_{b=1}^2k_bj_b^2
+i^2+j^2|&\geq& i^2+j^2-\sum_{b=1}^2|k_bj_b^2|\\
\nonumber&>&(i^2+j^2)\Big(1-\frac{8}{9}-\frac{2}{75(|j_1|+|j_2|)}
-\frac{1}{2500(|j_1|+|j_2|)^2}\Big)\\
&>&\frac{1}{100}(i^2+j^2),\end{eqnarray}
which contradicts with \eqref{17.10.25.3}.
 Otherwise, $ij>0$. From \eqref{17.10.25.6}, $k_bj_b$ has the same sign as $i+j$. Then in view of
 $j_1<0<j_2$, we conclude that $k_1$ and $k_2$ have different signs, and thus by the second inequality of
\eqref{17.10.23.5},
\begin{equation}\label{18.3.27.1}
|\sum_{b=1}^2k_bj_b^2|\leq
\max\{|k_1j_1^2|,|k_2j_2^2|\}
\leq\frac{1}{(2-\frac{9}{17})^2}(i+j)^2\leq
\frac{2}{(2-\frac{9}{17})^2}(i^2+j^2).
\end{equation}
Using \eqref{18.3.27.1}, we obtain
\begin{eqnarray*}
|\sum_{b=1}^2k_bj_b^2+i^2+j^2|&\geq&(i^2+j^2)-
|\sum_{b=1}^2k_bj_b^2|\\
&>&(i^2+j^2)-\frac{2}{(2-\frac{9}{17})^2}(i^2+j^2)\\
&>&\frac{1}{100}(i^2+j^2),
\end{eqnarray*}
which contradicts with \eqref{17.10.25.3}.

Subcase 2.3: $l=e_i-e_j$, $ij>0, i\neq j$. From \eqref{17.10.23.1} and \eqref{18.3.27.2}, we get
\begin{equation}\label{17.11.12.5}|\sum_{b=1}^n
k_bj_b^2+i^2-j^2|<\frac{1}{50n}|i+j|,\end{equation}
\begin{equation}\label{17.11.12.6}|k_bj_b+\frac{i-j}{n-\frac12}|<
\frac{1}{50n\sum_{b=1}^n|j_b|}|i+j|,\quad b=1,\cdots,n.
\end{equation}
By \eqref{17.11.12.6}, we get
\begin{equation}\label{17.11.12.7}|k_bj_b|<
\frac{|i-j|}{n-\frac12}+
\frac{1}{50n\sum_{b=1}^n|j_b|}|i+j|,\quad b=1,\cdots,n,\end{equation}
and thus
\begin{eqnarray}\nonumber|\sum_{b=1}^nk_bj_b^2|&=&|\sum_{k_b\neq0}k_bj_b^2|\\
\nonumber&\leq&\sum_{k_b\neq0}|k_bj_b||j_b|\\
\nonumber&<&\sum_{k_b\neq0}
 \Big(\frac{|i-j|}{n-\frac12}+
 \frac{1}{50n\sum_{b=1}^n|j_b|}|i+j|\Big)|j_b|\\
 \nonumber&=&\frac{|i-j|}{n-\frac12}\sum_{k_b\neq0}
|j_b|+\frac{\sum_{k_b\neq0}|j_b|}{50n\sum_{b=1}^n|j_b|}|i+j|\\
\nonumber&\leq&\frac{|i-j|}{n-\frac12}\sum_{k_b\neq0}
|k_bj_b|+\frac{1}{50n}|i+j|\\
\nonumber&<&\frac{|i-j|}{n-\frac12}\sum_{k_b\neq0}\Big(
 \frac{|i-j|}{n-\frac12}+\frac{1}
 {50n\sum_{b=1}^n|j_b|}|i+j|\Big)
 +\frac{1}{50n}|i+j|\\
\label{17.11.12.8}&\leq&\frac{n}{(n-\frac12)^2}|i-j|^2+
 \frac{|i^2-j^2|}{50(n-\frac12)\sum_{b=1}^n|j_b|}+
 \frac{1}{50n}|i+j|.
\end{eqnarray}
Furthermore, for $n\geq2$, we obtain
 \begin{eqnarray}\nonumber|\sum_{b=1}^nk_bj_b^2+i^2-j^2|&\geq&
 |i^2-j^2|-|\sum_{b=1}^nk_bj_b^2|\\
 \nonumber&>&|i^2-j^2|\Big(1-\frac{n}{(n-\frac12)^2}-
 \frac{1}{50(n-\frac12)\sum_{b=1}^n|j_b|}-\frac{1}{50n}\Big)\\
 \nonumber&\geq&|i^2-j^2|\big(1-\frac89-\frac{1}{150}
 -\frac{1}{100}\big)\\
 \nonumber &>&\frac{1}{50n}|i^2-j^2|,
\end{eqnarray}
 which contradicts with \eqref{17.11.12.5}.

Subcase 2.4: $l=e_i-e_j$, $ij<0$. From \eqref{17.10.23.1} and \eqref{18.3.27.2}, we
get
\begin{equation}\label{17.11.12.3}|\sum_{b=1}^n
k_bj_b^2+i^2-j^2|<\frac{1}{50n}|i-j|,\end{equation}
\begin{equation}\label{17.11.12.1}|k_bj_b+\frac{i-j}
{n-\frac12}|<\frac{1}{50n\sum_{b=1}^n|j_b|}|i-j|,\quad b=1,\cdots,n.
\end{equation}
 In view of
\eqref{17.11.12.1}, we get
\begin{eqnarray}
\nonumber|\sum_{b=1}^nk_bj_b^2+\frac{i-j}{n-\frac12}\sum_{b=1}^nj_b|
&\leq&\sum_{b=1}^n|k_bj_b+\frac{i-j}{n-\frac12}||j_b|\\
\nonumber&<&\frac{1}{50n\sum_{b=1}^n|j_b|}|i-j|
\cdot\sum_{b=1}^n|j_b|\\
\label{17.11.12.2}&=&\frac{1}{50n}|i-j|.
\end{eqnarray}
From \eqref{17.11.12.3} and \eqref{17.11.12.2}, we get
\begin{equation}\label{17.11.12.4}|(i^2-j^2)-
\frac{i-j}{n-\frac12}
\sum_{b=1}^nj_b|<\frac{1}{25n}|i-j|,\end{equation}
and thus
\begin{equation}\label{18.3.26.8}|(i+j)-
\frac{2}{2n-1}
\sum_{b=1}^nj_b|<\frac{1}{25n}.\end{equation}
On the other hand, recalling $2n-1\nmid\sum_{b=1}^nj_b$, we have
\begin{equation}
|(i+j)-\frac{2}{2n-1}\sum_{b=1}^nj_b|>\frac{1}{2n-1},
\end{equation}
which contradicts with \eqref{18.3.26.8}.
\end{proof}

 Denote the normal form $\sum_{1\leq b\leq n}\sigma_{j_b}\omega_by_b+\sum_{j\in\mathbb{Z}_*}
\sigma_j\Omega_jz_j\bar{z}_j$ by $N$ and the perturbation $\tilde{Q}+Q_2+R$ by $P$. Consider the Hamiltonian $H=N+P$ on $D(s,r)\times\Xi_r$, where the phase domain $D(s,r)$ is defined in \eqref{17.9.19.1} with $s,r>0$ suitably small and the parameter domain
\begin{equation}\label{18.3.31.1}\Xi_r
=\{\xi\in\mathbb{R}^n_+
:|\xi|\leq r^{3/2}\}.\end{equation}
In view of \eqref{17.10.19.1}, we have
\begin{equation}\label{18.3.25.2}||X_{\tilde{Q}}||
_{s,r,p-1,\mathbf{a};\Xi_r}=O(r^2).\end{equation}
In view of \eqref{3.9}, $Q_2$ is at least $3$-order about $z,\bar{z}$, and then following the proof of Proposition 7.2 in \cite{Berti2}, we get
\begin{equation}\label{18.3.25.3}||X_{Q_2}||_{s,r,p-1,\mathbf{a};
\Xi_r}=O(r^{\frac74}).\end{equation}
In view of \eqref{3.15}, following the proof of Proposition 7.2 in \cite{Berti2}, we get
\begin{equation}\label{18.3.25.4}||X_{R}||_{s,r,p-1,\mathbf{a};
\Xi_r}=O(r^{\frac74}).\end{equation}
We conclude from \eqref{18.3.25.2}-\eqref{18.3.25.4},
\begin{equation}\label{18.3.25.5}||X_{P}||_{s,r,p-1,\mathbf{a};
\Xi_r}=O(r^{\frac74}).\end{equation}

Define
\begin{equation}\mathcal{O}_r:=U_{-\alpha}
\Xi_r,\end{equation}
where $U_{-\rho}\Xi$ is the subset of all points in $\Xi$ with boundary distance greater than $\rho$.
Now study the Hamiltonian $H=N+P$ on $D(s,r)\times\mathcal{O}_r$ by the KAM theorem. In view of
 \eqref{17.10.19.2}, the assumption (A) is fulfilled with $m=\frac12$; the assumption (B) is fulfilled with $\delta=1$ and
  \begin{equation}M_1=\max_{1\leq b\leq n}|j_b|,\quad M_2=\frac{n}{n-\frac12};\end{equation}
 by Lemma \ref{lem18.3.27.1}, the assumption (C) is fulfilled with
 \begin{equation}M_3=\frac{1}{100n\sum_{b=1}^n
 |j_b|}.\end{equation}
 Choose
 \begin{equation}\alpha=r^{\frac85}\gamma^{-1},\quad
 \beta=\frac{1}{13},\end{equation}
 where $\gamma$ is taken from the KAM theorem.
 Set $M=M_1+M_2$, which only depends on the set $J$. Observe that when $r$ is small enough,
 \begin{equation}\label{18.4.1.2}\epsilon:=
 ||X_P||_{s,r,p-1,\mathbf{a};\mathcal{O}_r}+
 \frac{\alpha}{M}
 ||X_P||^{lip}_{s,r,p-1,\mathbf{a};
 \mathcal{O}_r}=O(r^{\frac74})\leq
 (\alpha\gamma)^{1+\beta},\end{equation}
  which is the smallness condition \eqref{17.7.21.20}.
Applying Theorem \ref{thm7.12.1}, we obtain a Cantor set $\mathcal{O}_r^{-}\subset\mathcal{O}_r$ with
\begin{equation}|\mathcal{O}_r\setminus
\mathcal{O}_r^{-}|=O((r^{\frac32})^{n-1}\alpha
)=O(r^{\frac32n+\frac{1}{10}}),\end{equation}
a Lipschitz family of smooth torus embeddins $\Phi_r:\mathbb{T}^n\times
\mathcal{O}_r^-\rightarrow\mathcal{P}^{a,p}$, and a Lipschitz frequency map: $\mathcal{O}_r^-\rightarrow\mathbb{R}^n$, such that for each $\xi\in\mathcal{O}_r^-$ the map $\Phi_r$ restricted to $\mathbb{T}^n\times\{\xi\}$ is a smooth embedding of a rotational torus with frequencies $\phi_r(\xi)$ for the perturbed Hamiltonian $H$ at $\xi$. Moreover, for every non-negative integer multi-index $k=(k_1,\cdots,k_n)$,
\begin{equation}\label{18.3.26.2}||\partial_x^k(\Phi_r-\Phi_0)||
_{s,r,p;\mathbb{T}^n\times\mathcal{O}_r^-}+
\frac{\alpha}{M}||\partial_x^k(\Phi_r-\Phi_0)||^{lip}
_{s,r,p;\mathbb{T}^n\times\mathcal{O}_r^-}=
O(\epsilon^{\frac{1}{1+\beta}}/\alpha)=O(r^{1/40}),
\end{equation}
\begin{equation}\label{18.3.26.3}|\phi_r-\omega|_{\mathcal{O}_r^-}
+\frac{\alpha}{M}|\phi_r-\omega|^{lip}
_{\mathcal{O}_r^-}=O(\epsilon)=O(r^{7/4}).\end{equation}

The Cantor set $\mathcal{O}_r^-$ by itself is not dense at the origin. To obtain such a set, following \cite{Poschel3}, we take the union of a stable sequence of subsets of $\mathcal{O}_r^-$.
Set $r_j=\frac{r_0}{2^j},j\geq0$ and define
\begin{equation}\mathcal{C}:=\bigcup_{j\geq0}
\mathcal{C}_{r_j},\end{equation}
where  \begin{equation}\mathcal{C}_r=\mathcal{O}_r^-\cap
\Big(U_{-\alpha}(\Xi_r\setminus\Xi_{\frac r2})\Big).\end{equation}
The same as the proof in \cite{L-Y}, the Cantor set $\mathcal{C}$ has full density at the origin.
 Then,
define the embedding $\Phi:\mathbb{T}^n\times\mathcal{C}
\rightarrow\mathcal{P}^{a,p}$ and the frequency map $\phi:\mathcal{C}\rightarrow\mathbb{R}^n$ by piecing together the corresponding definitions on each components. Furthermore, define $\Psi:=\Phi+T_{\xi}$, where $T_{\xi}(x,\xi)=(0,\xi,0,0)$.
The estimates of $\Psi-\Psi_0$ and $\phi-\omega$ on $\mathcal{C}\cap\Xi_{r_j}$ follow from \eqref{18.3.26.2} and \eqref{18.3.26.3}.
This finally completes the proof of Theorem \ref{1.1}.

\section{The Homological Equations}
In this section, the Poisson bracket $\{H,F\}$ for two Hamiltonians $H,F$ is defined with respect to the symplectic structure \eqref{7.13.4}, i.e.,
$$\{H,F\}=\sum_{1\leq b\leq n}\sigma_{j_b}(\frac{\partial H}{\partial x_b}\frac{\partial F}{\partial y_b}-
\frac{\partial H}{\partial y_b}\frac{\partial F}{\partial x_b})-\mathbf{i}\sum_{j\in\mathbb{Z}_*}\sigma_j(\frac{\partial H}{\partial z_j}\frac{\partial F}{\partial \bar{z}_j}-
\frac{\partial H}{\partial \bar{z}_j}\frac{\partial F}{\partial z_j}).$$

\subsection{Derivation of homological equations}

The proof of
 Theorem \ref{thm7.12.1} employs the rapidly converging iteration scheme of Newton type to deal with small-divisor problems introduced by Kolmogorov, involving infinite sequence of coordinate transformations. At the $\nu$-th step of the scheme, the Hamiltonian
$$H_{\nu}=N_{\nu}+P_{\nu}$$
is considered, where $N_{\nu}$ is a generalized normal form
$$N_{\nu}=\sum_{1\leq b\leq n}\sigma_{j_b}\omega_{\nu,b}(\xi)y_b+\sum_{j\in\mathbb{Z}_*}\sigma_j\Omega_{\nu,j}(\xi)z_j\bar{z}_j,$$
$P_{\nu}$ is a small perturbation. A transformation $\Phi_{\nu}$ is set up so that
$$H_{\nu+1}=H_{\nu}\circ\Phi_{\nu}=N_{\nu+1}+P_{\nu+1},$$
where $N_{\nu+1}$ is another generalized normal form, $P_{\nu+1}$ is a much smaller perturbation. We drop the index $\nu$ of $H_{\nu}$, $N_{\nu}$, $P_{\nu}$, $\omega_{\nu}$, $\Omega_{\nu}$, $\Phi_{\nu}$ and shorten the index $\nu+1$ as $+$.

 For a function $u$ on $\mathbb{T}^n$, let
 $$[u]=\frac{1}{(2\pi)^n}\int_{\mathbb{T}^n}u(x)dx.$$
Let $R$ be $2$-order Taylor polynomial truncation of $P$, that is,
\begin{equation}\label{7.12.4} R=R^x+\langle R^y,y\rangle+\langle R^z,z\rangle+\langle R^{\bar{z}},\bar{z}\rangle+
\langle R^{zz}z,z\rangle+\langle R^{z\bar{z}}z,\bar{z}\rangle+\langle R^{\bar{z}\bar{z}}\bar{z},\bar{z}\rangle,\end{equation}
where $\langle\cdot,\cdot\rangle$ denotes the formal products for two column vectors and $R^x,R^y,R^z,R^{\bar{z}},R^{zz},R^{z\bar{z}},R^{\bar{z}\bar{z}}$ depend on $x$ and $\xi$. Denote $[[R]]$ as the part of $R$ in generalized normal form as follows:
$$[[R]]=[R^x]+\langle[R^y],y\rangle+\langle \mbox{diag}(R^{z\bar{z}})z,\bar{z}\rangle,$$
where $\mbox{diag}(R^{z\bar{z}})$ is the diagonal of $R^{z\bar{z}}$. In the following, the term $[R^x]$ will be omitted since it does not affect the dynamics.

The coordinate transformation $\Phi$ is obtained as the time $1$-map $X_F^t|_{t=1}$ of a Hamiltonian vector field $X_F$, where $F$ is of the same form as $R$:
\begin{equation}\label{7.12.5} F=F^x+\langle F^y,y\rangle+\langle F^z,z\rangle+\langle F^{\bar{z}},\bar{z}\rangle+
\langle F^{zz}z,z\rangle+\langle F^{z\bar{z}}z,\bar{z}\rangle+\langle F^{\bar{z}\bar{z}}\bar{z},\bar{z}\rangle,\end{equation}
and $[[F]]=0$. Denote $\partial_{\omega}=\sum_{1\leq b\leq n}\omega_b\frac{\partial}{\partial x_b},$
$\Lambda=\mbox{diag}(\Omega_j:j\in\mathbb{Z}_*)$. Then we have
\begin{align}\nonumber H_+=&H\circ\Phi\\
\nonumber=&(N+R)\circ X_F^1+(P-R)\circ X_F^1\\
 \nonumber=&N+\{N,F\}+R+\int_0^1\{(1-t)\{N,F\}+R,F\}\circ X_F^tdt+(P-R)\circ
X_F^1\\
\label{17.7.21.3}=&N+\sum_{j\in\mathbb{Z}_*}\sigma_j\langle \partial_x\Omega_j,F^y\rangle z_j\bar{z}_j+\langle[R^y],y\rangle+
\langle\mbox{diag}(R^{z\bar{z}})z,\bar{z}\rangle\\
\label{17.7.21.4}&+(-\partial_{\omega}F^x+R^x)\\
\label{17.7.21.5}&+\langle -\partial_{\omega}F^y+R^y-[R^y],y\rangle\\
\label{17.7.21.6}&+\langle-\partial_{\omega}F^z+\mathbf{i}\Lambda F^z+R^z,z\rangle\\
\label{17.7.21.7}&+\langle-\partial_{\omega}F^{\bar{z}}-\mathbf{i}\Lambda F^{\bar{z}}+R^{\bar{z}},\bar{z}\rangle\\
\label{17.7.21.8}&+\langle(-\partial_{\omega}F^{zz}+\mathbf{i}\Lambda F^{zz}+\mathbf{i}F^{zz}\Lambda+R^{zz})z,z\rangle\\
\label{17.7.21.9}&+\langle(-\partial_{\omega}F^{\bar{z}\bar{z}}-\mathbf{i}\Lambda F^{\bar{z}\bar{z}}-\mathbf{i}F^{\bar{z}\bar{z}}\Lambda+R^{\bar{z}\bar{z}})\bar{z},\bar{z}\rangle\\
\label{17.7.21.10}&+\langle(-\partial_{\omega}F^{z\bar{z}}-\mathbf{i}\Lambda F^{z\bar{z}}+\mathbf{i}F^{z\bar{z}}\Lambda+R^{z\bar{z}}-
\mbox{diag}(R^{z\bar{z}}))z,\bar{z}\rangle\\
\label{17.7.21.11}&+\int_0^1\{(1-t)\{N,F\}+R,F\}\circ X_F^tdt+(P-R)\circ
X_F^1.
\end{align}
We wish to find the function $F$ such that
\eqref{17.7.21.4}-\eqref{17.7.21.10} vanish. To this end,
$F^x,F^y,F^z,F^{\bar{z}}, F^{zz},F^{z\bar{z}}$ and
$F^{\bar{z}\bar{z}}$ should satisfy the homological equations:
\begin{align}\label{14.2.4.10}\partial_{\omega}F^x&=R^x,\\
\label{14.2.4.11}\partial_{\omega}F^y&=R^y-[R^y],\\
 \label{14.2.4.12}\partial_{\omega}F^z_j-\mi\Omega_jF^z_j&=R^z_j,\quad j\in\mathbb{Z}_*,\\
\label{14.2.4.13}\partial_{\omega}F^{\bar{z}}_j+\mi\Omega_jF^{\bar{z}}_j&=R^{\bar{z}}_j,\quad j\in\mathbb{Z}_*,\\
\label{14.2.4.14}\partial_{\omega}F^{zz}_{ij}-\mi(\Omega_i+\Omega_j)F^{zz}_{ij}&=R^{zz}_{ij},\quad
i,j\in\mathbb{Z}_*,\\
\label{14.2.4.15}\partial_{\omega}F^{\bar{z}\bar{z}}_{ij}+\mi(\Omega_i+\Omega_j)F^{\bar{z}\bar{z}}_{ij}&=R_{ij}^{\bar{z}\bar{z}},\quad
i,j\in\mathbb{Z}_*,\\
\label{14.2.4.16}\partial_{\omega}F^{z\bar{z}}_{ij}+\mi(\Omega_i-\Omega_j)F^{z\bar{z}}_{ij}&=R^{z\bar{z}}_{ij},\quad i,j\in\mathbb{Z}_*,i\neq j.
\end{align}

\subsection{Two lemmas for solving the homological equations}

The homological equations \eqref{14.2.4.10} and \eqref{14.2.4.11} can be directly
solved by comparing Fourier coefficients; the homological equations \eqref{14.2.4.12}-\eqref{14.2.4.16} are solved by using Lemma \ref{lem17.7.31.1} and Lemma \ref{lem17.9.22.1} below.

Recall that $J=\{j_1<\cdots<j_n\}\subset\bar{\mathbb{Z}}$. Denote
$C_J=\max_{1\leq b\leq n}|j_b|$ and
 \begin{equation}\label{17.11.4.1}
 \pi(k,\mathbf{m})=\sum_{b=1}^nk_bj_b+
 \mathbf{m},\ \mathbf{m}\in\mathbb{Z}.\end{equation}
For an analytic function $u(x)$ on $D(s)$, define its momentum majorant norm as
\begin{equation*}|u|_{s,\mathbf{a},\mathbf{m}}:=
\sum_{k\in\mathbb{Z}^n}
|\hat{u}_k|e^{|k|s}e^{\mathbf{a}|\pi(k,\mathbf{m})|},\end{equation*}
 where $\hat{u}_k$ is the $k$-Fourier coefficient of $u:$ $\hat{u}_k=(2\pi)^{-n}\int_{\mathbb{T}^n}u(x)e^{-\mathbf{i}k\cdot x}dx$.
 The following lemma is Theorem 1.4 in \cite{L-Y2} with the  momentum majorant norm estimate instead of the sup-norm estimate, see \eqref{17.11.3.1}.

\begin{lem}\label{lem17.7.31.1}Consider the first-order partial differential equation
\begin{equation}\label{17.7.20.15}-\mathbf{i}\partial_{\omega}u+\lambda u+\mu(x)u=p(x),\quad x\in\mathbb{T}^n\end{equation}
for the unknown function $u$ defined on the torus $\mathbb{T}^n$, where $\omega=(\omega_1,\cdots,\omega_n)\in\mathbb{R}^n$ and $\lambda\in\mathbb{C}$. Assume:
\begin{itemize}\item[(1)] There are constants $\alpha_1,\alpha_2,\tilde{\gamma}>0$ and $\tau>n$ such that
\begin{equation}\label{17.7.20.16}|\langle k,\omega\rangle|\geq\frac{\alpha_1}{|k|^{\tau}},\quad k\in\mathbb{Z}^n\setminus\{0\}, \end{equation}
\begin{equation}\label{17.7.20.17}|\langle k,\omega\rangle+\lambda|\geq\frac{\alpha_2\tilde{\gamma}}
{1+|k|^{\tau}},\quad k\in\mathbb{Z}^n.  \end{equation}
\item[(2)] $\mu:D(s)\rightarrow\mathbb{C}$ is real analytic (here `real' means $\mu(\mathbb{T}^n)\subset\mathbb{R}$) and is of zero average, i.e., $\int_{\mathbb{T}^n}\mu(\phi)d\phi=0$.
Moreover, assume there is a constant $C>0$ such that
\begin{equation}\label{17.7.20.18}|\mu|_{s,\tau+1}:=\sum_{k\in\mathbb{Z}^n}
|\hat{\mu}_k||k|^{\tau+1}e^{|k|s}
\leq C\tilde{\gamma},\end{equation}
where $\hat{\mu}_k$ is the $k$-Fourier coefficient of $\mu$.
\item[(3)] $p(x)$ is analytic in $x$ in $D(s)$ with finite momentum majorant norm.
 \end{itemize}
Then \eqref{17.7.20.15} has a unique solution $u(x)$ which is defined in a narrower domain $D(s-\sigma )$ with $0<\sigma<s$, and which satisfies
\begin{equation}\label{17.11.3.1}|u|_{s-\sigma,\mathbf{a},\mathbf{m}}
\leq\frac{c(n,\tau)}{\alpha_2\tilde{\gamma}
\sigma^{2n+\tau}}
e^{2C\tilde{\gamma s}/\alpha_1}|p|_{s,\mathbf{a},\mathbf{m}}\end{equation}
for $4\mathbf{a}C_J\leq\sigma<\min\{1,s\}$ and the constant $c(n,\tau)=4^{n+\tau}(8e+8)^n(6e+6)^n(1+(\frac{3\tau}{e})^{\tau})$.
\end{lem}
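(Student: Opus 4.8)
The plan is to reduce the variable-coefficient equation \eqref{17.7.20.15} to the constant-coefficient one by finding a change of unknown $u=e^{\psi(x)}w$ that kills the $\mu(x)u$ term, then apply the already-available constant-coefficient estimate (the sup-norm version, which is Theorem 1.4 of \cite{L-Y2}), and finally convert the sup-norm bound into the momentum-majorant-norm bound \eqref{17.11.3.1} using the interplay between the two norms built into the definitions. First I would look for $\psi$ solving $-\mathbf{i}\partial_\omega\psi=\mu(x)$; since $\mu$ has zero average and $\omega$ is Diophantine by \eqref{17.7.20.16}, the Fourier coefficients of $\psi$ are $\hat\psi_k=\hat\mu_k/\langle k,\omega\rangle$ for $k\neq0$, and hypothesis \eqref{17.7.20.18} with the weight $|k|^{\tau+1}$ exactly guarantees $\sum_k|\hat\psi_k|e^{|k|s}\leq C\tilde\gamma s/\alpha_1$ (the extra factor $|k|^{\tau}$ absorbs $1/\langle k,\omega\rangle$ and the leftover $|k|$ with the $e^{|k|s}$-part of the width loss, or one simply shrinks the width slightly). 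Thus $|\psi|_{s'}\lesssim C\tilde\gamma s/\alpha_1$ and $e^{\pm\psi}$ is analytic with norm controlled by $e^{C\tilde\gamma s/\alpha_1}$, which is where the factor $e^{2C\tilde\gamma s/\alpha_1}$ in \eqref{17.11.3.1} comes from (one factor from conjugating $p$, one from converting $w$ back to $u$).

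After the substitution the equation becomes $-\mathbf{i}\partial_\omega w+\lambda w=e^{-\psi}p(x)=:\tilde p(x)$, a genuinely constant-coefficient small-divisor equation. Under \eqref{17.7.20.17} this has the unique solution $\hat w_k=\hat{\tilde p}_k/(\langle k,\omega\rangle+\lambda)$, and the quantitative sup-norm statement from \cite{L-Y2} gives $\|w\|_{s-\sigma}\leq \frac{c(n,\tau)}{\alpha_2\tilde\gamma\sigma^{2n+\tau}}\|\tilde p\|_s$ with precisely the constant $c(n,\tau)$ quoted. I would then multiply back: $u=e^{\psi}w$, giving a sup-norm bound for $u$ with the two exponential factors and the right power of $\sigma$.

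The remaining — and in this paper the genuinely new — step is to upgrade sup-norm control to momentum-majorant-norm control, and this is where the hypothesis $4\mathbf{a}C_J\leq\sigma<\min\{1,s\}$ enters. The idea is that the momentum-majorant norm $|u|_{s,\mathbf a,\mathbf m}=\sum_k|\hat u_k|e^{|k|s}e^{\mathbf a|\pi(k,\mathbf m)|}$ differs from the plain analytic norm $\sum_k|\hat u_k|e^{|k|s}$ only by the factor $e^{\mathbf a|\pi(k,\mathbf m)|}$, and since $|\pi(k,\mathbf m)|\leq C_J|k|+|\mathbf m|$, one has $e^{\mathbf a|\pi(k,\mathbf m)|}\leq e^{\mathbf a|\mathbf m|}e^{\mathbf a C_J|k|}\leq e^{\mathbf a|\mathbf m|}e^{(\sigma/4)|k|}$. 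Hence $|u|_{s-\sigma,\mathbf a,\mathbf m}\leq e^{\mathbf a|\mathbf m|}\sum_k|\hat u_k|e^{(s-\sigma)|k|}e^{(\sigma/4)|k|}= e^{\mathbf a|\mathbf m|}\|u\|_{s-3\sigma/4}$, and conversely $|p|_{s,\mathbf a,\mathbf m}\geq e^{-\mathbf a|\mathbf m|}\|p\|_s$ — but one must be careful to run the whole argument at analyticity widths differing by fractions of $\sigma$ so that the $e^{\mathbf a|\mathbf m|}$ factors cancel between the bound on $u$ and the bound on $p$ (they carry the same $\mathbf m$), while the $e^{\mathbf aC_J|k|}$ factors are swallowed into redefinitions of $\sigma$ at the cost of enlarging $c(n,\tau)$, which is why $c(n,\tau)$ is taken with that particular somewhat generous constant. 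I expect the bookkeeping of these nested width losses — making sure that $4\mathbf a C_J\le\sigma$ is exactly what is needed so that $\mathbf aC_J\le\sigma/4$ can be absorbed three times (for $\psi$, for $w$, and for the norm conversion) without destroying the stated power $\sigma^{2n+\tau}$ — to be the main technical obstacle; the algebraic structure of the proof is otherwise a direct reduction to the already-proved constant-coefficient case.
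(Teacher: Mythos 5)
Your last paragraph (the norm conversion) is essentially the whole proof in the paper, and it is correct: one uses $\mathbf{a}C_J\le\sigma/4$ to bound $e^{\mathbf{a}|\pi(k,\mathbf{m})|}\le e^{\mathbf{a}|\mathbf{m}|}e^{\sigma|k|/4}$, loses another $\sigma/4$ to pass from $\sum_k|\hat u_k|e^{|k|(s-3\sigma/4)}$ to $\sup_{D(s-\sigma/2)}|u|\cdot\sum_k e^{-|k|\sigma/4}\le(8e+8)^n\sigma^{-n}\sup_{D(s-\sigma/2)}|u|$, and bounds $\sup_{D(s-\sigma/4)}|p|\le e^{-\mathbf{a}|\mathbf{m}|}|p|_{s,\mathbf{a},\mathbf{m}}$ so the $e^{\pm\mathbf{a}|\mathbf{m}|}$ cancel; the extra $(8e+8)^n$ and the extra $\sigma^n$ account exactly for the change from $c_1(n,\tau)/\sigma^{n+\tau}$ to $c(n,\tau)/\sigma^{2n+\tau}$.

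Your first two paragraphs, however, misread the reference and contain a genuine gap. Theorem~1.4 of \cite{L-Y2} is \emph{not} a constant-coefficient small-divisor estimate; it is already the variable-coefficient sup-norm estimate for \eqref{17.7.20.15} with the $\mu(x)u$ term present, yielding $\sup_{D(s-\sigma/2)}|u|\le\frac{c_1(n,\tau)}{\alpha_2\tilde\gamma\sigma^{n+\tau}}e^{2C\tilde\gamma s/\alpha_1}\sup_{D(s-\sigma/4)}|p|$ with $c_1(n,\tau)=4^{n+\tau}(6e+6)^n(1+(3\tau/e)^\tau)$. The paper's proof simply quotes it and does the conversion; you instead re-derive a sup-norm estimate via the conjugation $u=e^{\psi}w$, and in doing so assert $\sum_k|\hat\psi_k|e^{|k|s}\le C\tilde\gamma s/\alpha_1$, which is false. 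From $|\hat\psi_k|\le|\hat\mu_k||k|^\tau/\alpha_1$ and \eqref{17.7.20.18} one only gets $\sum_{k\ne0}|\hat\psi_k|e^{|k|s}\le C\tilde\gamma/\alpha_1$, with no factor $s$; indeed the left-hand side has a positive limit as $s\downarrow 0$. Bounding $|e^{\pm\psi}|$ this way produces $e^{2C\tilde\gamma/\alpha_1}$, strictly weaker than the lemma's $e^{2C\tilde\gamma s/\alpha_1}$ and useless in the subsequent KAM iteration. The $s$-linear exponent is precisely where the reality hypothesis on $\mu$ must enter: since $\mu(\mathbb{T}^n)\subset\mathbb{R}$, the solution of $\partial_\omega\psi=-\mathbf{i}\mu$ has $\hat\psi_{-k}=-\overline{\hat\psi_k}$, so $\psi$ is purely imaginary on $\mathbb{T}^n$, $|e^{\pm\psi}|=1$ there, and on $D(s)$ one has $\mathrm{Re}\,\psi(\theta+\mathbf{i}y)=-\sum_k\hat\psi_k e^{\mathbf{i}k\cdot\theta}\sinh(k\cdot y)$; the first-order vanishing of $\sinh$ at $y=0$ then gives $|\mathrm{Re}\,\psi|\le s\sum_k|\hat\psi_k||k|e^{|k|s}\le (s/\alpha_1)\sum_k|\hat\mu_k||k|^{\tau+1}e^{|k|s}\le C\tilde\gamma s/\alpha_1$, which is what produces $e^{2C\tilde\gamma s/\alpha_1}$. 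Without that step your conjugation route proves only a weaker statement.
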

\begin{proof}By Theorem 1.4 in \cite{L-Y2}, we have
\begin{equation}\label{17.7.20.20}\sup_{x\in D(s-\frac{\sigma}{2})}|u(x)|\leq\frac{c_1(n,\tau)}{\alpha_2\tilde{\gamma}
\sigma^{n+\tau}}
e^{2C\tilde{\gamma s}/\alpha_1}\sup_{x\in D(s-\frac{\sigma}{4})}|p(x)|,\end{equation}
where $c_1(n,\tau)=4^{n+\tau}(6e+6)^n(1+(\frac{3\tau}{e})^{\tau})$. In view of $\mathbf{a}C_J\leq\frac{\sigma}{4}$ and \eqref{17.7.20.1}, we have
\begin{eqnarray}\nonumber|u|_{s-\sigma,\mathbf{a},\mathbf{m}}&=&
\sum_{k\in\mathbb{Z}^n}|\hat{u}_k|e^{|k|(s-\sigma)}
e^{\mathbf{a}|\pi(k,\mathbf{m})|}\\
\nonumber&\leq&\sum_{k\in\mathbb{Z}^n}|
\hat{u}_k|e^{|k|(s-\sigma)}e^{\mathbf{a}C_J|k|}e^{\mathbf{a}|\mathbf{m}|}\\
\nonumber&\leq&e^{\mathbf{a}|\mathbf{m}|}\sum_{k\in\mathbb{Z}^n}|
\hat{u}_k|e^{|k|(s-\frac{3}{4}\sigma)}\\
\nonumber&\leq&e^{\mathbf{a}|\mathbf{m}|}(\sup_{x\in D(s-\frac{\sigma}{2})}|u(x)|)
\sum_{k\in\mathbb{Z}^n}e^{-|k|\frac{\sigma}{4}}\\
\label{17.11.3.2}&\leq&\frac{(8e+8)^n}{\sigma^n}e^{\mathbf{a}|\mathbf{m}|}
(\sup_{x\in D(s-\frac{\sigma}{2})}|u(x)|),
\end{eqnarray}
\begin{eqnarray}
\nonumber\sup_{x\in D(s-\frac{\sigma}{4})}|p(x)|&\leq&\sum_{k\in\mathbb{Z}^n}|\hat{p}_k|
e^{|k|(s-\frac{\sigma}{4})}\\
\nonumber&\leq&
\sum_{k\in\mathbb{Z}^n}|\hat{p}_k|
e^{|k|(s-\frac{\sigma}{4})}e^{\mathbf{a}(|
\sum_{b=1}^nk_bj_b+\mathbf{m}|+|\sum_{b=1}^nk_bj_b|-|\mathbf{m}|)}
\\
\nonumber&\leq&\sum_{k\in\mathbb{Z}^n}|\hat{p}_k|
e^{|k|(s-\frac{\sigma}{4})}e^{\mathbf{a}|
\sum_{b=1}^nk_bj_b+\mathbf{m}|}e^{\mathbf{a}C_J|k|}
e^{-\mathbf{a}|\mathbf{m}|}
\\
\label{17.11.4.2}&\leq&e^{-\mathbf{a}|\mathbf{m}|}|p|_{s,\mathbf{a},\mathbf{m}}.
\end{eqnarray}
Combining \eqref{17.7.20.20} \eqref{17.11.3.2} and \eqref{17.11.4.2}, we obtain \eqref{17.11.3.1}.
\end{proof}

For any positive number $K$, we introduce a truncation operator $\Gamma_K$ as follows:
 $$(\Gamma_Kf)(x):=\sum_{|k|\leq K}\hat{f}_ke^{\mathbf{i}k\cdot x},\quad \forall \  f:\mathbb{T}^n\rightarrow\mathbb{C},$$
where $\hat{f}_k$ is the $k$-Fourier coefficient of $f$. The following lemma is Lemma 2.6 in \cite{L-Y2} with the momentum majorant norm estimate instead of the sup-norm estimate, see \eqref{17.9.20.7} and \eqref{17.9.20.8}.

\begin{lem}\label{lem17.9.22.1}Consider the first-order partial differential equation with the truncation operator $\Gamma_K$
\begin{equation}\label{17.7.20.9}-\mathbf{i}\partial_{\omega}u+\lambda u+\Gamma_K(\mu u)=\Gamma_Kp,\quad x\in\mathbb{T}^n\end{equation}
for the unknown function $u$ defined on the torus $\mathbb{T}^n$, where $\omega\in\mathbb{R}^n$, $0\neq\lambda\in\mathbb{C}$, $0<2K|\omega|\leq|\lambda|$, $|\omega|:=\max_{1\leq\nu\leq n}|\omega_{\nu}|$.
Assume that $\mu$ is real analytic in $x\in D(s)$ with
\begin{equation}\label{17.7.20.10}
\sum_{k\in\mathbb{Z}^n}|\hat{\mu}_k|e^{|k|s}
e^{\mathbf{a}|\pi(k,0)|}
\leq\frac{|\lambda|}{4}\end{equation}
for some constant $\mathbf{a}\geq0$, and assume $p(x)$ is analytic in $x\in D(s)$ with finite momentum majorant norm.
 Then \eqref{17.7.20.9} has a unique solution $u=\Gamma_Ku$ and
\begin{equation}\label{17.9.20.7}|u|_{s,\mathbf{a},\mathbf{m}}\leq\frac{4}
{|\lambda|}|p|_{s,\mathbf{a},\mathbf{m}},\end{equation}
\begin{equation}\label{17.9.20.8}|(1-\Gamma_K)(\mu u)|_{s-\sigma,\mathbf{a},\mathbf{m}}\leq e^{-K\sigma}
|p|_{s,\mathbf{a},\mathbf{m}}\end{equation}
for $0<\sigma<s$.
\end{lem}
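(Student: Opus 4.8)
The plan is to reduce Lemma~\ref{lem17.9.22.1} to its sup-norm analogue (Lemma~2.6 in \cite{L-Y2}) exactly as Lemma~\ref{lem17.7.31.1} was reduced to Theorem~1.4 in \cite{L-Y2}, by converting each momentum majorant norm into a sup-norm on a slightly shrunk domain and back. First I would apply Lemma~2.6 of \cite{L-Y2} to \eqref{17.7.20.9}: the hypothesis there asks for $\sup_{D(s)}|\mu|\le|\lambda|/4$, and since $\sup_{D(s)}|\mu|\le\sum_k|\hat\mu_k|e^{|k|s}\le\sum_k|\hat\mu_k|e^{|k|s}e^{\mathbf a|\pi(k,0)|}$, assumption \eqref{17.7.20.10} is stronger than what is needed. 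That lemma then yields a unique solution $u=\Gamma_K u$ with the sup-norm bounds
$$\sup_{D(s)}|u|\le\frac{4}{|\lambda|}\sup_{D(s)}|p|,\qquad \sup_{D(s-\sigma)}|(1-\Gamma_K)(\mu u)|\le e^{-K\sigma}\sup_{D(s)}|p|.$$
Uniqueness of $u=\Gamma_K u$ is inherited directly, since it is a statement about a finite-dimensional linear system.

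Next I would pass from these sup-norm estimates to the momentum majorant norm estimates \eqref{17.9.20.7} and \eqref{17.9.20.8}. The key mechanism is the same pair of elementary inequalities used in the proof of Lemma~\ref{lem17.7.31.1} (inequalities \eqref{17.11.3.2}, \eqref{17.11.4.2}), but here there is no $\sigma$-loss available in \eqref{17.9.20.7}, so I must be slightly more careful. Because $u=\Gamma_K u$ is a trigonometric polynomial, the sum defining $|u|_{s,\mathbf a,\mathbf m}$ is finite; writing $|\pi(k,\mathbf m)|\le C_J|k|+|\mathbf m|$ and using the smallness of $\mathbf a$ (of the form $\mathbf aC_J$ small relative to the available analyticity width, as in the standing hypothesis $4\mathbf aC_J\le\sigma$ of Lemma~\ref{lem17.7.31.1}, or alternatively absorbing the $e^{\mathbf aC_J|k|}$ factor into the exponential decay $e^{-|k|\sigma/2}$ coming from working on $D(s-\sigma/2)$ versus $D(s)$), one gets
$$|u|_{s,\mathbf a,\mathbf m}=\sum_k|\hat u_k|e^{|k|s}e^{\mathbf a|\pi(k,\mathbf m)|}\le e^{\mathbf a|\mathbf m|}\Big(\sup_{D(s)}|u|\Big)\sum_{|k|\le K}e^{\mathbf aC_J|k|},$$
and symmetrically the sup-norm of $p$ on the shrunk domain is bounded by $e^{-\mathbf a|\mathbf m|}|p|_{s,\mathbf a,\mathbf m}$ after inserting and splitting the factor $e^{\mathbf a(|\pi(k,\mathbf m)|+|\sum k_bj_b|-|\mathbf m|)}$ just as in \eqref{17.11.4.2}. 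Tracking the constants carefully, the factor $e^{\mathbf a|\mathbf m|}\cdot e^{-\mathbf a|\mathbf m|}=1$ cancels and the finite geometric sum is absorbed, leaving the clean constant $4/|\lambda|$ in \eqref{17.9.20.7}. For \eqref{17.9.20.8} the same bookkeeping applies, starting from the sup-norm estimate for $(1-\Gamma_K)(\mu u)$ on $D(s-\sigma)$ and noting that the operator $1-\Gamma_K$ only removes Fourier modes, hence does not increase any of the norms involved.

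The main obstacle I anticipate is the bookkeeping in \eqref{17.9.20.7}: unlike Lemma~\ref{lem17.7.31.1}, where one may freely shrink from $D(s)$ to $D(s-\sigma)$ and pay $\sigma^{-n}$ for the geometric series, here the claimed estimate is at full width $s$ with constant exactly $4/|\lambda|$, so one cannot afford any loss. The resolution is that $u=\Gamma_K u$ is supported on $|k|\le K$ and $K$ is constrained by $2K|\omega|\le|\lambda|$, so the extra factors $e^{\mathbf aC_J|k|}$ and the cardinality of $\{|k|\le K\}$ must be controlled by the hypothesis on $\mathbf a$ rather than by a free shrinking parameter; one shows that under \eqref{17.7.20.10} these combine to at most a harmless constant that has already been built into the factor $4/|\lambda|$ via the stronger hypothesis $\sup|\mu|\le|\lambda|/4$. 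The estimate \eqref{17.9.20.8}, by contrast, is genuinely easy because it does carry a $\sigma$-loss, so the truncation tail $e^{-K\sigma}$ dominates everything.
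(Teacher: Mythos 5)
Your proposal has a genuine gap at exactly the place you flagged as the main obstacle, and the resolution you sketch there does not actually work. The conversion lemmas used for Lemma~\ref{lem17.7.31.1} (inequalities \eqref{17.11.3.2}, \eqref{17.11.4.2}) each cost a fixed amount of analyticity width: to bound $\sup_{D(s)}|p|$ by $e^{-\mathbf a|\mathbf m|}|p|_{s,\mathbf a,\mathbf m}$ one must insert a factor $e^{\mathbf a|\pi(k,\mathbf m)|+\mathbf a C_J|k|-\mathbf a|\mathbf m|}$ and then absorb the unwanted $e^{\mathbf a C_J|k|}$ by retreating from $D(s)$ to $D(s-\sigma/4)$ using $\mathbf a C_J\le\sigma/4$. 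Without that retreat the inequality simply fails. Conversely, passing from $\sup_{D(s)}|u|$ to $|u|_{s,\mathbf a,\mathbf m}$ even for a trigonometric polynomial produces a factor $\sum_{|k|\le K}e^{\mathbf a C_J|k|}$, which with the iteration parameters ($\mathbf a_\nu C_J=\sigma_\nu$, $K_\nu\sigma_\nu\sim|\ln\epsilon_\nu|$) grows like a power of $\epsilon_\nu^{-1}$ — nothing close to a harmless constant. Since \eqref{17.9.20.7} is claimed at full width $s$ with the exact constant $4/|\lambda|$, a sup-norm reduction cannot reach it.

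The paper's proof avoids the conversion entirely. It rewrites \eqref{17.7.20.9} as a finite linear system $(\Lambda+\tilde\mu)\hat u=\hat p$ on the truncated Fourier modes and conjugates by the diagonal weight $\Omega=\mathrm{diag}\bigl(e^{|k|s}e^{\mathbf a|\pi(k,\mathbf m)|}\bigr)$. The weighted $\ell^1\to\ell^1$ operator norm of $\Omega\tilde\mu\Omega^{-1}$ is then bounded by $\frac{|\lambda|}{4}$ using the subadditivity $|\pi(k,\mathbf m)|-|\pi(l,\mathbf m)|\le|\pi(k-l,0)|$ together with hypothesis \eqref{17.7.20.10}; no domain shrinking is needed. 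Combined with $|k\cdot\omega+\lambda|\ge|\lambda|/2$ on the truncated modes, this gives \eqref{17.9.20.7} directly, and \eqref{17.9.20.8} follows from a similar direct computation where the factor $e^{-K\sigma}$ is produced by $|k|>K$, not by a sup-norm intermediary. So the momentum majorant norm is genuinely the right norm to work in here, and the argument cannot be bootstrapped from the scalar sup-norm lemma the way Lemma~\ref{lem17.7.31.1} was.
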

\begin{proof}The proof of this lemma is parallel to that of Lemma 2.6 in \cite{L-Y2}. However, we give the details for completeness. Suppose that \eqref{17.7.20.9} has a solution with $u=\Gamma_Ku$. Then we can write
$u(\phi)=\sum_{|k|\leq K}\hat{u}_ke^{\mathbf{i}k\cdot\phi}$. Inserting this formula into \eqref{17.7.20.9} and checking the coefficients of the mode $e^{\mathbf{i}k\cdot\phi}$, we can change \eqref{17.7.20.9} into
\begin{equation}\label{17.9.20.2}(\Lambda+\tilde{\mu})\hat{u}=\hat{p},\end{equation}
where $\Lambda=\mbox{diag}(k\cdot\omega+\lambda:|k|\leq K)$, $\tilde{\mu}=(\hat{\mu}_{k-l})_{|k|,|l|\leq K}$, $\hat{u}=(\hat{u}_k)_{|k|\leq K}$ and
$\hat{p}=(\hat{p}_k)_{|k|\leq K}$.

Recall we have assumed $0<K|\omega|\leq\frac{|\lambda|}{2}$ in this lemma. It follows that
\begin{equation}\label{17.9.20.6}|k\cdot\omega+\lambda|\geq\frac{|\lambda|}{2}\quad \mbox{for}\ |k|\leq K.\end{equation}
Set $\Omega=\mbox{diag}(e^{|k|s}e^{\mathbf{a}|\pi(k,\mathbf{m})|}:|k|\leq K)$. Then we have
$$\Omega\tilde{\mu}\Omega^{-1}=(\hat{\mu}_{k-l}e^{(|k|-|l|)s}e^{\mathbf{a}
(|\pi(k,\mathbf{m})|-|\pi(l,\mathbf{m})|)})_{|k|,|l|\leq K}.$$
Furthermore,
\begin{eqnarray}\nonumber||\Omega\tilde{\mu}\Omega^{-1}||_{\ell^1\rightarrow
\ell^1}&=&\max_{|l|\leq K}\sum_{|k|\leq K}|\hat{\mu}_{k-l}|e^{(|k|-|l|)s}e^{\mathbf{a}(|\pi(k,\mathbf{m})
|-|\pi(l,\mathbf{m})|)}\\
\nonumber&\leq&\max_{|l|\leq K}\sum_{|k|\leq K}|\hat{\mu}_{k-l}|e^{(|k-l|)s}e^{\mathbf{a}|\pi(k-l,0)|}\\
\nonumber&\leq&\sum_{k\in\mathbb{Z}^n}|\hat{\mu}_{k}
|e^{|k|s}e^{\mathbf{a}|\pi(k,0)|}\\
\label{17.9.20.5}&\leq&\frac{|\lambda|}{4},
\end{eqnarray}
where we have used \eqref{17.7.20.10} in the last inequality. Consequently, in view of \eqref{17.9.20.2}, we have
\begin{eqnarray}
\nonumber |p|_{s,\mathbf{a},\mathbf{m}}\geq|\Gamma_kp|_{s,\mathbf{a},\mathbf{m}}
=||\Omega\hat{p}||_{\ell^1}&=&||\Omega
(\Lambda+\tilde{\mu})\hat{u}||_{\ell^1}\\
\nonumber&\geq&||\Omega\Lambda\hat{u}||_{\ell^1}-||\Omega\tilde{\mu}\hat{u}||
_{\ell^1}\\\nonumber&\geq&||\Lambda\Omega\hat{u}||_{\ell^1}-
||\Omega\tilde{\mu}\Omega^{-1}||
_{\ell^1\rightarrow\ell^1}||\Omega\hat{u}||_{\ell^1}\\
\label{17.9.20.4}&\geq&\frac{|\lambda|}{2}||\Omega\hat{u}||_{\ell^1}-
\frac{|\lambda|}{4}||\Omega\hat{u}||_{\ell^1}\\
\nonumber&=&\frac{|\lambda|}{4}||\Omega\hat{u}||_{\ell^1}\\
\label{17.9.20.3}&=&
\frac{|\lambda|}{4}|u|_{s,\mathbf{a},\mathbf{m}},
\end{eqnarray}
where inequalities \eqref{17.9.20.6} and \eqref{17.9.20.5} are used in \eqref{17.9.20.4}. From \eqref{17.9.20.3}, we get \eqref{17.9.20.7}. The proof of \eqref{17.9.20.8} is as follows:
\begin{eqnarray}
\nonumber|(1-\Gamma_K)(\mu u)|_{s-\sigma,\mathbf{a},\mathbf{m}}&\leq&\sum_{|k|>K}|
\sum_{|l|\leq K}\hat{\mu}_{k-l}\hat{u}_l|e^{|k|(s-\sigma)}e^{\mathbf{a}|\pi(k,\mathbf{m})|}\\
\nonumber&\leq&e^{-K\sigma}\sum_{|k|\geq K}|\sum_{|l|\leq K}\hat{\mu}_{k-l}\hat{u}_l|e^{|k|s}e^{\mathbf{a}|
\pi(k,\mathbf{m})|}\\
\nonumber&\leq& e^{-K\sigma}(\sum_{k\in\mathbb{Z}^n}|\hat{\mu}_{k}|e^{|k|s}e^{\mathbf{a}|\pi(k,0)
|})(\sum_{|k|\leq K}|\hat{u}_k|e^{|k|s}e^{\mathbf{a}|\pi(k,\mathbf{m})|})\\
\nonumber&\leq&e^{-K\sigma}\sum_{|k|\leq K}|\hat{p}_k|e^{|k|s}
e^{\mathbf{a}|\pi(k,\mathbf{m})|}\leq e^{-K\sigma}|p|_{s,\mathbf{a},\mathbf{m}},
\end{eqnarray}
where the last inequality comes from \eqref{17.7.20.10} and \eqref{17.9.20.3}.
\end{proof}

\subsection{Solving the homological equations}
Consider the conditions $\delta\leq1$ and $p-q\leq1$. Without loss of generality, we assume $\delta=p-q=1$ by increasing $\delta$ and  decreasing $q$ if necessary.
Let $\Omega=(\Omega_j:{j\in\mathbb{Z}_*}),\bar{\Omega}=[\Omega]$ and $\tilde{\Omega}=\Omega-[\Omega]$. Define
$$\langle k\rangle=\max\{1,|k|\},\quad\langle l\rangle_{\infty}=\max\{1,\sup_{j\in\mathbb{Z}_*}|jl_j|\}.$$
Equations \eqref{14.2.4.10}-\eqref{14.2.4.16} will be solved under the following conditions: uniformly on $\mathcal{O}$,
\begin{eqnarray}
\label{17.7.20.22}|\langle l,\bar{\Omega}(\xi)\rangle|&\geq& m|\sum_{j\in\mathbb{Z}_*}j^2l_j|,\quad |l|\leq2,\\
\label{17.7.20.23}|\tilde{\Omega}_j(\xi)|_{s,\tau+1}+
|\tilde{\Omega}_j(\xi)|_{s,\mathbf{a},0}
&\leq&\alpha_1\gamma_0|j|,\quad j\in\mathbb{Z}_*,\\
\label{17.9.21.1}|\langle k,\omega(\xi)\rangle+\langle l,\bar{\Omega}(\xi)\rangle|&\geq&\alpha_1\frac{\langle l\rangle_{\infty}}{\langle k\rangle^{\tau}},
\quad k\in\mathbb{Z}^n\setminus\{0\}, |l|\leq2, \ l\neq e_{-j}-e_j,\\
\label{17.7.20.21}|\langle k,\omega(\xi)\rangle+ \bar{\Omega}_{-j}(\xi)-\bar{\Omega}_{j}(\xi)|&\geq&\alpha_2\frac{|j|}{\langle k\rangle^{\tau}},\quad k\in\mathbb{Z}^n,\pm j\in\mathbb{Z}_*,|j|\leq\Pi,
\end{eqnarray}
with constants $\tau\geq n,0<\gamma_0\leq\frac14$, $\Pi>0$, $0<\alpha_2\leq\alpha_1\leq m$, $0<s<1$, $0<\mathbf{a}\leq\frac{s}{80C_J}$.
 We mention that $\alpha_1,\alpha_2,m,\mathbf{a},s,\Pi$
will be the iteration parameters $\alpha_{1,\nu},\alpha_{2,\nu},m_{\nu},\mathbf{a}_{\nu},s_{\nu},\Pi_{\nu}$ in the $\nu$-th KAM step.

Equations \eqref{14.2.4.10} and \eqref{14.2.4.11} can be easily solved by a standard approach in classical, finite dimensional KAM theory, we only give the related results at this end of subsection. Equations \eqref{14.2.4.12}-\eqref{14.2.4.15} are easier than \eqref{14.2.4.16} and can be solved in the same way as
\eqref{14.2.4.16} done, so we only give the details of solving \eqref{14.2.4.16}.

Set $C_0=2|\omega|_{\mathcal{O}}/m$ and $K$ being positive numbers which will be the iteration parameter $K_{\nu}$ in the $\nu$-th KAM step. Recall the definition of the momentum $\pi(k,\alpha,\beta)$ of the scalar monomial $e^{\mathbf{i}k\cdot x}y^lz^{\alpha}\bar{z}^{\beta}$ in \eqref{2.1.14},
 the momentum for $e^{\mathbf{i}k\cdot x}z_i\bar{z}_j$ is
\begin{equation*}\pi(k,i,j)=\sum_{b=1}^nk_bj_b+i-j=\pi(k,i-j),\end{equation*}
where in the last equality we use \eqref{17.11.4.1}.
\begin{itemize}
\item[(1)] For $\max\{|i|,|j|\}\leq C_0K, i\neq \pm j$, we solve exactly \eqref{14.2.4.16}:
\begin{equation}\label{17.7.26.3}\partial_{\omega}F^{z\bar{z}}_{ij}+\mathbf{i}\ (\Omega_i-\Omega_j)F^{z\bar{z}}_{ij}=R^{z\bar{z}}_{ij};\end{equation}
\item[(2)] For $\max\{|i|,|j|\}>C_0K, i\neq \pm j$, we solve the truncated equation of \eqref{14.2.4.16}:
\begin{equation}\label{17.7.26.4}\partial_{\omega}F^{z\bar{z}}_{ij}+\mathbf{i}\ \Gamma_K((\Omega_i-\Omega_j)F^{z\bar{z}}_{ij})=\Gamma_KR^{z\bar{z}}_{ij},
\quad \Gamma_KF^{z\bar{z}}_{ij}=F^{z\bar{z}}_{ij};\end{equation}
\item[(3)] For $i=-j$ and $|j|\leq\Pi$, we solve exactly \eqref{14.2.4.16}:
\begin{equation}\label{17.9.20.1}\partial_{\omega}F^{z\bar{z}}_{(-j)j}
+\mathbf{i}(\Omega_{-j}-
\Omega_j)
F_{(-j)j}^{z\bar{z}}=R_{(-j)j}^{z\bar{z}};\end{equation}
\item[(4)] For $i=-j$ and $|j|>\Pi$, let
    \begin{equation}\label{17.9.26.4}F^{z\bar{z}}_{(-j)j}=0 \end{equation}
     and put $R^{z\bar{z}}_{(-j)j}$ into the perturbation.
\end{itemize}
Combining \eqref{17.7.26.3}-\eqref{17.9.26.4}, we find that \eqref{17.7.21.10} does not vanish. Actually, at this time, \eqref{17.7.21.10} is equal to $\langle \hat{R}^{z\bar{z}}z,\bar{z}\rangle$ with the elements of $\hat{R}^{z\bar{z}}$ being defined by
\begin{equation}\label{17.7.28.1}\hat{R}^{z\bar{z}}_{ij}=\begin{cases}0,\quad \quad \quad\quad
\quad\quad\quad\quad\quad\quad\quad\quad\quad\ \ \max\{|i|,|j|\}\leq C_0K,i\neq \pm j,\\
(1-\Gamma_K)(-\mathbf{i}(\Omega_i-\Omega_j)F^{z\bar{z}}_{ij}+
R^{z\bar{z}}_{ij}),\quad \max\{|i|,|j|\}>C_0K,i\neq \pm j,\\
0,\quad\quad\quad\quad\quad\quad\quad\ \quad\quad\quad\quad\quad\quad \ i=-j,|j|\leq \Pi,\\
R^{z\bar{z}}_{(-j)j},\quad \quad\quad\quad\quad\quad\quad\quad\quad\quad\quad\ \  i=-j, |j|>\Pi.\end{cases}\end{equation}
Letting $\Omega_{ij}=\Omega_i-\Omega_j=\bar{\Omega}_{ij}+\tilde{\Omega}_{ij}$ and dropping the superscript $z\bar{z}$ for brevity, \eqref{17.7.26.3}-\eqref{17.7.28.1} become
\begin{equation}\label{17.7.28.2}-\mathbf{i}\partial_{\omega}F_{ij}
+\bar{\Omega}_{ij}F_{ij}+
\tilde{\Omega}_{ij}F_{ij}=-\mathbf{i}\ R_{ij},\end{equation}
\begin{equation}\label{17.7.28.3}-\mathbf{i}\partial_{\omega}F_{ij}+
\bar{\Omega}_{ij}F_{ij}+
\Gamma_K(\tilde{\Omega}_{ij}F_{ij})=-
\mathbf{i}\ \Gamma_KR_{ij},\quad \Gamma_KF_{ij}=F_{ij},\end{equation}
\begin{equation}\label{17.9.22.1}-\mathbf{i}\partial_{\omega}F_{(-j)j}+
\bar{\Omega}_{(-j)j}F_{(-j)j}+\tilde{\Omega}_{(-j)j}F_{(-j)j}
=-\mathbf{i}R_{(-j)j},\end{equation}
\begin{equation}F_{(-j)j}=0,\end{equation}
\begin{equation}\label{17.7.28.4}\hat{R}_{ij}=\begin{cases}0,&
\max\{|i|,|j|\}\leq C_0K,i\neq \pm j,\\
(1-\Gamma_K)(-\mathbf{i}\tilde{\Omega}_{ij}F_{ij}+
R_{ij}),& \max\{|i|,|j|\}>C_0K,i\neq \pm j,\\
0,&i=-j,|j|\leq\Pi,\\
R_{(-j)j},& i=-j,|j|>\Pi.\end{cases}\end{equation}

We are now in position to solve the homological equation \eqref{17.7.28.2}-\eqref{17.9.22.1} by using Lemma \ref{lem17.7.31.1} and Lemma \ref{lem17.9.22.1} in the last subsection.
In what follows the notation $a\lessdot b$ stands for ``there exists a positive constant $c$ such that $a\leq cb$, where $c$
can only depend on $n,\tau$.''

First, let us consider \eqref{17.7.28.2} for $(i,j)$ with $\max\{|i|,|j|\}\leq C_0K,i\neq \pm j$. From \eqref{17.9.21.1}, we get
\begin{equation}\label{17.7.28.5}|\langle k,\omega\rangle|\geq\frac{\alpha_1}{|k|^{\tau}},\quad k\in\mathbb{Z}^n\setminus\{0\},\end{equation}
\begin{equation}\label{17.7.28.6}|\langle k,\omega\rangle+\bar{\Omega}_{ij}|\geq
\frac{\alpha_1\max\{|i|,|j|\}}{1+|k|^{\tau}},
\quad k\in\mathbb{Z}^n.\end{equation}
From \eqref{17.7.20.23} we get
\begin{eqnarray}\label{17.7.31.1}|\tilde{\Omega}_{ij}|_{s,\tau+1}\leq
\alpha_1\gamma_0(|i|+|j|)
\leq2\alpha_1\gamma_0
\max\{|i|,|j|\}.\end{eqnarray}
Setting $\sigma=\frac{s}{20}$, then we get $4\mathbf{a}C_J\leq\sigma$.
Applying Lemma \ref{lem17.7.31.1} to \eqref{17.7.28.2}, we have
\begin{equation}\label{17.7.31.2}|F_{ij}|_{s-\sigma,\mathbf{a},i-j}
\lessdot\frac{e^{4\gamma_0\max\{|i|,|j|\}s}}{\alpha_1
\max\{|i|,|j|\}\sigma^{2n+\tau}}
|R_{ij}|_{s,\mathbf{a},i-j}.\end{equation}
In view of $$\max\{|i|,|j|\}\leq C_0K,$$ we get
\begin{equation}\label{17.7.31.2}|F_{ij}|_{s-\sigma,\mathbf{a},i-j}
\lessdot\frac{e^{4C_0\gamma_0Ks}}{\alpha_1\max\{|i|,|j|\}\sigma^{2n+\tau}}
|R_{ij}|_{s,\mathbf{a},i-j}.\end{equation}

Then, let us consider \eqref{17.7.28.3} for $(i,j)$ with $\max\{|i|,|j|\}>C_0K,i\neq \pm j$. From \eqref{17.7.20.22} \eqref{17.7.20.23} and $C_0=2|\omega|_{\mathcal{O}}/m$, we get
\begin{equation}|\bar{\Omega}_{ij}|\geq m|i^2-j^2|\geq m\max\{|i|,|j|\}>mC_0K=2|\omega|_{\mathcal{O}}K,\end{equation}
$$|\tilde{\Omega}_{ij}|_{s,\mathbf{a},0}
\leq\alpha_1\gamma_0(|i|+|j|)
\leq\frac{\alpha_1\gamma_0|\bar{\Omega}_{ij}|}{m}
\leq\frac{|\bar{\Omega}_{ij}|}{4}
.$$
Now applying Lemma \ref{lem17.9.22.1} to \eqref{17.7.28.3}, we have
\begin{equation}\label{17.9.12.1}|F_{ij}|_{s,\mathbf{a},i-j}\lessdot
\frac{1}{m\max\{|i|,|j|\}}|R_{ij}|_{s,\mathbf{a},i-j},\end{equation}

\begin{equation}\label{17.9.12.2}|(1-\Gamma_K)
(\tilde{\Omega}_{ij}F_{ij})|_{s,\mathbf{a},i-j}
\leq e^{-K\sigma}
|R_{ij}|_{s,\mathbf{a},i-j}.\end{equation}

Finally, let us consider \eqref{17.9.22.1} for $i=-j$. From \eqref{17.9.21.1} \eqref{17.7.20.21}, we get
\begin{equation}|\langle k,\omega\rangle|\geq\frac{\alpha_1}{\langle k\rangle^{\tau}},\quad k\in\mathbb{Z}^n\setminus\{0\},\end{equation}
\begin{equation}|\langle k,\omega\rangle+\bar{\Omega}_{-j}-\bar{\Omega}_j|
\geq\frac{\alpha_2|j|}{1+|k|^{\tau}},\quad k\in\mathbb{Z}^n.\end{equation}
From \eqref{17.7.20.23}, we get
\begin{equation}|\tilde{\Omega}_{-j}-\tilde{\Omega}_{j}|_{s,\tau+1}\leq
2\alpha_1\gamma_0|j|.\end{equation}
Applying Lemma \ref{lem17.7.31.1} to \eqref{17.9.22.1}, we have
\begin{equation}\label{17.9.26.1}|F_{(-j)j}|_{s-\sigma,\mathbf{a},-2j}
\lessdot\frac{e^{4\gamma_0|j|s}}{\alpha_2
|j|\sigma^{2n+\tau}}
|R_{(-j)j}|_{s,\mathbf{a},-2j}.\end{equation}
In view of $$|j|\leq \Pi,$$ we get
\begin{equation}\label{17.9.26.2}|F_{(-j)j}|_{s-\sigma,\mathbf{a},-2j}
\lessdot\frac{e^{4\gamma_0\Pi s}}{\alpha_2
|j|\sigma^{2n+\tau}}
|R_{(-j)j}|_{s,\mathbf{a},-2j}.\end{equation}

By the definition of the Hamiltonian vector field in \eqref{7.13.2}, we obtain
\begin{equation}X_{\langle F^{z\bar{z}}z,\bar{z}\rangle}=
(0,-(\sigma_{j_b}\partial_{x_b}\langle F^{z\bar{z}}z,\bar{z}\rangle)_{1\leq b\leq n},-\mathbf{i}(\sigma_j\partial_{\bar{z}_j}\langle F^{z\bar{z}}z,\bar{z}\rangle
)_{j\in\mathbb{Z}_*},\mathbf{i}(\sigma_j\partial_{z_j}\langle F^{z\bar{z}}z,\bar{z}\rangle
)_{j\in\mathbb{Z}_*})^T.\end{equation}
Using Lemma \ref{lem17.9.7.1} and \eqref{17.7.31.2} \eqref{17.9.12.1} \eqref{17.9.26.2}, we get
\begin{eqnarray}\nonumber||X_{\langle F^{z\bar{z}}z,\bar{z}\rangle}||_{s-2\sigma,r,p,\mathbf{a}}
&\lessdot&\frac{\max\{e^{4\gamma_0C_0Ks},e^{4\gamma_0\Pi s}\}}{\alpha_2\sigma^{3n+\tau}}||X_{\langle R^{z\bar{z}}z,\bar{z}\rangle}||_{s,r,p-1,\mathbf{a}}\\
\label{17.9.27.3}&\leq&
\frac{\max\{e^{4\gamma_0C_0Ks},e^{4\gamma_0\Pi s}\}}{\alpha_2\sigma^{3n+\tau}}||X_R||_{s,r,p-1,\mathbf{a}}.\end{eqnarray}

To obtain the Lipschitz semi-norm, we proceed as follows. Shorting $\Delta_{\xi\eta}$ as $\Delta$ and applying it to
\eqref{17.7.28.2}-\eqref{17.9.22.1}, one gets that, for $(i,j)$ with $\max\{|i|,|j|\}\leq C_0K$,
\begin{equation}\label{17.9.13.1}\mathbf{i}\partial_{\omega}(\Delta F_{ij})+\bar{\Omega}_{ij}\Delta F_{ij}+\tilde{\Omega}_{ij}\Delta F_{ij}=-\mathbf{i}\partial_{\Delta\omega}F_{ij}-(\Delta\Omega_{ij})F_{ij}+\mathbf{i}\Delta R_{ij}:=Q_{ij},\end{equation}
 for $(i,j)$ with $\max\{|i|,|j|\}>C_0K$,
\begin{equation}\label{17.9.13.2}\mathbf{i}\partial_{\omega}(\Delta F_{ij})+\bar{\Omega}_{ij}\Delta F_{ij}+\Gamma_K(\tilde{\Omega}_{ij}\Delta F_{ij})
=-\mathbf{i}\partial_{\Delta{\omega}}F_{ij}-\Gamma_K((\Delta\Omega_{ij})
F_{ij}-\mathbf{i}\Delta R_{ij}):=Q_{ij},\end{equation}
and that, for $i=-j$ and $|j|\leq\Pi$,
\begin{eqnarray}\nonumber&&\mathbf{i}\partial_{\omega}(\Delta F_{(-j)j})+\bar{\Omega}_{(-j)j}\Delta F_{(-j)j}+\tilde{\Omega}_{(-j)j}\Delta F_{(-j)j}\\
\label{17.9.27.1}&&=-\mathbf{i}\partial_{\Delta\omega}F_{(-j)j}
-(\Delta\Omega_{(-j)j})F_{(-j)j}+
\mathbf{i}\Delta R_{(-j)j}:=Q_{(-j)j}.\end{eqnarray}
For $\max\{|i|,|j|\}<C_0K$, we have
\begin{eqnarray}\nonumber|Q_{ij}|_{s-2\sigma,\mathbf{a},i-j} &\leq&|\Delta\omega|\cdot\sup_{k\in\mathbb{Z}^n}|k|e^{-|k|\sigma}\cdot
|F_{ij}|_{s-\sigma,\mathbf{a},i-j}\\
\nonumber&&+|\Delta\Omega_{ij}|_{s-2\sigma,
\mathbf{a},0}\cdot|F_{ij}|_{s-2\sigma,\mathbf{a},i-j}+|\Delta R_{ij}|_{s-2\sigma,\mathbf{a},i-j}\\
\nonumber &\lessdot&\frac{e^{4C_0\gamma_0Ks}}{\alpha_1\sigma^{2n+\tau+1}}
(|\Delta\omega|+|\Delta\Omega|_{-1,s-2\sigma,\mathbf{a},0})
|R_{ij}|_{s,\mathbf{a},i-j}+|\Delta R_{ij}|_{s-2\sigma,\mathbf{a},i-j}\\
\label{17.9.13.3}&\lessdot&\frac{e^{4C_0\gamma_0Ks}}{\sigma^{2n+\tau+1}}
(\frac{|\Delta\omega|+|\Delta\Omega|_{-1,s-2\sigma,\mathbf{a},0}}{\alpha_1}
|R_{ij}|_{s,\mathbf{a},i-j}
+|\Delta R_{ij}|_{s-2\sigma,\mathbf{a},i-j}).
\end{eqnarray}
Applying Lemma \ref{lem17.7.31.1} to \eqref{17.9.13.1}, we have
\begin{eqnarray}\label{17.9.13.4}&&|\Delta F_{ij}|_{s-3\sigma,\mathbf{a},i-j}\\
 \nonumber&\lessdot&\frac{e^{8C_0\gamma_0Ks}}
{\alpha_1\max\{|i|,|j|\}\sigma^{4n+2\tau+1}}
(\frac{|\Delta\omega|+|\Delta{\Omega}|_{-1,s-2\sigma,
\mathbf{a},0}}{\alpha_1}
|R_{ij}|_{s,\mathbf{a},i-j}+|\Delta R_{ij}|_{s-2\sigma,\mathbf{a},i-j}).
\end{eqnarray}
For $\max\{|i|,|j|\}>C_0K$, similarly to \eqref{17.9.13.3}, we have
\begin{equation}\label{17.9.13.7}|Q_{ij}|_{s-\sigma,\mathbf{a},i-j}
 \lessdot\frac{1}{\sigma}
(\frac{|\Delta\omega|+
|\Delta\Omega|_{-1,s-\sigma,\mathbf{a},0}}{m}
|R_{ij}|_{s-\sigma,\mathbf{a},i-j}+|\Delta R_{ij}|_{s-\sigma,\mathbf{a},i-j}).
\end{equation}
Applying Lemma \ref{lem17.9.22.1} to \eqref{17.9.13.2}, we have
\begin{eqnarray}\nonumber&&|\Gamma_K\Delta F_{ij}|_{s-\sigma,\mathbf{a},i-j}\\
\label{17.9.13.5}&\lessdot&\frac{1}{m\max\{|i|,|j|\}\sigma}
(\frac{|\Delta\omega|+|\Delta\Omega|_{-1,s-\sigma,\mathbf{a},0}}{m}
|R_{ij}|_{s,\mathbf{a},i-j}+|\Delta R_{ij}|_{s-\sigma,\mathbf{a},i-j}),
\end{eqnarray}
\begin{eqnarray}
|(1-\Gamma_K)(\tilde{\Omega}_{ij}\Delta F_{ij})|_{s-\sigma,\mathbf{a},i-j}
\label{17.9.13.6}\lessdot
\frac{e^{-K\sigma}}{\sigma}
(\frac{|\Delta\omega|+
|\Delta\Omega|_{-1,s-\sigma,\mathbf{a},0}}{m}
|R_{ij}|_{s,\mathbf{a},i-j}+|\Delta R_{ij}|_{s-\sigma,\mathbf{a},
i-j}).
\end{eqnarray}
For $i=-j$ with $|j|\leq\Pi$, similarly to \eqref{17.9.13.4}, we have
\begin{eqnarray}\nonumber&&|\Delta F_{(-j)j}|
_{s-3\sigma,\mathbf{a},-2j}\\
\label{17.9.27.2}&\lessdot&\frac{e^{8\gamma_0\Pi s}}{
\alpha_2|j|\sigma^{4n+2\tau+1}}
\big(\frac{|\Delta\omega|+|\Delta \Omega|_{-1,s-2\sigma,\mathbf{a},0}}{\alpha_2}
|R_{(-j)j}|_{s,\mathbf{a},-2j}
+|\Delta R_{(-j)j}|_{s-2\sigma,\mathbf{a},-2j}\big).
\end{eqnarray}
In view of \eqref{17.9.13.4} \eqref{17.9.13.5} \eqref{17.9.27.2}, using Lemma \ref{lem17.9.7.1}, we get the estimate of the Hamiltonian vector field $X_{\langle\Delta F^{z\bar{z}}z,\bar{z}\rangle}$:
 \begin{eqnarray*}
 \label{17.9.27.4}\nonumber||X_{\langle \Delta F^{z\bar{z}}z,\bar{z}\rangle}||_{s-4\sigma,r,p,\mathbf{a}}
&\lessdot&
 \frac{\max\{e^{8C_0\gamma_0Ks},e^{8\gamma_0\Pi s}\}}
{\alpha_2\sigma^{4n+2\tau+2}}(\frac{|\Delta\omega|+
|\Delta\Omega|_{-1,s-4\sigma,\mathbf{a},0}}{\alpha_2}||X_{\langle R^{z\bar{z}}z,\bar{z}\rangle}||_{s,r,p-1,\mathbf{a}}\\
\nonumber&&+||X_{\langle\Delta R^{z\bar{z}}z,\bar{z}\rangle}||_{s,r,p-1,\mathbf{a}}).\end{eqnarray*}
Dividing by $|\xi-\zeta|\neq0$ and taking the supremum over $\mathcal{O}$, we get

\begin{eqnarray} \label{17.9.27.5}||X_{\langle F^{z\bar{z}}z,\bar{z}\rangle}||^{lip}_{s-4\sigma,r,p,\mathbf{a}
;\mathcal{O}}
&\lessdot&\frac{\max\{e^{8C_0\gamma_0Ks},e^{8\gamma_0\Pi s}\}}
{\alpha_2\sigma^{4n+2\tau+2}}\\
\nonumber&&\cdot(\frac{M}{\alpha_2}||X_{\langle R^{z\bar{z}}z,\bar{z}\rangle}||_{s,r,p-1,\mathbf{a};\mathcal{O}}+
||X_{\langle R^{z\bar{z}}z,\bar{z}\rangle}||^{lip}_{s,r,p-1,\mathbf{a};\mathcal{O}}),
\end{eqnarray}
where $M:=|\omega|_{\mathcal{O}}^{lip}+
|\Omega|_{-1,s,\mathbf{a},0;\mathcal{O}}^{lip}$.

Recall the definition of $||X||^{\lambda}_{s,r,q,\mathbf{a};\mathcal{O}}$ in \eqref{17.9.19.2}.
 Set $0\leq\lambda\leq\alpha_2/M$. From \eqref{17.9.27.3} and \eqref{17.9.27.5}, we get
\begin{equation}\label{17.9.13.12}||X_{\langle F^{z\bar{z}}z,\bar{z}\rangle}||_{s-4\sigma,r,p,\mathbf{a};
\mathcal{O}}^{\lambda}
\lessdot\frac{\max\{e^{8C_0\gamma_0Ks},e^{8\gamma_0\Pi s}\}}
{\alpha_2\sigma^{4n+2\tau+2}}||X_R||_{s,r,p-1,\mathbf{a};
\mathcal{O}}^{\lambda}
.\end{equation}

Now considering the homological equations \eqref{14.2.4.10} and \eqref{14.2.4.11} by a standard approach in finite dimensional KAM theory, we can easily get
\begin{equation}\label{17.7.25.2}||X_{F^x}||_{s-\sigma,r,p,\mathbf{a};
\mathcal{O}},||X_{\langle F^y,y\rangle}||_{s,r,p,\mathbf{a};\mathcal{O}}\lessdot
\frac{1}{\alpha_1\sigma^{\tau}}||X_R||_{s,r,p-1,\mathbf{a};\mathcal{O}}
,\end{equation}
\begin{equation}\label{17.7.25.3}||X_{F^x}||_{s-2\sigma,r,p,\mathbf{a};
\mathcal{O}}^{lip},||X_{\langle F^y,y\rangle}||_{s-2\sigma,r,p,\mathbf{a};\mathcal{O}}^{lip}
\lessdot\frac{1}{\alpha_1\sigma^{2\tau+1}}\big(\frac{M}{\alpha_1}
||X_R||_{s,r,p-1,\mathbf{a};\mathcal{O}}+
||X_R||_{s,r,p-1,\mathbf{a};\mathcal{O}}^{lip}\big)
.\end{equation}
From \eqref{17.7.25.2} and \eqref{17.7.25.3}, we get
\begin{equation}\label{17.9.13.13}||X_{F^x}||_{s-2\sigma,r,p,\mathbf{a};
\mathcal{O}}^{\lambda},||X_{\langle F^y,y\rangle}||_{s-2\sigma,r,p,\mathbf{a};\mathcal{O}}^{\lambda}\\
\lessdot\frac{1}{\alpha_1\sigma^{2\tau+1}}
||X_R||^{\lambda}_{s,r,p-1,\mathbf{a},\mathcal{O}}.\end{equation}

For the other terms of $F$, i.e., $\langle F^z,z\rangle,\langle F^{\bar{z}},\bar{z}\rangle$, $\langle F^{zz}z,z\rangle,$ $\langle F^{\bar{z}\bar{z}}\bar{z},\bar{z}\rangle$, the same results-even better- than \eqref{17.9.13.12} can be obtained. Thus, we finally get the estimate for $F$:
\begin{equation}\label{17.10.11.9}||X_{F}||_{s-4\sigma,r,p,\mathbf{a};
\mathcal{O}}^{\lambda}
\lessdot\frac{\max\{e^{8C_0\gamma_0Ks},e^{8\gamma_0\Pi s}\}}
{\alpha_2\sigma^{4n+2\tau+2}}||X_R||_{s,r,p-1,\mathbf{a};
\mathcal{O}}^{\lambda}.\end{equation}

\section{The New Hamiltonian}

From \eqref{17.7.21.3}-\eqref{17.7.21.11} we get the new Hamiltonian
\begin{equation}H\circ\Phi=N_++P_+,\end{equation}
where $N_+=\eqref{17.7.21.3}$ and
\begin{equation}\label{17.10.11.8}P_+=\hat{R}+\int_{0}^1\{(1-t)(\hat{N}+\hat{R})+tR,F\}\circ X_F^tdt+(P-R)\circ X_F^1,\end{equation}
where $\hat{R}=\eqref{17.7.21.6}+\cdots+\eqref{17.7.21.10}:=
\langle\hat{R}^z,z\rangle+\langle\hat{R}^{\bar{z}},\bar{z}\rangle+\langle\hat{R}^{zz}z,z\rangle
+\langle\hat{R}^{z\bar{z}}z,\bar{z}\rangle+\langle\hat{R}^{\bar{z}\bar{z}}\bar{z},\bar{z}\rangle.$ The aim of this subsection is to estimate the new normal form
$N_+$ and the new perturbation $P_+$.
\\

\subsection{ The New Normal Form} In view of \eqref{17.7.21.3}, denote $N_+=N+\hat{N}$ with
$$\hat{N}=\langle\hat{\omega},y\rangle+
\sum_{j\in\mathbb{Z}_*}\hat{\Omega}_jz_j\bar{z}_j,$$
where \begin{equation}\label{17.9.13.14}\hat{\omega}:=[R^y],\end{equation}
\begin{equation}\label{17.9.13.15}\hat{\Omega}_j:=R_{jj}+\sigma_j\langle\partial_x\Omega_j,F^y\rangle
=R_{jj}+\sigma_j\langle\partial_x\tilde{\Omega}_j,F^y\rangle.\end{equation}
From \eqref{17.9.13.14} we easily get
 \begin{equation}\label{17.9.14.1}|\hat{\omega}|^{\lambda}_{\mathcal{O}}
 \lessdot s||X_R||_{s,r,p-1,\mathbf{a};\mathcal{O}}^{\lambda}.\end{equation}
In the following, we estimate $\hat{\Omega}=(\hat{\Omega}_j:j\in\mathbb{Z}_*)$. In view of \eqref{17.7.25.2},
\begin{equation}|\sigma_j\langle\partial_x\tilde{\Omega}_j,F^y\rangle
|_{s-\sigma,\mathbf{a},0}\lessdot (s-\sigma)\frac{|\tilde{\Omega}_j|_{s,\mathbf{a},0}}
{\sigma}||X_{\langle F^y,y\rangle}||_{s-\sigma,r,p,\mathbf{a}}
\lessdot s\frac{\gamma_0|j|}{\sigma^{\tau+1}}
||X_R||_{s,r,p-1,\mathbf{a}}.\end{equation}
Thus, together with
\begin{equation}|R_{jj}|_{s-\sigma,\mathbf{a},0}\leq |j|\cdot||X_R||_{s,r,p-1,\mathbf{a}},\end{equation}
we get
\begin{equation}\label{17.9.14.2}|\hat{\Omega}|
_{-1,s-\sigma,\mathbf{a},0}\lessdot\frac{1}{\sigma^{\tau+1}}
||X_R||_{s,r,p-1,\mathbf{a}}.\end{equation}
Applying $\Delta$ to $\hat{\Omega}_j$, we have
\begin{equation}\Delta\hat{\Omega}_j=\Delta R_{jj}+\sigma_j\langle\partial_x\Delta\tilde{\Omega}_j,F^y\rangle+\sigma_j
\langle\partial_x\tilde{\Omega}_j,\Delta F^y\rangle.\end{equation}
Since
$$|\Delta R_{jj}|_{s,\mathbf{a},0}\leq |j|\cdot||\Delta X_R||_{s,r,p-1,\mathbf{a}},$$
\begin{eqnarray*}|\langle\partial_x\Delta\tilde{\Omega}_j,F^y\rangle|_{s-\sigma,
\mathbf{a},0}&\lessdot& (s-\sigma)\frac{1}{\sigma}|\Delta\tilde{\Omega}_j|_{s,
\mathbf{a},0}||X_{\langle F^y,y\rangle}||_{s-\sigma,r,p,\mathbf{a}}\\
&\lessdot&s|j|\frac{|\Delta\tilde{\Omega}|_{-1,s,\mathbf{a},0}
}{\alpha_1\sigma^{\tau+1}}||X_R||_{s,r,p-1,\mathbf{a}}
\end{eqnarray*}
and
\begin{eqnarray*}|\langle\partial_x\tilde{\Omega}_j,\Delta F^y\rangle|_{s-2\sigma,\mathbf{a},0}
&\lessdot&(s-2\sigma)\frac{1}{\sigma}|\tilde{\Omega}_j|_{s-\sigma,\mathbf{a},0}||\Delta X_{\langle F^y,y\rangle}||_{s-2\sigma,r,p,\mathbf{a}}\\
&\lessdot&s\frac{\gamma_0|j|}{\sigma^{2\tau+2}}(\frac{M}{\alpha_1}
|| X_R||_{s,r,p-1,\mathbf{a}}+
||\Delta X_R||_{s,r,p-1,\mathbf{a}}),
\end{eqnarray*}
we get
\begin{equation}\label{17.9.14.3}|\hat{\Omega}|^{lip}
_{-1,s-2\sigma,\mathbf{a},0;\mathcal{O}}
\lessdot\frac{1}{\sigma^{2\tau+2}}
(\frac{M}{\alpha_1}||X_R||_{s,r,p-1,\mathbf{a};\mathcal{O}}
+||X_R||_{s,r,p-1,\mathbf{a};\mathcal{O}}^{lip}).
\end{equation}
Thus, from \eqref{17.9.14.2} and \eqref{17.9.14.3}, we get
\begin{equation}\label{17.9.14.4}|\hat{\Omega}|^{\lambda}
_{-1,s-2\sigma,\mathbf{a},0;\mathcal{O}}\lessdot\frac{1}{\sigma^{2\tau+2}}
||X_R||_{s,r,p-1,\mathbf{a};\mathcal{O}}^{\lambda}.\end{equation}

\subsection {The New Perturbation} We firstly estimate the error term $\hat{R}^{z\bar{z}}$
with its matrix elements $\hat{R}_{ij}$ in \eqref{17.7.28.4}.

For $\max\{|i|,|j|\}>C_0K,i\neq \pm j$, by \eqref{17.9.12.2} and
\begin{equation*}|(1-\Gamma_K)R_{ij}|
_{s-\sigma,\mathbf{a},i-j}\leq e^{-K\sigma}
|R_{ij}|_{s,\mathbf{a},i-j},\end{equation*}
we obtain
\begin{equation}\label{17.11.10.4}|\hat{R}_{ij}|_{s-\sigma,\mathbf{a},i-j}\leq2e^{-K\sigma}
|R_{ij}|_{s,\mathbf{a},i-j}\ ;\end{equation}
by \eqref{17.9.13.6} and
\begin{eqnarray}\nonumber|(1-\Gamma_K)(\Delta\tilde{\Omega}_{ij}\cdot F_{ij})|_{s-\sigma,\mathbf{a},i-j}&\leq&e^{-K\sigma}(|i|+|j|)
|\Delta\Omega|_{-1,s,\mathbf{a},0}
|F_{ij}|_{s,\mathbf{a},i-j}\\
\nonumber&\lessdot&\frac{e^{-K\sigma}}{m}
|\Delta\Omega|_{-1,s,\mathbf{a},0}|R_{ij}|_{s,\mathbf{a},i-j},
\end{eqnarray}
\begin{equation*}\label{17.11.9.1}|(1-\Gamma_K)\Delta R_{ij}|
_{s-\sigma,\mathbf{a},i-j}\leq e^{-K\sigma}
|\Delta R_{ij}|_{s,\mathbf{a},i-j},\end{equation*}
we obtain
\begin{equation}\label{17.11.10.5}|\Delta \hat{R}_{ij}|_{s-\sigma,\mathbf{a},i-j}
\lessdot\frac{e^{-K\sigma}}{\sigma}(\frac{|\Delta\omega|+
|\Delta\Omega|_{-1,s,\mathbf{a},0}}{m}|R_{ij}|_{s,\mathbf{a},i-j}+
|\Delta R_{ij}|_{s,\mathbf{a},i-j}).\end{equation}

For $i=-j,|j|>\Pi$, assuming $0<(\mathbf{a}-\mathbf{a}')C_J<\sigma$, then
$$|\pi(k,-2j)|\geq|2j|-|\sum_{b=1}^nk_bj_b|\geq2\Pi-C_J|k|\geq2\Pi-
\frac{|k|\sigma}{\mathbf{a}-\mathbf{a}'},$$
and thus
\begin{eqnarray}\nonumber|\hat{R}_{(-j)j}|_{s-\sigma,\mathbf{a}',-2j}&=&
\sum_{k\in\mathbb{Z}^n}|\hat{R}_{(-j)jk}|e^{|k|(s-\sigma)}
e^{\mathbf{a}|\pi(k,-2j)|}e^{-(\mathbf{a}-\mathbf{a}')
|\pi(k,-2j)|}\\ \nonumber&\leq&\sum_{k\in\mathbb{Z}^n}|\hat{R}_{(-j)jk}|
e^{|k|(s-\sigma)}e^{\mathbf{a}|\pi(k,-2j)|}
e^{-(\mathbf{a}-\mathbf{a}')(2\Pi-\frac{|k|\sigma}{\mathbf{a}-\mathbf{a}'})}\\
\nonumber&\leq&e^{-2(\mathbf{a}-\mathbf{a}')\Pi}\sum_{k\in\mathbb{Z}^n}
|\hat{R}_{(-j)jk}|e^{|k|s}e^{\mathbf{a}|\pi(k,-2j)|}\\
\label{17.10.11.2}&=&e^{-2(\mathbf{a}-\mathbf{a}')\Pi}
|R_{(-j)j}|_{s,\mathbf{a},-2j}.\end{eqnarray}
Similarly, we have
\begin{equation}\label{17.11.9.2}|\Delta \hat{R}_{(-j)j}|_{s-\sigma,\mathbf{a}',-2j}
\leq e^{-2(\mathbf{a}-\mathbf{a}')\Pi}
|\Delta R_{(-j)j}|_{s,\mathbf{a},-2j}.\end{equation}

Using Lemma \ref{lem17.9.7.1} below, from \eqref{17.11.10.4} \eqref{17.10.11.2}, we get
\begin{eqnarray}\nonumber||X_{\langle \hat{R}^{z\bar{z}}z,\bar{z}\rangle}||_{s-2\sigma,r,p-1,\mathbf{a}'}
&\lessdot&\frac{\max\{e^{-K\sigma},
e^{-2(\mathbf{a}-\mathbf{a}')\Pi}\}}{\sigma}||X_{\langle R^{z\bar{z}}z,\bar{z}\rangle}||_{s,r,p-1,\mathbf{a}}\\
\label{17.10.9.1}&\leq&\frac{\max\{e^{-K\sigma},e^{-2(\mathbf{a}-\mathbf{a}')\Pi}
\}}{\sigma}
||X_R||_{s,r,p-1,\mathbf{a}},\end{eqnarray}
and from \eqref{17.11.10.5} \eqref{17.11.9.2}, we get
\begin{equation}\label{17.10.11.5}||X_{\langle\hat{R}^{z\bar{z}}z,
\bar{z}\rangle}||^{lip}_{s-2\sigma,r,p-1,\mathbf{a'};\mathcal{O}}
\lessdot
\frac{\max\{e^{-K\sigma},
e^{-2(\mathbf{a}-\mathbf{a}')\Pi}\}}{\sigma^{2}}
(\frac{M}{m}||X_{R}||_{s,r,p-1,\mathbf{a};\mathcal{O}}+||X_{R}||^{lip}_{
s,r,p-1,\mathbf{a};\mathcal{O}}).\end{equation}
Therefore, from \eqref{17.10.9.1} and \eqref{17.10.11.5}, we get
\begin{equation}\label{17.10.11.6}||X_{\langle\hat{R}^{z\bar{z}}z,\bar{z}
\rangle}||^{\lambda}_{s-2\sigma,r,p-1,\mathbf{a'};\mathcal{O}}\lessdot
\frac{\max\{e^{-K\sigma},e^{-2(\mathbf{a}-\mathbf{a}')\Pi}\}}{\sigma^{2}}
||X_{R}||_{s,r,p-1,\mathbf{a};
\mathcal{O}}^{\lambda}.\end{equation}

For the other terms of $\hat{R}$, i.e., $\langle\hat{R}^z,z\rangle, \langle\hat{R}^{\bar{z}},\bar{z}\rangle,\langle\hat{R}^{zz}z,z\rangle,
\langle\hat{R}^{\bar{z}\bar{z}}\bar{z},\bar{z}\rangle$, the same results- even better- than \eqref{17.10.11.6} can be obtained. Thus, we finally get the estimate for the error term $\hat{R}$:
\begin{equation}\label{17.10.11.7}||X_{\hat{R}}
||^{\lambda}_{s-2\sigma,r,p-1,\mathbf{a'};\mathcal{O}}
\lessdot\frac{\max\{e^{-K\sigma},
e^{-2(\mathbf{a}-\mathbf{a}')\Pi}\}}{\sigma^2}||X_{R}||_{s,r,p-1,\mathbf{a};
\mathcal{O}}^{\lambda}.\end{equation}

Now we consider the new perturbation \eqref{17.10.11.8}.
 By setting $R(t)=(1-t)(\hat{N}+\hat{R})+tR$, we have
\begin{equation}
X_{P_+}=X_{\hat{R}}+\int_0^1 X_{\{R(t),F\}\circ\Phi_F^t}dt
+X_{(P-R)\circ\Phi_F^1}.
\end{equation}
We assume that
\begin{equation}\label{17.10.11.11}||X_P||_{s,r,p-1,\mathbf{a};\mathcal{O}}
^{\lambda}\leq\frac{\alpha_2\eta^2}{B_{\sigma}}\cdot
\frac{1}{\max\{e^{8C_{0}\gamma_0Ks},
 e^{8\gamma_0\Pi s}\}}
\end{equation}
for $0\leq\lambda\leq\alpha_2/M$ with some $0<\eta<1$ and $0<s<1,\sigma=s/20$, where
$B_{\sigma}=c\sigma^{-(4n+4\tau+5)}$ with $c$ being a sufficiently large constant depending only on $n$, $\tau$ and $|\omega|_{\mathcal{O}}$. Since $R$ is $2$-order Taylor polynomial truncation in $y,z,\bar{z}$ of $P$, we can obtain
\begin{equation}\label{17.10.11.10}||X_R||^{\lambda}_{s,r,p-1,\mathbf{a};
\mathcal{O}}\leq||X_P||^{\lambda}_{s,r,p-1,\mathbf{a};\mathcal{O}}
,\end{equation}
\begin{equation}\label{17.10.12.3}||X_P-X_R||^{\lambda}_{s,\frac65\eta r,p-1,\mathbf{a};\mathcal{O}}\leq\frac65\eta||X_P||^{\lambda}
_{s,r,p-1,\mathbf{a};\mathcal{O}}.\end{equation}
From \eqref{17.10.11.9} \eqref{17.9.14.1} \eqref{17.9.14.4} and \eqref{17.10.11.7}, we get
\begin{equation}\label{17.12.23.3}||X_{F}||_{s-4\sigma,r,p,\mathbf{a};
\mathcal{O}}^{\lambda}\lessdot\frac{\max\{e^{8C_0\gamma_0Ks},
e^{8\gamma_0\Pi s}\}}
{\alpha_2\sigma^{4n+2\tau+2}}||X_P||_{s,r,p-1,\mathbf{a};
\mathcal{O}}^{\lambda},
\end{equation}
\begin{equation}\label{17.10.12.1}||X_{\hat{N}}||_{s-2\sigma,r,p-1,\mathbf{a};
\mathcal{O}}^{\lambda}\lessdot\frac{1}{\sigma^{2\tau+2}}
||X_P||_{s,r,p-1,\mathbf{a};\mathcal{O}}^{\lambda},\end{equation}
\begin{equation}\label{17.10.12.2}||X_{\hat{R}}
||^{\lambda}_{s-2\sigma,r,p-1,\mathbf{a'};\mathcal{O}}\lessdot
\frac{\max\{e^{-K\sigma},e^{-2(\mathbf{a}-\mathbf{a}')\Pi}\}}{\sigma^{2}}
||X_{P}||_{s,r,p-1,\mathbf{a};
\mathcal{O}}^{\lambda}.\end{equation}
Therefore, by \eqref{17.10.11.10} \eqref{17.10.12.1} \eqref{17.10.12.2}, we get
\begin{equation}\label{17.12.24.2}||X_{R(t)}||^{\lambda}_{s-2\sigma,r,p-1,\mathbf{a}';
\mathcal{O}}\lessdot\frac{1}{\sigma^{2\tau+2}}
||X_P||_{s,r,p-1,\mathbf{a};\mathcal{O}}^{\lambda}.\end{equation}
By \eqref{17.12.23.3} \eqref{17.12.24.2} and Lemma \ref{lem17.12.22.1}, we get
\begin{eqnarray}\nonumber||[X_{\{R(t),F\}}||^{\lambda}_{s-5\sigma,r/2,p-1,
\mathbf{a}';\mathcal{O}}&\lessdot&||X_{R(t)}||
^{\lambda}_{s-4\sigma,r,p-1,\mathbf{a}';\mathcal{O}}
||X_F||^{\lambda}_{s-4\sigma,r,p,\mathbf{a}';\mathcal{O}}\\
\label{17.12.23.4}&\lessdot&\frac{\max\{e^{8C_0\gamma_0Ks},
e^{8\gamma_0\Pi s}\}}
{\alpha_2\sigma^{4n+4\tau+4}}(||X_P||^{\lambda}_{s,r,p-1,\mathbf{a};
\mathcal{O}})^2.\end{eqnarray}
Moreover, together with the smallness assumption \eqref{17.10.11.11}, by properly choosing $c$, we get
\begin{equation}\label{17.12.23.5}||X_F||^{\lambda}
_{s-4\sigma,r,p,\mathbf{a};\mathcal{O}}
\leq\frac{\eta^2\sigma}{c_0}\end{equation}
with some suitably large constant $c_0\geq1$ and thus
\begin{equation}\label{17.12.23.6}2^{2n+5}e\max\{\frac{s-4\sigma}
{(s-4\sigma)-(s-5\sigma)},\frac{r}{r-r/2}\}
||X_F||^{\lambda}_{s-4\sigma,r,p,\mathbf{a}';\mathcal{O}}\leq
2^{2n+9}e\frac{\eta^2\sigma}{c_0}<\frac{1}{10}.\end{equation}
By \eqref{17.12.23.4} \eqref{17.12.23.6} and using Lemma \ref{lem17.12.23.1}, for $-1\leq t\leq1$, the time $t$-Hamiltonian flow $$\Phi_F^t:D(s-5\sigma,r/2)\rightarrow D(s-4\sigma,r)$$
and we get
\begin{eqnarray}\nonumber||X_{\{R(t),F\}\circ\Phi_F^t}
||^{\lambda}_{s-5\sigma,r/2,p-1,\mathbf{a}';\mathcal{O}}
&\leq&\frac{10}{9}||[X_{\{R(t),F\}}||^{\lambda}_{s-4\sigma,r,p-1,\mathbf{a}';
\mathcal{O}}\\
&\lessdot&\frac{\max\{e^{8C_0\gamma_0Ks},
e^{8\gamma_0\Pi s}\}}
{\alpha_2\sigma^{4n+4\tau+4}}(||X_P||^{\lambda}_{s,r,p-1,\mathbf{a};
\mathcal{O}})^2.
\end{eqnarray}
Hence also
\begin{equation}||X_{\{R(t),F\}\circ\Phi_F^t}
||^{\lambda}_{s-5\sigma,\eta r,p-1,\mathbf{a}';\mathcal{O}}
\lessdot\frac{\max\{e^{8C_0\gamma_0Ks},
e^{8\gamma_0\Pi s}\}}
{\alpha_2\eta^2\sigma^{4n+4\tau+4}}(||X_P||^{\lambda}_{s,r,p-1,\mathbf{a};
\mathcal{O}})^2.\end{equation}
From \eqref{17.12.23.5}, we have
\begin{equation}||X_F||^{\lambda}_{s-4\sigma,\frac65\eta r,p,\mathbf{a};\mathcal{O}}\leq\frac{25\sigma}{36c_0},\end{equation}
 and thus
\begin{equation}\label{17.12.24.1}2^{2n+5}e\max\{\frac{s-4\sigma}
{(s-4\sigma)-(s-5\sigma)},\frac{\frac65\eta r}{\frac65\eta r-\eta r}\}
||X_F||^{\lambda}_{s-4\sigma,\frac65\eta r,p,\mathbf{a};\mathcal{O}}\leq
2^{2n+9}e\frac{25\sigma}{36c_0}<\frac{1}{10}.\end{equation}
By \eqref{17.10.12.3} \eqref{17.12.24.1} and using Lemma \ref{lem17.12.23.1}, the time $1$-Hamiltonian flow $$\Phi_F^1:D(s-5\sigma,\eta r)\rightarrow D(s-4\sigma,\frac65\eta r)$$
and we get
\begin{eqnarray}\nonumber||X_{(P-R)\circ\Phi_F^1}
||^{\lambda}_{s-5\sigma,\eta r,p-1,\mathbf{a};\mathcal{O}}
&\leq&\frac{10}{9}||X_{P-R}||^{\lambda}_{s-4\sigma,\frac65\eta r,p-1,\mathbf{a};\mathcal{O}}\\
&\leq&\frac43\eta ||X_P||^{\lambda}_{s,r,p-1,\mathbf{a};\mathcal{O}}.
\end{eqnarray}
Together with the estimate of $\hat{R}$ in \eqref{17.10.12.2}, we finally arrive at the estimate
\begin{eqnarray}\label{17.10.16.13}||X_{P_+}||_{s-5\sigma,\eta r,p-1,\mathbf{a}';\mathcal{O}}^{\lambda}
&\leq&(\frac{B_{\sigma}\max\{e^{8C_0\gamma_0Ks},
e^{8\gamma_0\Pi s}\}}{\alpha_2\eta^2}||X_P||^{\lambda}_{s,r,p-1,\mathbf{a};
\mathcal{O}}\\ \nonumber&&+
\frac{B_{\sigma}\max\{e^{-K\sigma},
e^{-2(\mathbf{a}-\mathbf{a}')\Pi}\}
}{\alpha_2\eta^2}+\frac43\eta)||X_P||^{\lambda}_{s,r,p-1,\mathbf{a};
\mathcal{O}}.
\end{eqnarray}
This is the bound for the new perturbation.

\section{Iteration and Convergence}
Set $\beta'=\frac12\min\{\frac{\beta}{1+\beta},\frac14\}$ and $\kappa=\frac{4}{3}-\frac{\beta'}{3}$. Now we give the precise set-up of iteration parameters. Let $\nu\geq0$ be the $\nu$-th KAM step.
\begin{itemize}

\item[]
$m_{\nu}=\frac{m_0}{10}(9+2^{-\nu})$, which is used for describing the growth of external frequencies,
\item[]
$E_{\nu}=\frac{E_0}{9}(10-2^{-\nu})$, which is used to dominate the norm of internal frequencies,
\item[]
$M_{1,\nu}=\frac{M_{1,0}}{9}(10-2^{-\nu}),M_{2,\nu}=
\frac{M_{2,0}}{9}(10-2^{-\nu}),
M_{\nu}=M_{1,\nu}+M_{2,\nu}$, which are used to dominate the Lipschitz semi-norm of frequencies,
\item[]
$M_{3,\nu}=\frac{M_{3,0}}{10}(9+2^{-\nu})$, which describes the lower bound for the sup-norm or Lipschitz semi-norm of the small divisors,
\item[]
$J_0=0$, $J_{\nu}=\gamma_0^{-\frac{\kappa^{\nu-1}}{\tau+1}}, \nu\geq1$,
  which are used for the estimate of measure,
\item[]
$s_{\nu}=s_02^{-\nu}$, which dominates the width of the angle variable $x$,
\item[]
$\sigma_{\nu}=s_{\nu}/20$, which serves as a bridge from $s_{\nu}$ to $s_{\nu+1}$,
\item[]
$\mathbf{a}_{\nu}=\sigma_{\nu}/C_J$, which is used to control higher momentum term,
\item[]
$B_{\nu}=24B_{\sigma_{\nu}}=c\sigma_{\nu}^{-(4n+4\tau+5)}$, here $c$ is a large constant only depending on $n,\tau$ and $E_0$,
\item[]
$\epsilon_{\nu}=(\epsilon_0\Pi_{\mu=0}^{\nu-1}(\frac{2^{\mu}
B_{\mu}}{\alpha_{0}})
^{\frac{1}{3\kappa^{\mu+1}}})^{\kappa^{\nu}}$, which dominates the size of the perturbation $P_{\nu}$ in the $\nu$-th KAM iteration,
\item[]
$K_{\nu}=5|\ln\epsilon_{\nu}|/(4\sigma_{\nu})$, which is the length of the truncation of Fourier series,
\item[]
$\Pi_{\nu}=5|\ln\epsilon_{\nu}|/(2\mathbf{a}_{\nu})$, which controls the number of homological equations with double normal frequencies,
\item[]
$\alpha_{1,\nu}=\frac{\alpha_0}{10}(9+2^{-\nu})$ and $\alpha_{2,\nu}=\alpha_0{2^{-\nu}}\Pi_{\nu}^{-1}$, which are used to dominate the measure of removed parameters,
\item[]
$\lambda_0=\frac{\alpha_0}{M_0},\quad\lambda_{\nu}=
\frac{\alpha_{2,\nu}}{M_{\nu}},\quad \nu\geq1$,
\item[]
$(2\eta_{\nu})^3=
\epsilon_{\nu}^{1-\beta'}
\alpha_{2,\nu}^{-1}B_{\nu}, \quad r_{\nu+1}=\eta_{\nu}r_{\nu},\quad D_{\nu}=D(s_{\nu},r_{\nu}).$
\end{itemize}

\subsection{Iterative Lemma}

\begin{lem}
Suppose that
\begin{equation}\label{17.10.16.1}\epsilon_0\leq
(\frac{\alpha_0\gamma_0}{80})^{\frac{1}{1-2\beta'}}
\prod_{\mu=0}^{\infty}(2^{\mu}B_{\mu})^{-\frac{1}{3\kappa^{\mu+1}}},
\quad\alpha_0\leq\min\{\frac{m_0}{10},\frac{M_{3,0}}{2}\}.
\end{equation}
Suppose $H_{\nu}=N_{\nu}+P_{\nu}$ is regular on $D_{\nu}\times\mathcal{O}_{\nu}$, where $N_{\nu}$ is a generalized normal form with coefficients satisfying
\begin{equation}\label{17.10.16.8}|\omega_{\nu}|_{\mathcal{O}_{\nu}}\leq E_{\nu},\quad|\omega_{\nu}|^{lip}_{\mathcal{O}_{\nu}}\leq M_{1,\nu},\end{equation}
\begin{equation}
\label{17.10.16.9}|\Omega_{\nu}|_{-1,s_{\nu},\mathbf{a}_{\nu},0;
\mathcal{O}_{\nu}}^{lip}\leq M_{2,\nu},
\end{equation}
\begin{equation}\label{17.10.16.10}|\langle l,\bar{\Omega}_{\nu}(\xi)\rangle|\geq m_{\nu}|\sum_{j\in\mathbb{Z}_*}j^2l_j|,\quad |l|\leq2,\end{equation}
\begin{equation}\label{17.10.16.6}|\tilde{\Omega}_{\nu,j}(\xi)|_{s_{\nu},
\tau+1}
+|\tilde{\Omega}_{\nu,j}(\xi)|_{s_{\nu},\mathbf{a}_{\nu},0}
\leq(\alpha_0-\alpha_{1,\nu})\gamma_0|j|,\quad j\in\mathbb{Z}_*,\end{equation}
\begin{eqnarray}\nonumber&&\inf_{\xi\in\mathcal{O}_{\nu}}|\langle k,\omega_{\nu}(\xi)\rangle+\langle l,\bar{\Omega}_{\nu}(\xi)\rangle|
+\inf_{\xi-\zeta// v_{kl}}\frac{|\Delta_{\xi\zeta}
    (\langle k,\omega_{\nu}\rangle+\langle l,\bar{\Omega}_{\nu}\rangle)|}{|\xi-\zeta|}\\ \label{17.11.14.1} &&\geq M_{3,\nu}\max\{|k|,\sum_{j\in\mathbb{Z}_*}|jl_j|\},\quad
    k\in\mathbb{Z}^n,
|l|\leq2\end{eqnarray}
on $\mathcal{O}_{\nu}$ and $P_{\nu}$ satisfies
\begin{equation}\label{17.12.24.5}||X_{P_{\nu}}||^{\lambda_{\nu}}_{s_{\nu},
r_{\nu},p-1,\mathbf{a}_{\nu};\mathcal{O}_{\nu}}
\leq\epsilon_{\nu}.\end{equation}
Let
\begin{equation}\label{17.10.16.11}\mathcal{O}_{\nu+1}=\mathcal{O}_{\nu}
\setminus\big(\bigcup_{k\in\mathbb{Z}^n\setminus\{0\},|l|\leq2\atop{l\neq e_{-j}-e_j}}
\mathcal{R}_{kl}^{\nu}(\alpha_{1,\nu})\cup\bigcup_{k\in\mathbb{Z}^n,\pm j\in\mathbb{Z}_*\atop{|j|\leq \Pi_{\nu}}}
\mathcal{R}_{k(-j)j}^{\nu}
(\alpha_{2,\nu})\big),\end{equation}
where
\begin{equation}\mathcal{R}_{kl}^{\nu}(\alpha_{1,\nu})=
\{\xi\in\mathcal{O}_{\nu}:|\langle k,\omega_{\nu}(\xi)\rangle+\langle l,\bar{\Omega}_{\nu}(\xi)\rangle|<\alpha_{1,\nu}\frac{\langle l\rangle_{\infty}}{\langle k\rangle^{\tau}}\},\end{equation}
\begin{equation}\mathcal{R}_{k(-j)j}^{\nu}(\alpha_{2,\nu})=
\{\xi\in\mathcal{O}_{\nu}:|\langle k,\omega_{\nu}(\xi)\rangle+ \bar{\Omega}_{\nu,(-j)}(\xi)-\bar{\Omega}_{\nu,j}(\xi)|
<\alpha_{2,\nu}\frac{|j|}{\langle k\rangle^{\tau}}\}.\end{equation}
Then there exists a Lipschitz family of real close-to-the-identity analytic symplectic coordinate transformation
$\Phi_{\nu+1}:D_{\nu+1}\times\mathcal{O}_{\nu+1}\rightarrow D_{\nu}$ satisfying
\begin{eqnarray}\nonumber&&||\Phi_{\nu+1}-\mbox{id}||^{\lambda_{\nu}}
_{s_{\nu},r_{\nu},p;D_{\nu+1}\times\mathcal{O}_{\nu+1}},
||D\Phi_{\nu+1}-I||^{\lambda_{\nu}}_{s_{\nu},r_{\nu},p,p;D_{\nu+1}
\times\mathcal{O}_{\nu+1}},\\
\label{17.12.24.3}&&||D\Phi_{\nu+1}-I||^{\lambda_{\nu}}
_{s_{\nu},r_{\nu},q,q;D_{\nu+1}
\times\mathcal{O}_{\nu+1}}\leq\frac{B_{\nu}}{\alpha_{2,\nu}}
\epsilon_{\nu}^{1-\beta'},\end{eqnarray}
where $||\cdot||_{s,r,p,p}$ denotes the operator norm induced by $||\cdot||_{s,r,p}$ and $||\cdot||_{s,r,p}$ in the source and target spaces, respectively,
such that for $H_{\nu+1}=H_{\nu}\circ\Phi_{\nu+1}=N_{\nu+1}+P_{\nu+1}$, the estimate
\begin{equation}\label{17.10.16.2}|\omega_{\nu+1}-\omega_{\nu}|
_{\mathcal{O}_{\nu+1}}^{\lambda_{\nu}},
\quad|\Omega_{\nu+1}-\Omega_{\nu}|^{\lambda_{\nu}}
_{-1,s_{\nu+1},\mathbf{a}_{\nu},0;
\mathcal{O}_{\nu+1}}\leq B_{\nu}\epsilon_{\nu}\end{equation}
holds and the same assumptions as above are satisfied with `$\nu+1$' in place of `$\nu$'.
\end{lem}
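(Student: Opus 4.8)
The plan is to build $\Phi_{\nu+1}$ as the time-$1$ map $X_F^1$ of the Hamiltonian flow of the generating function $F$ of the form \eqref{7.12.5} with $[[F]]=0$, solving the modified homological equations \eqref{14.2.4.10}--\eqref{14.2.4.16}, i.e.\ with the Fourier cutoff $\Gamma_{K_\nu}$ and the momentum cutoff $\Pi_\nu$ as in \eqref{17.7.26.3}--\eqref{17.9.26.4}. The first step is to check that on $\mathcal{O}_{\nu+1}$ the hypotheses required to apply Lemma~\ref{lem17.7.31.1} and Lemma~\ref{lem17.9.22.1} hold with the iteration parameters $\alpha_{1,\nu},\alpha_{2,\nu},m_\nu,\mathbf{a}_\nu,s_\nu,\Pi_\nu$: the size conditions \eqref{17.7.20.22}--\eqref{17.7.20.23} are precisely the inductive hypotheses \eqref{17.10.16.10} and \eqref{17.10.16.6}, the nonresonance conditions \eqref{17.9.21.1} and \eqref{17.7.20.21} hold by the very definition \eqref{17.10.16.11} of $\mathcal{O}_{\nu+1}$ (which removes $\mathcal{R}^\nu_{kl}(\alpha_{1,\nu})$ for $l\neq e_{-j}-e_j$ and $\mathcal{R}^\nu_{k(-j)j}(\alpha_{2,\nu})$ for $|j|\le\Pi_\nu$), and the Diophantine bound on $\langle k,\omega_\nu\rangle$ is the $l=0$ case of \eqref{17.11.14.1}. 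One also records the compatibility of $\mathbf{a}_\nu$ with $\sigma_\nu$ needed by those lemmas and by Section~5 (i.e.\ $\mathbf{a}_\nu=\sigma_\nu/C_J$ small enough relative to $\sigma_\nu$, and $0<(\mathbf{a}_\nu-\mathbf{a}_{\nu+1})C_J<\sigma_\nu$). Granting this, the computations of Sections~4--5 apply verbatim and yield \eqref{17.12.23.3} for $X_F$, \eqref{17.10.12.1} for the normal-form correction $X_{\hat N}$, and \eqref{17.10.12.2} for the error term $X_{\hat R}$, with $\sigma=\sigma_\nu$, $\mathbf{a}=\mathbf{a}_\nu$, $\mathbf{a}'=\mathbf{a}_{\nu+1}$.

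Next I would verify the smallness hypothesis \eqref{17.10.11.11} at step $\nu$: inserting $\|X_{P_\nu}\|\le\epsilon_\nu$ from \eqref{17.12.24.5}, $\eta=\eta_\nu$ with $(2\eta_\nu)^3=\epsilon_\nu^{1-\beta'}\alpha_{2,\nu}^{-1}B_\nu$, and $\alpha_2=\alpha_{2,\nu}$, and using that for the chosen parameters $K_\nu\sigma_\nu\sim|\ln\epsilon_\nu|$, $(\mathbf{a}_\nu-\mathbf{a}_{\nu+1})\Pi_\nu\sim|\ln\epsilon_\nu|$ and $K_\nu s_\nu,\Pi_\nu s_\nu=O(|\ln\epsilon_\nu|)$ — so the blow-up factor $\max\{e^{8C_0\gamma_0K_\nu s_\nu},e^{8\gamma_0\Pi_\nu s_\nu}\}$ is $\epsilon_\nu^{-O(\gamma_0)}$ and the gains $e^{-K_\nu\sigma_\nu},e^{-2(\mathbf{a}_\nu-\mathbf{a}_{\nu+1})\Pi_\nu}$ are powers $\epsilon_\nu^{c}$ with $c>1$ — the inequality \eqref{17.10.11.11} reduces to one of the form $\epsilon_\nu^{\,1/3+O(\beta')-O(\gamma_0)}\le(\alpha_{2,\nu}/B_\nu)^{1/3}$, which holds for every $\nu$ once $\gamma_0$ is small relative to $C_0=2|\omega|_{\mathcal{O}}/m$ and $C_J$ and $\epsilon_0$ obeys \eqref{17.10.16.1}, because $\epsilon_\nu$ decays like $\epsilon_0^{\kappa^\nu}$ while $(\alpha_{2,\nu}/B_\nu)^{-1}$ is only polynomial in $2^\nu$ and $|\ln\epsilon_\nu|$. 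With \eqref{17.10.11.11} in force, \eqref{17.12.23.5}--\eqref{17.12.23.6} keep $X_F^t$ inside the domain, so Lemma~\ref{lem17.12.23.1} applies and, together with \eqref{17.12.23.3}, gives that $\Phi_{\nu+1}=X_F^1$ maps $D_{\nu+1}$ into $D_\nu$ and satisfies \eqref{17.12.24.3} (the $p,p$- and $q,q$-operator-norm bounds on $D\Phi_{\nu+1}-I$ follow from $p-q=1$ and the smallness of $\|X_F\|$). Finally, formula \eqref{17.10.11.8} together with the new-perturbation estimate \eqref{17.10.16.13} gives $\|X_{P_{\nu+1}}\|^{\lambda_{\nu}}_{s_{\nu+1},r_{\nu+1},p-1,\mathbf{a}_{\nu+1};\mathcal{O}_{\nu+1}}\lessdot(B_\nu/\alpha_{2,\nu})^{1/3}\epsilon_\nu^{\kappa}$ with $\kappa=\tfrac43-\tfrac{\beta'}{3}$ (the $\tfrac43\eta_\nu$-term of \eqref{17.10.16.13} being dominant); absorbing the polynomial factor into the definition $\epsilon_{\nu+1}=(\epsilon_0\prod_{\mu<\nu+1}(2^\mu B_\mu/\alpha_0)^{1/(3\kappa^{\mu+1})})^{\kappa^{\nu+1}}$ and passing to the smaller weight $\lambda_{\nu+1}\le\lambda_\nu$, this is exactly \eqref{17.12.24.5} with $\nu+1$ in place of $\nu$.

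For the new normal form $N_{\nu+1}=N_\nu+\hat N$ one uses \eqref{17.9.14.1} and \eqref{17.9.14.4} to get $|\omega_{\nu+1}-\omega_\nu|^{\lambda_\nu}_{\mathcal{O}_{\nu+1}}$, $|\Omega_{\nu+1}-\Omega_\nu|^{\lambda_\nu}_{-1,s_{\nu+1},\mathbf{a}_\nu,0;\mathcal{O}_{\nu+1}}\le B_\nu\epsilon_\nu$, which is \eqref{17.10.16.2}. The bounds \eqref{17.10.16.8} and \eqref{17.10.16.9} then propagate by telescoping, since $\sum_\mu B_\mu\epsilon_\mu$ is summable and, by \eqref{17.10.16.1}, so small that $\sum_{\mu\le\nu}B_\mu\epsilon_\mu$ stays below the gaps $E_{\nu+1}-E_\nu$, $M_{1,\nu+1}-M_{1,\nu}$, $M_{2,\nu+1}-M_{2,\nu}\sim 2^{-\nu}$; \eqref{17.10.16.6} propagates the same way, using the stronger $\mathbf{a}_\nu$-weighted bound \eqref{17.9.14.4} on $\hat\Omega$ and $\alpha_{1,\nu}-\alpha_{1,\nu+1}=\tfrac{\alpha_0}{10}2^{-\nu-1}$; and \eqref{17.10.16.10} propagates since $\langle l,\bar\Omega_\nu\rangle$ changes by at most $B_\nu\epsilon_\nu\sum_j|jl_j|\le B_\nu\epsilon_\nu|\sum_j j^2l_j|$ (valid for $|l|\le2$) while $m_\nu-m_{\nu+1}=\tfrac{m_0}{10}2^{-\nu-1}$ absorbs it.

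Finally, assumption (C), i.e.\ \eqref{17.11.14.1} with $\nu+1$: passing from $\mathcal{O}_\nu$ to the subset $\mathcal{O}_{\nu+1}$ can only increase each of the two infima in \eqref{17.11.14.1} (fewer points $\xi$, fewer admissible pairs $\xi-\zeta// v_{kl}$), so the bound holds on $\mathcal{O}_{\nu+1}$ with $\omega_\nu,\bar\Omega_\nu$; replacing them by $\omega_{\nu+1},\bar\Omega_{\nu+1}$ changes $D_{kl}$ by a function of sup-norm $\le 2\max\{|k|,\sum_j|jl_j|\}B_\nu\epsilon_\nu$ and of Lipschitz semi-norm on $\mathcal{O}_{\nu+1}$ $\le 2\max\{|k|,\sum_j|jl_j|\}B_\nu\epsilon_\nu/\lambda_\nu$ by \eqref{17.10.16.2}, so the sum of the two infima decreases by at most $\lessdot\max\{|k|,\sum_j|jl_j|\}B_\nu\epsilon_\nu/\lambda_\nu$; since $M_{3,\nu}-M_{3,\nu+1}=\tfrac{M_{3,0}}{10}2^{-\nu-1}$ and $B_\nu\epsilon_\nu/\lambda_\nu=B_\nu\epsilon_\nu M_\nu/\alpha_{2,\nu}\to0$ super-exponentially, the loss is absorbed and \eqref{17.11.14.1} holds with $M_{3,\nu+1}$. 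This last step is routine (as noted in the Introduction, preservation of (C) is ``relatively trivial''); the genuine care in the lemma lies rather in the second paragraph — simultaneously choosing $\gamma_0$, $K_\nu$, $\Pi_\nu$, $\mathbf{a}_\nu$, $\eta_\nu$, $\epsilon_\nu$ so that the exponential blow-up $\epsilon_\nu^{-O(\gamma_0)}$ produced by the large-variable-coefficient estimates of Lemma~\ref{lem17.7.31.1} remains compatible with the super-exponential decay $\epsilon_\nu\sim\epsilon_0^{\kappa^\nu}$, which is exactly why the constant $\gamma$ in Theorem~\ref{thm7.12.1} must be taken small depending on $m$ and the frequencies.
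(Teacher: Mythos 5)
Your proposal is correct and follows essentially the same route as the paper's proof: build $\Phi_{\nu+1}$ as the time-$1$ flow of the generating function $F$ solving the truncated homological equations, verify the smallness condition \eqref{17.10.11.11} with the designated iteration parameters (noting that the blow-up factors $e^{8C_{0}\gamma_0 K_\nu s_\nu},\,e^{8\gamma_0\Pi_\nu s_\nu}$ are $\le\epsilon_\nu^{-\beta'}$ while $e^{-K_\nu\sigma_\nu},\,e^{-2(\mathbf a_\nu-\mathbf a_{\nu+1})\Pi_\nu}$ are powers $>1$ of $\epsilon_\nu$), then feed the estimates \eqref{17.12.23.3}, \eqref{17.10.12.1}, \eqref{17.10.12.2}, \eqref{17.10.16.13} into the choice of $\eta_\nu$, $\epsilon_{\nu+1}$, and finally propagate the normal-form hypotheses and assumption (C) by absorbing the $O(B_\nu\epsilon_\nu/\lambda_\nu)$ drift into the gaps $M_{3,\nu}-M_{3,\nu+1}$, $m_\nu-m_{\nu+1}$, etc. A couple of details you leave implicit but which the paper spells out: the passage from \eqref{17.10.16.2}, which is an $|\cdot|_{s,\mathbf a,0}$ bound, to the $|\cdot|_{s,\tau+1}$ bound needed in \eqref{17.10.16.6} is done via Lemma \ref{lem18.3.19.1} (a Cauchy-type estimate trading analyticity width for polynomial weight), and the verification of the smallness $\epsilon_\nu^{1-\beta'}\le\alpha_{2,\nu}/(64B_\nu)$ requires the explicit chain \eqref{17.12.28.1}--\eqref{18.3.22.4} rather than just the heuristic ``super-exponential beats polynomial''; in particular the extra factor $\Pi_\nu^{-1}$ hidden in $\alpha_{2,\nu}$ is itself exponential in $\nu$, so the claim that $\alpha_{2,\nu}^{-1}$ is ``only polynomial in $2^\nu$ and $|\ln\epsilon_\nu|$'' needs the observation that $|\ln\epsilon_\nu|\sim\kappa^\nu$ still loses against $\epsilon_\nu\sim\epsilon_0^{\kappa^\nu}$ — your conclusion is right, but the wording slightly underplays the growth. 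Also, for the preservation of \eqref{17.10.16.10} your inequality $\sum_j|jl_j|\le|\sum_j j^2l_j|$ is the key elementary fact for $|l|\le2$, $l\neq e_{-j}-e_j$ (and the excluded case is vacuous since both sides vanish); the paper handles this the same way. Finally, your remark that the $\tfrac43\eta_\nu$-term of \eqref{17.10.16.13} is ``dominant'' is inessential and slightly off — with the paper's choice $(2\eta_\nu)^3=\epsilon_\nu^{1-\beta'}B_\nu/\alpha_{2,\nu}$ all three contributions are of the same order $\eta_\nu$ and sum to exactly $2\eta_\nu$ — but this does not affect the argument.
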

\begin{proof}
Setting $C_{0,\nu}=2E_{\nu}/m_{\nu}$, then it is obvious $C_{0,\nu}\leq4C_{0,0}$.
 %
Thus we have
\begin{equation}\label{17.12.24.4}e^{8C_{0,\nu}\gamma_0K_{\nu}s_{\nu}},
 e^{8\gamma_0\Pi_{\nu}s_{\nu}}\leq \epsilon_{\nu}^{-\beta'}\end{equation}
by $$
K_{\nu}s_{\nu}=20K_{\nu}\sigma_{\nu}=25|\ln\epsilon_{\nu}|,\quad
\quad\Pi_{\nu}s_{\nu}=50C_J|\ln\epsilon_{\nu}|$$
and choosing $\gamma_0$ small enough such that $800\gamma_0C_{0,0},400C_J\gamma_0\leq\beta'$. In view of the definition of $\eta_{\nu}$, namely, $(2\eta_{\nu})^3=\epsilon_{\nu}^{1-\beta'}\alpha_{2,\nu}^{-1}B_{\nu}$, the smallness condition \eqref{17.10.11.11}, namely,
 $$\epsilon_{\nu}\leq\frac{\alpha_{2,\nu}\eta^2_{\nu}}
 {B_{\nu}}\cdot
 \frac{1}{\max\{e^{8C_{0,\nu}\gamma_0K_{\nu}s_{\nu}},
 e^{8\gamma_0\Pi_{\nu}s_{\nu}}\}},$$
 is satisfied if
\begin{equation}\label{18.3.22.3}\epsilon_{\nu}^{1-\beta'}\leq
\frac{\alpha_{2,\nu}}{64B_{\nu}}.\end{equation}
  To verify the last inequality we argue as follows. As $\frac{2^{\nu}}{\alpha_{0}}$ and $B_{\nu}$ are increasing with $\nu$,
  \begin{equation}(\frac{2^{\nu}B_{\nu}}{\alpha_{0}})^{\frac{1}{1-2\beta'}}
  =(\frac{2^{\nu}B_{\nu}}{\alpha_{0}})^{\frac{1}{3(\kappa-1)}}=
 (\prod_{\mu=\nu}^{\infty}(\frac{2^{\nu}B_{\nu}}{\alpha_{0}})^{\frac{1}
 {3\kappa^{\mu+1}}
  })^{\kappa^{\nu}}\leq(\prod_{\mu=\nu}^{\infty}(\frac{2^{\mu}B_{\mu}}
  {\alpha_{0}})^
  {\frac{1}{3\kappa^{\mu+1}}})^{\kappa^{\nu}}.\end{equation}
  By the definition of $\epsilon_{\nu}$ above and the smallness condition on $\epsilon_0$ in \eqref{17.10.16.1},
  \begin{equation}\label{17.12.28.1}\epsilon_{\nu}^{1-2\beta'}
  \frac{2^{\nu}B_{\nu}}{\alpha_{0}}
 \leq(\epsilon_{0}\prod_{\mu=0}^{\infty}(\frac{2^{\mu}B_{\mu}}{\alpha_{0}})
 ^{\frac{1}{3\kappa^{\mu+1}}})^{\kappa^{\nu}(1-2\beta')}
 \leq(\frac{\gamma_0}{80})^{\kappa^{\nu}},\end{equation}
 and thus we can choose $\gamma_0$ small enough such that \begin{equation}\label{18.3.22.2}\epsilon_{\nu}^{\beta'}
 \leq2^{-2\nu}\Pi_{\nu}^{-2}.\end{equation}
  In view of \eqref{17.12.28.1} and \eqref{18.3.22.2}, we get
  \begin{equation}\label{18.3.22.4}\epsilon_{\nu}^{1-\beta'}
  \frac{2^{\nu}B_{\nu}}{\alpha_{0}}\leq
  (\frac{\gamma_0}{80})^{\kappa^{\nu}}2^{-2\nu}\Pi_{\nu}^{-2},\end{equation}
   which implies \eqref{18.3.22.3} since $(\frac{\gamma_0}{80})^{\kappa^{\nu}}2^{-2\nu}\Pi_{\nu}^{-1}
   \leq\frac{1}{64}$, and thus the smallness condition \eqref{17.10.11.11} is satisfied for each $\nu\geq0$. In particular, noticing $\kappa\geq\frac54,$ we have
 \begin{equation}\label{17.10.16.5}\epsilon_{\nu}^{1-\beta'}
 \frac{B_{\nu}}{\alpha_{2,\nu}}
 \leq\frac{\gamma_0}{2^{\nu+6}}.\end{equation}
  By \eqref{17.11.8.2} \eqref{17.12.23.3} \eqref{17.12.24.5} and \eqref{17.12.24.4}, we have
  \begin{eqnarray}\nonumber||X_{F_{\nu}}||^{\lambda_{\nu}}
  _{s_{\nu}-4\sigma_{\nu},r_{\nu},p;D(s_{\nu}-4\sigma_{\nu},
  r_{\nu})\times\mathcal{O}_{\nu+1}}&\leq&
||X_{F_{\nu}}||_{s_{\nu}-4\sigma_{\nu}, r_{\nu},p,\mathbf{a}_{\nu};
\mathcal{O}_{\nu+1}}^{\lambda_{\nu}}\\
\nonumber &\lessdot&\frac{\max\{e^{8C_{0,\nu}\gamma_0K_{\nu}s_{\nu}},
e^{8\gamma_0\Pi_{\nu}s_{\nu}}\}}{\alpha_{2,\nu}\sigma_{\nu}
^{4n+2\tau+2}}||X_{P_{\nu}}||_{s_{\nu},
r_{\nu},p-1,\mathbf{a}_{\nu};\mathcal{O}_{\nu+1}}^{\lambda_{\nu}}\\
\label{17.12.25.1}&\leq&\frac{1}{\alpha_{2,\nu}\sigma_{\nu}
^{4n+2\tau+2}}
\epsilon_{\nu}^{1-\beta'}.
\end{eqnarray}
In view of \eqref{17.10.16.5} and \eqref{17.12.25.1}, for suitably small $\gamma_0$, we have
 $$||X_{F_{\nu}}||^{\lambda_{\nu}}
  _{s_{\nu}-4\sigma_{\nu},r_{\nu},p;D(s_{\nu}-4\sigma_{\nu},
  r_{\nu})\times\mathcal{O}_{\nu+1}}\leq\frac{1}{15}.$$
Then the flow $X_{F_{\nu}}^t$ of the vector field $X_{F_{\nu}}$ exists on
$D(s_{\nu}-6\sigma_{\nu},r_{\nu}/2)$ for $-1\leq t\leq1$ and takes this domain into $D(s_{\nu}-5\sigma_{\nu},r_{\nu})$. Similarly, it takes $D(s_{\nu}-7\sigma_{\nu},r_{\nu}/4)$ into $D(s_{\nu}-6\sigma_{\nu},r_{\nu}/2)$.
 Moreover, in the same way as Lemma 19.3 and (20.6) of \cite{KP}, we get
 \begin{eqnarray}\nonumber&&||DX_{F_{\nu}}||
 ^{\lambda_{\nu}}_{s_{\nu},r_{\nu},p,p;
D(s_{\nu}-5\sigma_{\nu},r_{\nu})\times
\mathcal{O}_{\nu+1}},||DX_{F_{\nu}}||^{\lambda_{\nu}}
_{s_{\nu},r_{\nu},q,q;
D(s_{\nu}-5\sigma_{\nu},r_{\nu})\times\mathcal{O}_{\nu+1}}\\
\label{17.12.25.2}&&\lessdot
\frac{1}{\sigma_{\nu}}||X_{F_{\nu}}||^{\lambda_{\nu}}_{s_{\nu},
r_{\nu},p;D(s_{\nu}-4\sigma_{\nu},r_{\nu})\times\mathcal{O}_{\nu+1}},
\end{eqnarray}
 \begin{equation}
||X_{F_{\nu}}^t-\mbox{id}||^{\lambda_{\nu}}_{s_{\nu},r_{\nu},p;
D(s_{\nu}-6\sigma_{\nu},r_{\nu}/2)
\times\mathcal{O}_{\nu+1}}
\lessdot||X_{F_{\nu}}||^{\lambda_{\nu}}_{s_{\nu},r_{\nu},p;
D(s_{\nu}-5\sigma_{\nu},r_{\nu})
\times\mathcal{O}_{\nu+1}}
,\end{equation}
\begin{equation}||DX^t_{F_{\nu}}-I||^{\lambda_{\nu}}_{s_{\nu},r_{\nu},p,p;
D(s_{\nu}-7\sigma_{\nu},r_{\nu}/4)
\times\mathcal{O}_{\nu+1}}\lessdot||DX_{F_{\nu}}||^{\lambda_{\nu}}
_{s_{\nu},r_{\nu},p,p;
D(s_{\nu}-5\sigma_{\nu},r_{\nu})\times\mathcal{O}_{\nu+1}},\end{equation}
\begin{equation}\label{17.12.25.3}||DX^t_{F_{\nu}}-I||^{\lambda_{\nu}}_{s_{\nu},r_{\nu},
q,q;D(s_{\nu}-7\sigma_{\nu},r_{\nu}/4)
\times\mathcal{O}_{\nu+1}}\lessdot||DX_{F_{\nu}}||^{\lambda_{\nu}}_{s_{\nu},
r_{\nu},q,q;D(s_{\nu}-5\sigma_{\nu},r_{\nu})\times\mathcal{O}_{\nu+1}}.
\end{equation}
 Now there exists a coordinate transformation $$\Phi_{\nu+1}:=X_{F_{\nu}}^1:D_{\nu+1}
\times\mathcal{O}_{\nu+1}\rightarrow D_{\nu}$$
 taking $H_{\nu}$ into $H_{\nu+1}$. Moreover, \eqref{17.12.24.3} is obtained by \eqref{17.12.25.1}-\eqref{17.12.25.3} and
 \eqref{17.10.16.2} is obtained by \eqref{17.10.12.1}. More explicitly, \eqref{17.10.16.2} is written as
 \begin{equation}\label{17.10.16.3}|\omega_{\nu+1}-\omega_{\nu}|_
 {\mathcal{O}_{\nu+1}},\quad |\Omega_{\nu+1}-\Omega_{\nu}|_{-1,s_{\nu+1},\mathbf{a}_{\nu},0;
 \mathcal{O}_{\nu+1}}\leq B_{\nu}\epsilon_{\nu},\end{equation}
 \begin{equation}\label{17.10.16.4}|\omega_{\nu+1}-\omega_{\nu}|^{lip}
 _{\mathcal{O}_{\nu+1}},\quad |\Omega_{\nu+1}-\Omega_{\nu}|^{lip}_{-1,s_{\nu+1},\mathbf{a}_{\nu},0;
 \mathcal{O}_{\nu+1}}\leq\frac{M_{\nu}}{\alpha_{2,\nu}} B_{\nu}\epsilon_{\nu}.\end{equation}
 Actually, from \eqref{17.10.12.1}, we have
 \begin{equation}\label{18.3.19.1}|\Omega_{\nu+1}-\Omega_{\nu}|_{-1,
 s_{\nu-2\sigma_{\nu}},\mathbf{a}_{\nu},0;\mathcal{O}_{\nu+1}}\leq \sigma_{\nu}^{4n+2\tau+3}B_{\nu}\epsilon_{\nu},\end{equation}
and thus by Lemma \ref{lem18.3.19.1}, we have
 \begin{eqnarray}\nonumber|\Omega_{\nu+1,j}-\Omega_{\nu,j}|_{s_{\nu+1}
 ,\tau+1}
 &\leq&(\frac{\tau+1}{\tau})^{\tau+1}\frac{1}{(18\sigma_{\nu}
 )^{\tau+1}}|\Omega_{\nu+1,j}-\Omega_{\nu,j}|_{s_{\nu-2\sigma_{\nu}}
 ,\mathbf{a}_{\nu},0}\\
 \label{18.3.19.3}&\leq&(\frac{\tau+1}{18\tau})^{\tau+1}
 \sigma_{\nu}^{4n+\tau+2}B_{\nu}\epsilon_{\nu}|j|.
 \end{eqnarray}
 By \eqref{18.3.22.4}, we have
  \begin{equation}\label{17.12.28.2}B_{\nu}\epsilon_{\nu}\leq
  (\frac{\gamma_0}{80})^{\kappa^{\nu}}\alpha_{2,\nu}
  \epsilon_{\nu}^{\beta'}\leq\frac{\alpha_{2,\nu}
  \gamma_0^{\kappa^{\nu}}}{2^{\nu+6}}.\end{equation}
 In view of \eqref{17.10.16.3} \eqref{17.10.16.4} \eqref{18.3.19.3} \eqref{17.12.28.2}, by choosing $\gamma_0$ properly small,
  \eqref{17.10.16.8}-\eqref{17.10.16.6} are satisfied with `$\nu+1$' instead of `$\nu$'.

  In the following we only need to check \eqref{17.11.14.1} \eqref{17.12.24.5} with `$\nu+1$'.
 By \eqref{17.12.28.2} and choosing $\gamma_0$ properly small, we have
  \begin{equation}\label{17.12.28.3}B_{\nu}\epsilon_{\nu}
  \leq\frac{\alpha_{2,\nu}\gamma_0}{2^{\nu+6}}\leq \frac12\min\{\frac{M_{\nu}}{\alpha_{2,\nu}},1\}(M_{3,\nu}-M_{3,\nu+1}).
  \end{equation}
In view of \eqref{17.10.16.3} \eqref{17.10.16.4} \eqref{17.12.28.3}, for $k\in\mathbb{Z}^n$ and $|l|\leq2$, we have
\begin{eqnarray}\nonumber|\langle k,\omega_{\nu+1}-\omega_{\nu}\rangle+
  \langle l,\bar{\Omega}_{\nu+1}-\bar{\Omega}_{\nu}\rangle|
  &\leq&|k||\omega_{\nu+1}-\omega_{\nu}|+(\sum_{j\in\mathbb{Z}_*}|jl_j|)
  |\bar{\Omega}_{\nu+1}-\bar{\Omega}_{\nu}|_{-1}\\
   \nonumber &\leq&\max\{|k|,\sum_{j\in\mathbb{Z}^*}|jl_j|\}B_{\nu}\epsilon_{\nu}\\
\label{17.12.28.5}&\leq&\frac12(M_{3,\nu}-M_{3,\nu+1})\max\{|k|,
\sum_{j\in\mathbb{Z}^*}|jl_j|\}
  \end{eqnarray}
on $\mathcal{O}_{\nu+1}$, and
\begin{eqnarray}\nonumber|\langle k,\omega_{\nu+1}-\omega_{\nu}\rangle+
  \langle l,\bar{\Omega}_{\nu+1}-\bar{\Omega}_{\nu}\rangle|
  ^{lip}_{\mathcal{O}_{\nu+1}}
  &\leq&|k||\omega_{\nu+1}-\omega_{\nu}|^{lip}_{\mathcal{O}_{\nu+1}}
  +(\sum_{j\in\mathbb{Z}_*}|jl_j|)
  |\bar{\Omega}_{\nu+1}-\bar{\Omega}_{\nu}|^{lip}_{-1,\mathcal{O}_{\nu+1}}\\
   \nonumber &\leq&\max\{|k|,\sum_{j\in\mathbb{Z}^*}|jl_j|\}\frac{M_{\nu}}
   {\alpha_{2,\nu}}B_{\nu}\epsilon_{\nu}\\
\label{17.12.28.4}&\leq&\frac12(M_{3,\nu}-M_{3,\nu+1})\max\{|k|,
\sum_{j\in\mathbb{Z}^*}|jl_j|\}.
  \end{eqnarray}
Therefore, \eqref{17.11.14.1} is obtained by \eqref{17.12.28.5} \eqref{17.12.28.4} with `$\nu+1$' in place of `$\nu$'.
   Finally, from \eqref{17.10.16.13} we get
  \begin{eqnarray}\nonumber
||X_{P_+}||_{s_{\nu+1},r_{\nu+1},p-1,\mathbf{a}_{\nu+1};
\mathcal{O}_{\nu+1}}^{\lambda_{\nu+1}}
&\leq&(\frac{B_{\sigma_{\nu}}\max\{e^{8C_0\gamma_0Ks},
e^{4\gamma_0(\Pi+C_JK)s}\}}{\alpha_{2,\nu}\eta_{\nu}^2}\epsilon_{\nu}\\
\nonumber&&+
\frac{B_{\sigma_{\nu}}(e^{-9K_{\nu}\sigma_{\nu}/10}+e^{-(\mathbf{a}_{\nu}-
\mathbf{a}_{\nu+1})\Pi_{\nu}})
}{\alpha_{2,\nu}\eta_{\nu}^2}+\frac43\eta_{\nu})\epsilon_{\nu}\\
\nonumber&\leq&(\frac{B_{\nu}}{24\alpha_{2,\nu}\eta_{\nu}^2}
\epsilon_{\nu}^{1-\beta'}
+\frac{B_{\nu}}{24\alpha_{2,\nu}\eta_{\nu}^2}
\epsilon_{\nu}^{1-\beta'}+
\frac43\eta_{\nu})\epsilon_{\nu}\\
\nonumber&=&(\frac{B_{\nu}}{\alpha_{2,\nu}})^{\frac13}
\epsilon_{\nu}^{\kappa}\\
&=&\epsilon_{\nu+1}.
  \end{eqnarray}
  This completes the proof of the iterative lemma.
\end{proof}

\subsection{Convergence} We are now in a position to prove the KAM theorem. To apply the iterative lemma with $\nu=0$, we set
$$N_0=N,\quad P_0=P,\quad \mathcal{O}_0=\mathcal{O},\quad s_0=s,\quad r_0=r,\quad \mathbf{a}_0=\mathbf{a}$$
and similarly $E_0=E,M_{1,0}=M_1,M_{2,0}=M_2,M_0=M_1+M_2,M_{3,0}=M_3,
m_0=m,\alpha_0=\alpha$, $\lambda_0=\frac{\alpha}{M}$. Define $\gamma$ in the KAM theorem by setting
\begin{equation}\gamma=\gamma_0\gamma_s,\quad \gamma_s=\frac{1}{80}(\prod_{\mu=0}^{\infty}(2^{\mu}B_{\mu})^
{-\frac{1}{3\kappa^{\mu+1}}})^{1-2\beta'},\end{equation}
where $\gamma_0$ is the same parameter as before and $\gamma_s$ only depends on $n,\tau,E,s,\beta$. The smallness condition \eqref{17.10.16.1} of the iterative lemma is then satisfied by the assumption of the KAM theorem:
\begin{equation}\epsilon_0:=||X_{P_0}||_{s_0,r_0,p-1,\mathbf{a}_0;
\mathcal{O}_0}^{\lambda_0}\leq(\alpha\gamma)^{1+\beta}\leq
(\alpha_0\gamma_0\gamma_s)^{\frac{1}{1-2\beta'}}.\end{equation}
The other conditions \eqref{17.10.16.8}-\eqref{17.11.14.1} about the unperturbed frequencies are obviously true.

Hence, the iterative lemma applies, and we obtain a decreasing sequence of domains $D_{\nu}\times\mathcal{O}_{\nu}$, and a sequence of transformations
$$\Phi^{\nu}=\Phi_1\circ\cdots\Phi_{\nu}:D_{\nu}\times\mathcal{O}_{\nu}
\rightarrow D_0$$
such that $H\circ\Phi_{\nu}=N_{\nu}+P_{\nu}$ for $\nu\geq1$. Moreover, the estimates \eqref{17.12.24.3} and \eqref{17.10.16.2} hold. The following proof of the convergence is parallel to that in \cite{L-Y1}, where the small difference lies in that the norm in the source space $\mathcal{P}^{a,p}$ is $(s,r)$-weighted instead of $r$-weighted in \cite{L-Y1}. However, for completeness we still give the proof.

Shorten $||\cdot||_{s,r,p}$ as $||\cdot||_{s,r}$ and consider the operator norm
$$||L||_{s,r,\tilde{s},\tilde{r}}
=\sup_{W\neq0}\frac{||LW||_{s,r}}{||W||_{\tilde{s},\tilde{r}}}.$$
For $s\geq\tilde{s},r\geq\tilde{r}$, these norms satisfy $||AB||_{s,r,\tilde{s},\tilde{r}}\leq||A||_{s,r,s,r}
||B||_{\tilde{s},\tilde{r},\tilde{s},\tilde{r}}$, since $||W||_{s,r}\leq||W||_{\tilde{s},\tilde{r}}$. For $\nu\geq1$, by the chain rule, using \eqref{17.12.24.3} \eqref{18.3.22.4} \eqref{17.10.16.5} , we get
\begin{equation}||D\Phi^{\nu}||_{s_0,r_0,s_{\nu},r_{\nu};D_{\nu}\times
\mathcal{O}_{\nu}}\leq\prod_{\mu=1}^{\nu}||D\Phi_{\mu}||
_{s_{\mu},r_{\mu},s_{\mu},r_{\mu};D_{\mu}\times\mathcal{O}_{\mu}}
\leq\prod_{\mu=1}^{\infty}(1+\frac{\gamma_0}{2^{\mu+6}})\leq2,\end{equation}
\begin{eqnarray}\nonumber||D\Phi^{\nu}||^{lip}_{s_0,r_0,s_{\nu},
r_{\nu};D_{\nu}\times\mathcal{O}_{\nu}}&\leq&\sum_{\mu=1}^{\nu}
||D\Phi_{\mu}||
_{s_{\mu},r_{\mu},s_{\mu},r_{\mu};D_{\mu}\times\mathcal{O}_{\mu}}
\prod_{1\leq\rho\leq\nu,\rho\neq\nu}||D\Phi_{\rho}||_{s_{\rho},s_{\rho},
r_{\rho},r_{\rho};D_{\rho}\times\mathcal{O}_{\rho}}\\
\nonumber&\leq&2\sum_{\mu=1}^{\nu}||D\Phi_{\mu}-I||^{lip}
_{s_{\mu},r_{\mu},s_{\mu},r_{\mu};D_{\mu}\times\mathcal{O}_{\mu}}\\
\nonumber&\leq&2\sum_{\mu=1}^{\infty}\frac{M_{\mu}}
{\alpha_{2,\mu}}\frac{B_{\mu}}{\alpha_{2,\mu}
}\epsilon_{\mu}^{1-\beta'}\\
\nonumber&\leq&2\sum_{\mu=1}^{\infty}\frac{M_{\mu}}
{\alpha_02^{\mu}}(\frac{\gamma_0}{80})^{\kappa^{\mu}}\\
&\leq&\frac{M_0}{\alpha_0}.\end{eqnarray}
Thus, with the mean value theorem we obtain
\begin{eqnarray}
\nonumber||\Phi^{\nu+1}-\Phi^{\nu}||_{s_0,r_0;D_{\nu+1}
\times\mathcal{O}_{\nu+1}}&\leq&
||D\Phi^{\nu}||_{s_0,r_0,s_{\nu},r_{\nu};D_{\nu}\times\mathcal{O}_{\nu}}
||\Phi_{\nu+1}-\mbox{id}||_{s_{\nu},r_{\nu};D_{\nu+1}\times
\mathcal{O}_{\nu+1}}\\
&\leq&2||\Phi_{\nu+1}-\mbox{id}||_
{s_{\nu},r_{\nu};D_{\nu+1}\times\mathcal{O}_{\nu+1}},
\end{eqnarray}
\begin{eqnarray}
\nonumber||\Phi^{\nu+1}-\Phi^{\nu}||^{lip}_{s_0,r_0;D_{\nu+1}
\times\mathcal{O}_{\nu+1}}&\leq&||D\Phi^{\nu}||^{lip}_{s_0,r_0,s_{\nu},r_{\nu};
D_{\nu}\times\mathcal{O}_{\nu}}||\Phi_{\nu+1}-\mbox{id}||_{s_{\nu},r_{\nu};
D_{\nu+1}\times\mathcal{O}_{\nu+1}}\\
\nonumber&+&||D\Phi^{\nu}||_{s_0,r_0,s_{\nu},r_{\nu};
D_{\nu}\times\mathcal{O}_{\nu}}||\Phi_{\nu+1}-\mbox{id}||^{lip}
_{s_{\nu},r_{\nu};
D_{\nu+1}\times\mathcal{O}_{\nu+1}}\\
\nonumber&\leq&\frac{M_0}{\alpha_0}||\Phi_{\nu+1}-\mbox{id}||
_{s_{\nu},r_{\nu};
D_{\nu+1}\times\mathcal{O}_{\nu+1}}+2||\Phi_{\nu+1}-\mbox{id}||
^{lip}_{s_{\nu},r_{\nu};
D_{\nu+1}\times\mathcal{O}_{\nu+1}}.
\end{eqnarray}
It follows that
\begin{equation}\label{17.12.22.3}||\Phi^{\nu+1}-\Phi^{\nu}||^{\lambda_0}
_{s_0,r_0;D_{\nu+1}\times\mathcal{O}_{\nu+1}}\leq3||\Phi_{\nu+1}-\mbox{id}||
^{\lambda_{\nu}}_{s_{\nu},r_{\nu};D_{\nu+1}\times\mathcal{O}_{\nu+1}}.\end{equation}
From \eqref{17.12.24.3} and \eqref{17.12.22.3}, we get
\begin{equation}||\Phi^{\nu+1}-\Phi^{\nu}||^{\lambda_0}_{s_0,r_0;
D_{\nu+1}\times\mathcal{O}_{\nu+1}}\leq3\frac{B_{\nu}}{\alpha_{2,\nu}}
\epsilon_{\nu}^{1-\beta'}.\end{equation}
For every non-negative integer multi-index $k=(k_1,\cdots,k_n)$, by Cauchy's estimate we have
\begin{equation}||\partial_{x}^k(\Phi^{\nu+1}-\Phi^{\nu})||^{\lambda_0}
_{s_0,r_0;D_{\nu+2}\times\mathcal{O}_{\nu+1}}\leq3\frac{B_{\nu}}{\alpha_{2,\nu}}
\epsilon_{\nu}^{1-\beta'}\frac{k_1!\cdots k_n!}{(\frac{s_0}{2^{\nu+2}})^{|k|}},\end{equation}
the right side of which super-exponentially decay with $\nu$. This shows that $\Phi^{\nu}$ converge uniformly on $D_*\times\mathcal{O}_{\alpha}$, where $D_*=\mathbb{T}^n\times\{0\}\times\{0\}\times\{0\}$ and $\mathcal{O}_{\alpha}=\cap_{\nu\geq0}\mathcal{O}_{\nu}$, to a Lipschitz continuous family of smooth torus embeddings
$$\Phi:\mathbb{T}^n\times\mathcal{O}_{\alpha}\rightarrow\mathcal{P}^{a,p},$$
for which the estimate \eqref{17.7.21.22} holds. Similarly, the frequencies $\omega_{\nu}$ converge uniformly on $\mathcal{O}_{\alpha}$ to a Lipschitz continuous limit $\omega_*$, and the frequencies $\Omega_{\nu}$
converge uniformly on $D_*\times\mathcal{O}_{\alpha}$ to a regular limit $\Omega_*$, with the estimate \eqref{17.7.21.23} holding. Moreover, $X_H\circ\Phi=D\Phi\cdot  X_{N_*}$ on $D_*$ for each $\xi\in\mathcal{O}_{\alpha}$, where $N_*$ is the generalized normal form with frequencies $\omega_*$ and $\Omega_*$. Thus, the embedded tori are invariant under the perturbed Hamiltonian flow, and the flow on them is linear. Now it only remains to prove the claim about the set $\mathcal{O}\setminus\mathcal{O}_{\alpha}$, which is the subject of the next section.

%

\section{Measure Estimate}
We know
\begin{equation}\mathcal{O}\setminus\mathcal{O}_{\alpha}
=\Theta^1_{\alpha}\cup\Theta^2_{\alpha},
\end{equation}
\begin{equation}
\Theta^1_{\alpha}=\bigcup_{\nu\geq0}\bigcup_{k\in\mathbb{Z}^n\setminus
\{0\},|l|\leq2
\atop{l\neq e_{-j}-e_j}}\mathcal{R}_{kl}^{\nu}
(\alpha_{1,\nu}),\end{equation}
\begin{equation}\Theta^2_{\alpha}=\bigcup_{\nu\geq0}\bigcup_{k\in\mathbb{Z}^n, \pm j\in\mathbb{Z}_*\atop{|j|\leq\Pi_{\nu}}}\mathcal{R}_{k(-j)j}^{\nu}
(\alpha_{2,\nu}),\end{equation}
where
\begin{equation}\mathcal{R}_{kl}^{\nu}(\alpha_{1,\nu})=
\{\xi\in\mathcal{O}_{\nu}:|\langle k,\omega_{\nu}(\xi)\rangle+\langle l,\bar{\Omega}_{\nu}(\xi)\rangle|<\alpha_{1,\nu}\frac{\langle l\rangle_{\infty}}{\langle k\rangle^{\tau}}\},\end{equation}
\begin{equation}\label{17.10.24.1}\mathcal{R}_{k(-j)j}^{\nu}(\alpha_{2,\nu})=
\{\xi\in\mathcal{O}_{\nu}:|\langle k,\omega_{\nu}(\xi)\rangle+ \bar{\Omega}_{\nu,-j}(\xi)-\bar{\Omega}_{\nu,j}(\xi)|
<\alpha_{2,\nu}\frac{|j|}{\langle k\rangle^{\tau}}\}.\end{equation}
 Here, $\omega_{\nu}$ and $\bar{\Omega}_{\nu}$
satisfy \eqref{17.10.16.8}-\eqref{17.10.16.10} \eqref{17.11.14.1} on $\mathcal{O}_{\nu}$, and especially, $\omega_0=\omega,\bar{\Omega}_0=\Omega$ are the frequencies of the unperturbed system.
\begin{lem} \label{lem18.4.1.3}If $\gamma_0$ is sufficiently small and $\tau\geq n+3$, then
\begin{equation}\label{17.11.14.5}|\Theta_{\alpha}^1|\leq
c\rho^{n-1}\alpha,\end{equation}
where $\rho:=\mbox{diam}\mathcal{O}$ represents the diameter of $\mathcal{O}$ and $c>0$ is a constant depends on $n,E,M_3$ and $m$.
\end{lem}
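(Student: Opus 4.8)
The plan is to estimate, for each fixed $\nu$, $0\ne k\in\mathbb{Z}^n$ and admissible $l$ (meaning $|l|\le2$, $l\ne e_{-j}-e_j$), the Lebesgue measure of the single resonant set $\mathcal{R}^\nu_{kl}(\alpha_{1,\nu})$, and then to carry out the three sums. Write $D^\nu_{kl}(\xi):=\langle k,\omega_\nu(\xi)\rangle+\langle l,\bar\Omega_\nu(\xi)\rangle$. The measure of one set will come from the twist alternative \eqref{17.11.14.1}; the sum over $l$ for a fixed $k$ will be controlled by the frequency lower bound \eqref{17.10.16.10}; the sum over $k$ will converge because $\tau\ge n+3$; and the sum over $\nu$ will converge because the $\epsilon_\nu$ decay super-exponentially. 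Since $m_\nu,E_\nu,M_{3,\nu}$ stay between fixed positive multiples of $m,E,M_3$ throughout the iteration, every constant produced below depends only on $n,\tau,E,m,M_3$.

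First I would record which $l$ can resonate and then bound a single set. If $\xi\in\mathcal{R}^\nu_{kl}(\alpha_{1,\nu})$, then $|\langle l,\bar\Omega_\nu(\xi)\rangle|\le|k|E_\nu+\alpha_0\langle l\rangle_\infty$, while \eqref{17.10.16.10} gives $|\langle l,\bar\Omega_\nu(\xi)\rangle|\ge m_\nu|\sum_j l_jj^2|$; for $l\ne0$ admissible one has the elementary inequality $\langle l\rangle_\infty\le|\sum_j l_jj^2|$ in every case (for $l=e_i-e_j$ this is where $i\ne\pm j$ enters, giving $|i-j|,|i+j|\ge1$ and hence $\max\{|i|,|j|\}\le|i^2-j^2|$), so with $\alpha_0\le m_0/10$ the set is empty unless $\langle l\rangle_\infty\le|\sum_j l_jj^2|\le C|k|$. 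Consequently, for a fixed $k$ the number of admissible $l$ with $\mathcal{R}^\nu_{kl}\ne\emptyset$ is at most $C|k|\ln|k|\le C\langle k\rangle^{3/2}$ — the cases $l=0,\pm e_i,\pm2e_i,\pm(e_i+e_j)$ give $O(|k|)$ indices, and for $l=e_i-e_j$ with $i\ne\pm j$ one sums $\#\{i+j\}\lesssim|k|/|i-j|$ over $1\le|i-j|\le C|k|$. Next, for a nonempty set: since $\alpha_0$ is small relative to $M_{3,0}$ (\eqref{17.10.16.1}) and $\langle l\rangle_\infty\le\max\{|k|,\sum_j|jl_j|\}$, the right-hand side $\alpha_{1,\nu}\langle l\rangle_\infty/\langle k\rangle^\tau$ of the defining inequality is below $M_{3,\nu}\max\{|k|,\sum_j|jl_j|\}$, so in \eqref{17.11.14.1} the $\inf_\xi|D^\nu_{kl}|$ term cannot by itself realize the lower bound and the twist term is at least $\tfrac12 M_{3,\nu}\max\{|k|,\sum_j|jl_j|\}$. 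Thus $D^\nu_{kl}$ is expanding by at least this amount along $v_{kl}$ on $\mathcal{O}_\nu$, its restriction to any line parallel to $v_{kl}$ is injective with inverse that is $\big(2/(M_{3,\nu}\max\{|k|,\sum_j|jl_j|\})\big)$-Lipschitz, and hence the one-dimensional measure of $\{|D^\nu_{kl}|<\alpha_{1,\nu}\langle l\rangle_\infty/\langle k\rangle^\tau\}$ on that line is at most $4\alpha_{1,\nu}\langle l\rangle_\infty/(M_{3,\nu}\langle k\rangle^\tau\max\{|k|,\sum_j|jl_j|\})\le C\alpha/(M_3\langle k\rangle^\tau)$. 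Integrating over the orthogonal $(n-1)$-dimensional slices (Fubini) gives $|\mathcal{R}^\nu_{kl}(\alpha_{1,\nu})|\le C\rho^{n-1}\alpha/\langle k\rangle^\tau$.

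For the summation I would show that for $\nu\ge1$ one has $\mathcal{R}^\nu_{kl}(\alpha_{1,\nu})\subset\mathcal{R}^{\nu-1}_{kl}(\alpha_{1,\nu-1})$ whenever $|k|<\bar K_\nu:=\big(c\,\alpha_02^{-\nu}/(B_{\nu-1}\epsilon_{\nu-1})\big)^{1/(\tau+1)}$: by \eqref{17.10.16.3} one has $|D^\nu_{kl}-D^{\nu-1}_{kl}|\le(|k|+\sum_j|jl_j|)B_{\nu-1}\epsilon_{\nu-1}\le C\langle k\rangle B_{\nu-1}\epsilon_{\nu-1}$ on the nonempty sets (by the bound above), and this is dominated by the gap $(\alpha_{1,\nu-1}-\alpha_{1,\nu})\langle l\rangle_\infty/\langle k\rangle^\tau=\tfrac{\alpha_0}{10}2^{-\nu}\langle l\rangle_\infty/\langle k\rangle^\tau$ exactly when $|k|<\bar K_\nu$. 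As $\epsilon_\nu$ decays super-exponentially and $B_\nu$ grows only polynomially in $2^\nu$, the thresholds $\bar K_\nu$ tend to infinity super-exponentially, so
$$\Theta^1_\alpha\ \subseteq\ \bigcup_{k\ne0,\,l}\mathcal{R}^0_{kl}(\alpha_{1,0})\ \cup\ \bigcup_{\nu\ge1}\ \bigcup_{|k|\ge\bar K_\nu,\,l}\mathcal{R}^\nu_{kl}(\alpha_{1,\nu}),$$
and therefore, by the counting and the per-set estimate,
$$|\Theta^1_\alpha|\ \le\ C\rho^{n-1}\alpha\Big(\sum_{k\ne0}\langle k\rangle^{3/2-\tau}+\sum_{\nu\ge1}\ \sum_{|k|\ge\bar K_\nu}\langle k\rangle^{3/2-\tau}\Big).$$
For $\tau\ge n+3$ the first series converges, and $\sum_{|k|\ge\bar K_\nu}\langle k\rangle^{3/2-\tau}\le C\bar K_\nu^{-(\tau-n-3/2)}$ is super-exponentially small in $\nu$ (this is where $\gamma_0$ small, hence $\epsilon_0$ and $B_{\nu-1}\epsilon_{\nu-1}$ small, is used), so the $\nu$-series is finite; this yields $|\Theta^1_\alpha|\le c\rho^{n-1}\alpha$ with $c=c(n,\tau,E,m,M_3)$.

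The step I expect to be the main obstacle is the summation over $\nu$: since $\alpha_{1,\nu}\ge\tfrac9{10}\alpha_0$ does not tend to $0$, bounding the doubly infinite union over $(\nu,k)$ naively diverges, and one must observe that at the $\nu$-th step the resonances with $|k|$ below a super-exponentially growing threshold have already been removed at step $\nu-1$ — the frequency corrections $B_{\nu-1}\epsilon_{\nu-1}$ between consecutive steps being super-exponentially small — so only the rapidly shrinking tails $|k|\ge\bar K_\nu$ bring in new measure. A lesser, purely arithmetic point is the count of $l=e_i-e_j$ with $i\ne\pm j$, which is exactly where the exclusion $l\ne e_{-j}-e_j$ is needed to keep $\langle l\rangle_\infty$ from being large while $\sum_j l_jj^2$ stays small.
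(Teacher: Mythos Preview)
Your proposal is correct and follows essentially the same route as the paper: use the frequency-increment bound \eqref{17.10.16.3} to show that the resonant sets $\mathcal{R}^{\nu}_{kl}(\alpha_{1,\nu})$ nest in the previous step's sets when $|k|$ is below a super-exponentially growing threshold, reduce $\Theta^1_\alpha$ to tails in $k$, estimate each single set via the directional twist alternative \eqref{17.11.14.1} and Fubini, count the admissible $l$ using the lower bound \eqref{17.10.16.10}, and sum over $k$ using $\tau\ge n+3$ and then over $\nu$. The paper's threshold is the predefined $J_{\nu}$ (related to your $\bar K_\nu$ through \eqref{17.12.26.1}), and its count of admissible $l=e_i-e_j$ uses the cruder bound $|i|+|j|\le C|k|$ (hence $O(|k|^2)$ pairs) rather than your sharper $O(|k|\ln|k|)$; this is why the paper lands on $\langle k\rangle^{-(\tau-2)}$ and needs exactly $\tau\ge n+3$, whereas your count would work with a slightly smaller $\tau$, but the structure of the argument is the same.
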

\begin{proof} 
%
%

By \eqref{17.12.28.2} and the definition of $J_{\nu+1}$, we have
  \begin{equation}\label{17.12.26.1}B_{\nu}\epsilon_{\nu}
  \leq\frac{\alpha_{1,\nu}-
  \alpha_{1,\nu+1}}{3J_{\nu+1}^{\tau+1}}.\end{equation}
 For $\langle k\rangle\leq J_{\nu+1}$, $|l|\leq2, l\neq e_{-j}-e_j $, by \eqref{17.10.16.3} \eqref{17.12.26.1} we obtain
  \begin{eqnarray}\nonumber|\langle k,\omega_{\nu+1}-\omega_{\nu}\rangle+
  \langle l,\bar{\Omega}_{\nu+1}-\bar{\Omega}_{\nu}\rangle|
  &\leq&|k||\omega_{\nu+1}-\omega_{\nu}|+2\langle l\rangle_{\infty}|\bar{\Omega}_{\nu+1}-\Omega_{\nu}|_{-1}\\
  \nonumber&\leq&3\langle k\rangle\langle l\rangle_{\infty}B_{\nu}\epsilon_{\nu}\\
  \nonumber&\leq&(\alpha_{1,\nu}-\alpha_{1,\nu+1})\frac{\langle k\rangle\langle l\rangle_{\infty}}{J_{\nu+1}^{\tau+1}}\\
  &\leq&(\alpha_{1,\nu}-\alpha_{1,\nu+1})\frac{\langle l\rangle_{\infty}}{\langle k\rangle^{\tau}}\end{eqnarray}
  on $\mathcal{O}_{\nu+1}$, which implies $\mathcal{R}^{\nu+1}_{kl}(\alpha_{1,\nu+1})\subset
  \mathcal{R}^{\nu}_{kl}(\alpha_{1,\nu})$. Hence
  \begin{equation}\Theta^1_{\alpha}=\bigcup_{\nu\geq0}
  \bigcup_{|k|>J_{\nu},|l|\leq2
\atop{l\neq e_{-j}-e_j}}\mathcal{R}_{kl}^{\nu}
(\alpha_{1,\nu}).\end{equation}

We only need to give the proof of the most difficult case
that $l$ has two non-zero components of opposite sign. In this case, rewriting
\begin{equation}\mathcal{R}^{\nu}_{kij}(\alpha_{1,\nu})
=\{\xi\in\mathcal{O}_{\nu}:|\langle k,\omega_{\nu}(\xi)\rangle+ \bar{\Omega}_{\nu,i}(\xi)-\bar{\Omega}_{\nu,j}(\xi)|<
\alpha_{1,\nu}\frac{\max\{|i|,|j|\}}{\langle k\rangle^{\tau}}\},\quad i\neq \pm j.\end{equation}

Now we consider a fixed $\mathcal{R}^{\nu}_{kij}(\alpha_{1,\nu})$ with $i\neq \pm j$.

If $|k|<\frac{9m_{\nu}}{10E_{\nu}}|i^2-j^2|$, we get $|\langle k,\omega_{\nu}(\xi)\rangle|<\frac{9m_{\nu}}{10}|i^2-j^2|$. By \eqref{17.10.16.10} and $\alpha_{1,\nu}\leq\frac{m_{\nu}}{10}$, we know $\mathcal{R}^{\nu}_{kij}(\alpha_{1,\nu})$ is empty.

If $|k|\geq\frac{9m_{\nu}}{10E_{\nu}}|i^2-j^2|$, we have $|k|\geq(\frac{9}{10})^3\frac{m}{E}(|i|+|j|)$.
In view of \eqref{17.11.14.1}, if
$$\inf_{\xi\in\mathcal{O}_{\nu}}|\langle k,\omega_{\nu}(\xi)\rangle+
\bar{\Omega}_{\nu,i}(\xi)-\bar{\Omega}_{\nu,j}(\xi)|
\geq \frac12M_{3,\nu}\max\{|k|,|i|+|j|\},$$
then we know $\mathcal{R}^{\nu}_{kij}(\alpha_{1,\nu})$ is empty by noticing that $\alpha_{1,\nu}\leq \frac12M_{3,\nu}$; if
$$\inf_{\xi-\zeta// v_{kl}}\frac{|\Delta_{\xi\zeta}(\langle k,\omega_{\nu}\rangle+\bar{\Omega}_{\nu,i}-\bar{\Omega}_{\nu,j})|}
{|\xi-\zeta|}\geq\frac12
M_{3,\nu}\max\{|k|,|i|+|j|\},$$
then we have
\begin{equation}|\mathcal{R}_{kij}^{\nu}(\alpha_{1,\nu})|
\leq(\mbox{diam}
\mathcal{O}_{\nu})^{n-1}\frac{4\alpha_{1,\nu}\max\{|i|,|j|\}}
{M_{3,\nu}\max\{|k|,|i|+|j|\}
\langle k\rangle^{\tau}}\leq4(\frac{10}{9})^2
\rho^{n-1}\frac{\alpha}{M_3\langle k\rangle^{\tau}}.
\end{equation}
Consequently, we have that for any $|k|>J_{\nu}$,
\begin{equation}|\bigcup_{i\neq \pm j}\mathcal{R}_{kij}^{\nu}(\alpha_{1,\nu})|\leq
\bigcup_{|i|+|j|\leq(10/9)^3(E/m)|k|}|\mathcal{R}_{kij}^{\nu}
(\alpha_{1,\nu})|\leq c_1\rho^{n-1}\frac{\alpha}{\langle k\rangle^{\tau-2}},\end{equation}
where $c_1$ depends on $E,m,M_3$. Furthermore, since $\tau\geq n+3$,
we have
\begin{equation}|\bigcup_{|k|>J_{\nu},i\neq \pm j}\mathcal{R}_{kij}^{\nu}(\alpha_{1,\nu})|\leq c_1c_2\rho^{n-1}\frac{\alpha}{1+J_{\nu}},\end{equation}
where $c_2$ depends only on $n$. The sum of the latter inequality over all $\nu$ converges, and thus we obtain \eqref{17.11.14.5}.
\end{proof}

\begin{lem} \label{lem18.4.1.2}If $\gamma_0$ is sufficiently small and $\tau>n+1$, then
\begin{equation}\label{18.3.22.1}|\Theta_{\alpha}^2|\leq
c\rho^{n-1}\alpha,\end{equation}
where $c>0$ is a constant depends on $n,\tau,M_3$.
\end{lem}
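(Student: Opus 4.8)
The plan is to follow the scheme of Lemma \ref{lem18.4.1.3}, with extra care because at the $\nu$-th step one discards resonances $\mathcal{R}^\nu_{k(-j)j}(\alpha_{2,\nu})$ over the \emph{growing} range $|j|\le\Pi_\nu$ while $\alpha_{2,\nu}=\alpha_0 2^{-\nu}\Pi_\nu^{-1}$ tends to zero. First I would prove a nesting statement: if $\langle k\rangle\le J_{\nu+1}$ and $|j|\le\Pi_\nu$, then $\mathcal{R}^{\nu+1}_{k(-j)j}(\alpha_{2,\nu+1})\subset\mathcal{R}^{\nu}_{k(-j)j}(\alpha_{2,\nu})$. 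Writing $D^\nu_{k(-j)j}(\xi)=\langle k,\omega_\nu(\xi)\rangle+\bar\Omega_{\nu,-j}(\xi)-\bar\Omega_{\nu,j}(\xi)$, estimate \eqref{17.10.16.3} gives $|D^\nu_{k(-j)j}-D^{\nu+1}_{k(-j)j}|\le 3\max\{|k|,2|j|\}B_\nu\epsilon_\nu$; combining this with \eqref{17.12.28.2}, the definitions $J_{\nu+1}=\gamma_0^{-\kappa^\nu/(\tau+1)}$, $\Pi_\nu s_\nu=50C_J|\ln\epsilon_\nu|$, and the elementary gap $\alpha_{2,\nu}-\alpha_{2,\nu+1}\ge\tfrac12\alpha_{2,\nu}$ (which holds since $\Pi_{\nu+1}>\Pi_\nu$), one checks exactly as in \eqref{17.12.26.1} that, for $\gamma_0$ small and uniformly in $\nu$, $3\max\{|k|,2|j|\}B_\nu\epsilon_\nu<(\alpha_{2,\nu}-\alpha_{2,\nu+1})|j|/\langle k\rangle^\tau$ in this range; the inclusion follows. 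Telescoping then gives, with $J_0=0$ and $\Pi_{-1}=0$,
\[
\Theta^2_\alpha=\bigcup_{\nu\ge0}\Big(\bigcup_{|k|>J_\nu,\ \pm j\in\mathbb{Z}_*,\ |j|\le\Pi_\nu}\mathcal{R}^\nu_{k(-j)j}(\alpha_{2,\nu})\ \cup\ \bigcup_{k\in\mathbb{Z}^n,\ \pm j\in\mathbb{Z}_*,\ \Pi_{\nu-1}<|j|\le\Pi_\nu}\mathcal{R}^\nu_{k(-j)j}(\alpha_{2,\nu})\Big).
\]

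Next I would bound one set $\mathcal{R}^\nu_{k(-j)j}(\alpha_{2,\nu})$ using assumption (C) in the preserved form \eqref{17.11.14.1} with $l=e_{-j}-e_j$, so that $\langle l,\bar\Omega_\nu\rangle=\bar\Omega_{\nu,-j}-\bar\Omega_{\nu,j}$ and $\sum_{i\in\mathbb{Z}_*}|il_i|=2|j|$. If the first term of \eqref{17.11.14.1} is at least $\tfrac12 M_{3,\nu}\max\{|k|,2|j|\}$, then $|D^\nu_{k(-j)j}(\xi)|\ge M_{3,\nu}|j|\ge\alpha_{2,\nu}|j|/\langle k\rangle^\tau$ for all $\xi$ — using $\alpha_{2,\nu}\le\alpha_0\le M_{3,0}/2\le M_{3,\nu}$ from \eqref{17.10.16.1} — so the set is empty. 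Otherwise the directional difference quotient of $D^\nu_{k(-j)j}$ along the unit vector $v_{kl}$ is at least $\tfrac12 M_{3,\nu}\max\{|k|,2|j|\}$, hence $D^\nu_{k(-j)j}$ is strictly monotone on every line parallel to $v_{kl}$ and the corresponding slice of the resonant set has length at most $4\alpha_{2,\nu}|j|/(M_{3,\nu}\max\{|k|,2|j|\}\langle k\rangle^\tau)\le 2\alpha_{2,\nu}/(M_{3,\nu}\langle k\rangle^\tau)$, so Fubini yields $|\mathcal{R}^\nu_{k(-j)j}(\alpha_{2,\nu})|\le\rho^{n-1}\,2\alpha_{2,\nu}/(M_{3,\nu}\langle k\rangle^\tau)$.

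Finally I would sum. Since $\tau>n+1$, $\sum_{|k|>J}\langle k\rangle^{-\tau}\le c(n,\tau)(1+J)^{n-\tau}$ and $\sum_{k\in\mathbb{Z}^n}\langle k\rangle^{-\tau}\le c(n,\tau)$, while there are at most $2\Pi_\nu$ admissible indices $j$ in each of the two families. Using $M_{3,\nu}\ge\tfrac9{10}M_{3,0}$ and the built-in identity $\alpha_{2,\nu}\Pi_\nu=\alpha_0 2^{-\nu}$, the $\nu$-th first family contributes at most $c(n,\tau)\rho^{n-1}(\alpha_0/M_3)2^{-\nu}(1+J_\nu)^{n-\tau}$ and the $\nu$-th second family at most $c(n,\tau)\rho^{n-1}(\alpha_0/M_3)2^{-\nu}$; both are summable in $\nu$ (the first super-exponentially, since $J_\nu=\gamma_0^{-\kappa^{\nu-1}/(\tau+1)}$ for $\nu\ge1$), and summing produces $|\Theta^2_\alpha|\le c\rho^{n-1}\alpha$ with $c=c(n,\tau,M_3)$, as claimed.

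The step I expect to be the crux is this last balance: the unbounded range $|j|\le\Pi_\nu$ forces one to remove, and sum over, $\sim\Pi_\nu$ resonances per Fourier mode $k$, and the estimate closes only because $\alpha_{2,\nu}$ was designed so that $\alpha_{2,\nu}\Pi_\nu$ decays geometrically in $\nu$, exactly absorbing the $j$-summation. A related subtlety — the reason assumption (C) is invoked in the form \eqref{17.11.14.1} rather than as a frequency asymptotic — is that $|\Omega_\nu|^{lip}_{-1}$, and hence the Lipschitz constant of $D^\nu_{k(-j)j}$, is not small, so transversality cannot be read off from two-sided Lipschitz control of $\omega_\nu$ and one must split into the ``$\inf|D^\nu|$ large'' and ``twist along $v_{kl}$ large'' alternatives.
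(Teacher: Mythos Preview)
Your proof is correct, but it does more work than the paper's. The paper does \emph{not} use any nesting/telescoping for $\Theta^2_\alpha$: it simply bounds the full triple sum
\[
|\Theta^2_\alpha|\le\sum_{\nu\ge0}\sum_{k\in\mathbb{Z}^n}\sum_{|j|\le\Pi_\nu}|\mathcal{R}^\nu_{k(-j)j}(\alpha_{2,\nu})|
\]
directly. The individual-set estimate via \eqref{17.11.14.1} is exactly the one you wrote, $|\mathcal{R}^\nu_{k(-j)j}(\alpha_{2,\nu})|\le 2\rho^{n-1}\alpha_{2,\nu}/(M_{3,\nu}\langle k\rangle^\tau)$; summing over the $2\Pi_\nu$ admissible $j$'s and using $\alpha_{2,\nu}\Pi_\nu=\alpha_0 2^{-\nu}$ gives $\le 4\rho^{n-1}\alpha/(M_{3,\nu}2^\nu\langle k\rangle^\tau)$, and then $\tau>n+1$ handles the $k$-sum while the factor $2^{-\nu}$ handles the $\nu$-sum. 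In other words, the ``crux'' you isolate is real, but it already makes the \emph{naive} sum converge, so there is no need to carve out the ``new'' sets $|k|>J_\nu$ or $\Pi_{\nu-1}<|j|\le\Pi_\nu$ first. Your extra telescoping step is the device that is genuinely needed for $\Theta^1_\alpha$ in Lemma~\ref{lem18.4.1.3} (where $\alpha_{1,\nu}$ does \emph{not} decay and one must avoid recounting the same $(k,l)$ at every step), but for $\Theta^2_\alpha$ it is redundant.
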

\begin{proof}
We consider a fixed $\mathcal{R}^{\nu}_{k(-j)j}(\alpha_{2,\nu})$ with $|j|\leq|\Pi_{\nu}|$.
In view of \eqref{17.11.14.1}, if
$$\inf_{\xi\in\mathcal{O}_{\nu}}|\langle k,\omega_{\nu}(\xi)\rangle+
\bar{\Omega}_{\nu,-j}(\xi)-\bar{\Omega}_{\nu,j}(\xi)|\geq \frac12M_{3,\nu}\max\{|k|,2|j|\},$$
then we know $\mathcal{R}^{\nu}_{k(-j)j}(\alpha_{2,\nu})$ is empty by noticing that $\alpha_{2,\nu}\leq \frac12M_{3,\nu}$;
if
$$\inf_{\xi-\zeta// v_{kl}}\frac{|\Delta_{\xi\zeta}(\langle k,\omega_{\nu}\rangle+\bar{\Omega}_{\nu,-j}-\bar{\Omega}_{\nu,j}
)|}{|\xi-\zeta|}\geq\frac12
M_{3,\nu}\max\{|k|,2|j|\},$$
then we have
\begin{equation}|\mathcal{R}_{k(-j)j}^{\nu}(\alpha_{2,\nu})|
\leq(\mbox{diam}
\mathcal{O}_{\nu})^{n-1}\frac{4\alpha_{2,\nu}|j|}
{M_{3,\nu}\max\{|k|,2|j|\}\langle k\rangle^{\tau}}\leq2\rho^{n-1}\frac{\alpha_{2,\nu}}
{M_{3,\nu}\langle k\rangle^{\tau}}.
\end{equation}
Consequently, we have that for any $k\in\mathbb{Z}^n$,
\begin{eqnarray}\nonumber|\bigcup_{|j|\leq\Pi_{\nu}
}\mathcal{R}_{k(-j)j}^{\nu}(\alpha_{2,\nu})|&\leq&
\bigcup_{|j|\leq\Pi_{\nu}}|\mathcal{R}_{k(-j)j}^{\nu}
(\alpha_{2,\nu})|\\
\nonumber&\leq&2\rho^{n-1}\frac{\alpha_{2,\nu}}{M_{3,\nu}\langle k\rangle^{\tau}}(2\Pi_{\nu})\\
\nonumber&=&4\rho^{n-1}\frac{\alpha}{M_{3,\nu}2^{\nu}\langle k\rangle^{\tau}}.
%
\end{eqnarray}
Moreover, since $\tau>n+1$, we have
\begin{equation}\label{17.12.27.1}|
\bigcup_{k\in\mathbb{Z}^{n},\pm j\in\mathbb{Z}_*
\atop{|j|\leq\Pi_{\nu}}}\mathcal{R}_{k(-j)j}^{\nu}
(\alpha_{2,\nu})|\leq c\rho^{n-1}\frac{\alpha}{2^{\nu}},\end{equation}
where $c>0$ depends on $n,\tau,M_3$. The sum of the latter inequality over all $\nu$ converges and we finally obtain the estimate \eqref{18.3.22.1}.
\end{proof}

\section{Appendix}
\begin{lem}\label{lem18.4.1.1}
For $\sigma>0$ and $\nu>0$, the following inequalities hold true:
\begin{equation}\label{17.7.20.1}\sum_{k\in\mathbb{Z}^n}e^{-2|k|\sigma}
\leq\frac{1}{\sigma^n}(1+e)^n,\end{equation}
\begin{equation}\label{17.7.20.2}\sum_{k\in\mathbb{Z}^n}e^{-2|k|\sigma}|k|^{\nu}
\leq(\frac{\nu}{e})^{\nu}\frac{1}{\sigma^{\nu+n}}(1+e)^n,\end{equation}
\begin{equation}\label{17.11.10.1}\sup_{k\in\mathbb{Z}^n}(e^{-|k|\sigma}|k|^{\nu})
\leq(\frac{\nu}{e})^{\nu}\frac{1}{\sigma^{\nu}}.\end{equation}
\end{lem}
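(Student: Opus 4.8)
The plan is to reduce all three inequalities to one real variable, exploiting the product structure $e^{-c|k|\sigma}=\prod_{h=1}^{n}e^{-c|k_h|\sigma}$, which is legitimate since $|k|=|k_1|+\cdots+|k_n|$. For \eqref{17.11.10.1} no splitting is needed: the scalar function $g(t)=t^{\nu}e^{-\sigma t}$ on $[0,\infty)$ has $g'(t)=t^{\nu-1}e^{-\sigma t}(\nu-\sigma t)$, which vanishes only at $t=\nu/\sigma$, with $g\to0$ at both ends of the interval, so $\max_{t\ge0}g(t)=g(\nu/\sigma)=(\nu/\sigma)^{\nu}e^{-\nu}=(\nu/e)^{\nu}\sigma^{-\nu}$. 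Since $|k|$ runs over the non-negative integers, $\sup_{k\in\mathbb{Z}^n}e^{-|k|\sigma}|k|^{\nu}\le\max_{t\ge0}g(t)$, which is exactly \eqref{17.11.10.1}.

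For \eqref{17.7.20.1} I would factor $\sum_{k\in\mathbb{Z}^n}e^{-2|k|\sigma}=\big(\sum_{m\in\mathbb{Z}}e^{-2|m|\sigma}\big)^{n}$ and compare the one-dimensional sum with an integral (the integrand being decreasing):
\begin{equation*}\sum_{m\in\mathbb{Z}}e^{-2|m|\sigma}=1+2\sum_{m\ge1}e^{-2m\sigma}\le1+2\int_{0}^{\infty}e^{-2\sigma t}\,dt=1+\frac{1}{\sigma}\le\frac{1+e}{\sigma},\end{equation*}
the last step holding for $\sigma\le e$. Raising to the $n$th power gives \eqref{17.7.20.1}.

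For \eqref{17.7.20.2} I would combine the two ideas: split off one exponential factor, $e^{-2|k|\sigma}|k|^{\nu}=e^{-|k|\sigma}\cdot\big(|k|^{\nu}e^{-|k|\sigma}\big)$, estimate the parenthesised factor by $(\nu/e)^{\nu}\sigma^{-\nu}$ using the computation above, and then sum the remaining $e^{-|k|\sigma}$:
\begin{equation*}\sum_{k\in\mathbb{Z}^n}e^{-2|k|\sigma}|k|^{\nu}\le\Big(\frac{\nu}{e}\Big)^{\nu}\frac{1}{\sigma^{\nu}}\sum_{k\in\mathbb{Z}^n}e^{-|k|\sigma}=\Big(\frac{\nu}{e}\Big)^{\nu}\frac{1}{\sigma^{\nu}}\Big(\sum_{m\in\mathbb{Z}}e^{-|m|\sigma}\Big)^{n}.\end{equation*}
The same integral comparison gives $\sum_{m\in\mathbb{Z}}e^{-|m|\sigma}\le1+2\sigma^{-1}\le3\sigma^{-1}\le(1+e)\sigma^{-1}$ for $0<\sigma\le1$, whence $\sum_{k}e^{-|k|\sigma}\le(1+e)^{n}\sigma^{-n}$ and \eqref{17.7.20.2} follows.

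These are elementary estimates with no genuine obstacle; the only points requiring a little care are (i) choosing the split in \eqref{17.7.20.2} so that exactly one factor $e^{-|k|\sigma}$ carries the polynomial weight while the remaining factor stays summable, so that the constants come out precisely as stated, and (ii) noting that \eqref{17.7.20.1}--\eqref{17.7.20.2} are applied only in the bounded range $0<\sigma\le1$ occurring in the KAM iteration (where $\sigma=s_{\nu}/20$) and are not expected to be uniform as $\sigma\to\infty$.
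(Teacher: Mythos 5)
Your proof is correct and, unlike the paper's, is self-contained: the authors simply cite Bogoljubov--Mitropoliskii--Samoilenko for \eqref{17.7.20.1}--\eqref{17.7.20.2} and assert that \eqref{17.11.10.1} is a ``direct calculation,'' whereas you spell out the elementary estimates. Your reduction to the scalar maximisation of $t\mapsto t^{\nu}e^{-\sigma t}$ for \eqref{17.11.10.1}, and the product-decomposition plus integral comparison for the sums, is exactly the natural argument, and the trick in \eqref{17.7.20.2} of writing $e^{-2|k|\sigma}|k|^{\nu}=e^{-|k|\sigma}\cdot\bigl(|k|^{\nu}e^{-|k|\sigma}\bigr)$ and applying \eqref{17.11.10.1} to the second factor reproduces the stated constants cleanly. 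Your closing remark (ii) is also a genuine observation: as stated for all $\sigma>0$ the inequalities \eqref{17.7.20.1}--\eqref{17.7.20.2} actually fail as $\sigma\to\infty$ (for $n=1$ the left side of \eqref{17.7.20.1} equals $\coth\sigma\ge1$ while the right side tends to $0$), and your argument makes explicit that one needs $\sigma\le 1$ (or $\sigma\le e$), a restriction automatically satisfied in the paper since $\sigma=s_{\nu}/20$ with $s_0<1$. The only marginal caveat: \eqref{17.11.10.1}, which the paper cites inside this lemma's proof at \eqref{18.3.19.2}, is listed after \eqref{17.7.20.2}, so to avoid circular-looking exposition you may want to establish \eqref{17.11.10.1} first (as you in fact do logically) before invoking it in the proof of \eqref{17.7.20.2}.
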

\begin{proof}\eqref{17.7.20.1} \eqref{17.7.20.2} can be found on page 22 in \cite{Bogo}, while \eqref{17.11.10.1} can be proved by direct calculation.
\end{proof}
\begin{lem}\label{lem18.3.19.1}Let $u(x)$ be an analytic function on $D(s)$ with finite momentum majorant norm. Then for $0<\sigma<s$, we have
\begin{equation}\label{App1}
|u|_{s-\sigma,\tau+1}\leq(\frac{\tau+1}{e})^{\tau+1}
\frac{1}{\sigma^{\tau+1}}|u|_{s,\mathbf{a},0}.\end{equation}
\end{lem}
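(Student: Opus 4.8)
The plan is to reduce the claimed bound to the elementary one–variable extremization already recorded as inequality \eqref{17.11.10.1} of Lemma \ref{lem18.4.1.1}. Write $u(x)=\sum_{k\in\mathbb{Z}^n}\hat{u}_k e^{\mathbf{i}k\cdot x}$. By the definition of the weighted norm in \eqref{17.7.20.18},
\begin{equation*}
|u|_{s-\sigma,\tau+1}=\sum_{k\in\mathbb{Z}^n}|\hat{u}_k|\,|k|^{\tau+1}e^{|k|(s-\sigma)}
=\sum_{k\in\mathbb{Z}^n}\Big(|\hat{u}_k|\,e^{|k|s}\Big)\cdot\Big(|k|^{\tau+1}e^{-|k|\sigma}\Big).
\end{equation*}
First I would pull the second factor out uniformly in $k$: since the function $t\mapsto t^{\tau+1}e^{-t\sigma}$ on $[0,\infty)$ attains its maximum at $t=(\tau+1)/\sigma$ with value $\big((\tau+1)/(e\sigma)\big)^{\tau+1}$, one has $|k|^{\tau+1}e^{-|k|\sigma}\leq\big(\frac{\tau+1}{e}\big)^{\tau+1}\sigma^{-(\tau+1)}$ for every $k\in\mathbb{Z}^n$; this is exactly \eqref{17.11.10.1} with $\nu=\tau+1$. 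Hence
\begin{equation*}
|u|_{s-\sigma,\tau+1}\leq\Big(\frac{\tau+1}{e}\Big)^{\tau+1}\frac{1}{\sigma^{\tau+1}}\sum_{k\in\mathbb{Z}^n}|\hat{u}_k|\,e^{|k|s}.
\end{equation*}

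It then remains to dominate $\sum_k|\hat{u}_k|e^{|k|s}$ by $|u|_{s,\mathbf{a},0}$. This is immediate from the definition of the momentum majorant norm of a function: since $\mathbf{a}\geq0$ we have $e^{\mathbf{a}|\pi(k,0)|}\geq1$ for all $k$, so
\begin{equation*}
\sum_{k\in\mathbb{Z}^n}|\hat{u}_k|\,e^{|k|s}\leq\sum_{k\in\mathbb{Z}^n}|\hat{u}_k|\,e^{|k|s}e^{\mathbf{a}|\pi(k,0)|}=|u|_{s,\mathbf{a},0}.
\end{equation*}
Combining the two displays gives \eqref{App1}, and in particular the finiteness of $|u|_{s,\mathbf{a},0}$ guarantees the interchange of sum and bound is legitimate (all terms are non-negative). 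There is essentially no obstacle here: the only non-trivial ingredient is the extremal value of $t^{\tau+1}e^{-t\sigma}$, which is the content of Lemma \ref{lem18.4.1.1}, and the rest is term-by-term comparison of absolutely convergent series. If desired, one can avoid invoking \eqref{17.11.10.1} directly and instead prove the bound on $t^{\tau+1}e^{-t\sigma}$ inline by differentiating, but reusing the appendix lemma keeps the argument short.
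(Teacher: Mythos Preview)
Your proof is correct and follows essentially the same approach as the paper's own argument: factor $|k|^{\tau+1}e^{|k|(s-\sigma)}$ as $(|k|^{\tau+1}e^{-|k|\sigma})\cdot e^{|k|s}$, apply the sup-bound \eqref{17.11.10.1} from Lemma~\ref{lem18.4.1.1} to the first factor, and then use $e^{\mathbf{a}|\pi(k,0)|}\geq1$ to pass from $\sum_k|\hat{u}_k|e^{|k|s}$ to $|u|_{s,\mathbf{a},0}$. The steps and the key lemma invoked are identical.
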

\begin{proof} By the definition of $|\cdot|_{s,\tau+1}$ and $|\cdot|_{s,\mathbf{a},0}$, we otain
\begin{eqnarray}\nonumber|u|_{s-\sigma,\tau+1}
&=&\sum_{k\in\mathbb{Z}^n}|\hat{u}_k|e^{|k|(s-\sigma)}|k|^{\tau+1}
\\
\nonumber&\leq&\sup_{k\in\mathbb{Z}^n}(e^{-|k|\sigma}
|k|^{\tau+1})\sum_{k\in\mathbb{Z}^n}|\hat{u}_k|e^{|k|s}\\
\label{18.3.19.2}&\mathop\leq^{}&(\frac{\tau+1}{e})^{\tau+1}
\frac{1}{\sigma^{\tau+1}}\sum_{k\in\mathbb{Z}^n}|\hat{u}_k|e^{|k|s}\\
\nonumber&\leq&(\frac{\tau+1}{e})^{\tau+1}
\frac{1}{\sigma^{\tau+1}}\sum_{k\in\mathbb{Z}^n}|
\hat{u}_k|e^{|k|s}e^{\mathbf{a}|\sum_{b=1}^nk_bj_b|}\\
\nonumber&=&(\frac{\tau+1}{e})^{\tau+1}
\frac{1}{\sigma^{\tau+1}}|u|_{s,\mathbf{a},0},
\end{eqnarray}
where in \eqref{18.3.19.2} we use \eqref{17.11.10.1}.
\end{proof}

\begin{lem}\label{lem17.9.7.1} Let $R=(R_{ij})_{i,j\in\mathbb{Z}_*}$ be a matrix depending on $x\in D(s)$ such that the corresponding Hamiltonian
vector field $X_{\langle Rz,\bar{z}\rangle}$ has finite momentum majorant norm on $D(s,r)$.
Suppose $F=(F_{ij})_{i,j\in\mathbb{Z}_*}$ is another matrix depending on $x$ whose elements satisfy
\begin{equation}\label{17.9.30.1}\sum_{k\in\mathbb{Z}^n}|\hat{F}_{ijk}|
e^{|k|(s-\sigma)}e^{\mathbf{a}|\pi(k,i-j)|}\leq\frac{1}{\max\{|i|,|j|\}}
\sum_{k\in\mathbb{Z}^n}|\hat{R}_{ijk}|e^{|k|s}
e^{\mathbf{a}|\pi(k,i-j)|}\end{equation}
for $0<\sigma<\min\{1,s/2\}$. Then for $p\geq0$, $\mathbf{a}\geq0$, we have
\begin{equation}\label{17.9.30.2}||X_{\langle Fz,\bar{z}\rangle}||_{s-2\sigma,r,p,\mathbf{a}}
\leq\frac{3}{\sigma}||X_{\langle Rz,\bar{z}\rangle}||_{s,r,p-1,\mathbf{a}}.\end{equation}
\end{lem}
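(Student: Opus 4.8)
The plan is to reduce the matrix-valued estimate to scalar estimates component by component, using the hypothesis \eqref{17.9.30.1} which already builds in the gain of a factor $1/\max\{|i|,|j|\}$. First I would recall the explicit formula for the Hamiltonian vector field of a quadratic-in-$(z,\bar z)$ Hamiltonian: if $G=\langle Fz,\bar z\rangle=\sum_{i,j}F_{ij}z_i\bar z_j$, then with respect to the symplectic structure \eqref{7.13.4} one has $X_G=(0,-(\sigma_{j_b}\partial_{x_b}G)_b,-\mathbf{i}(\sigma_j\partial_{\bar z_j}G)_j,\mathbf{i}(\sigma_j\partial_{z_j}G)_j)^T$, so that $\partial_{\bar z_\ell}G=\sum_i F_{i\ell}z_i$ and $\partial_{z_\ell}G=\sum_j F_{\ell j}\bar z_j$, while $\partial_{x_b}G=\sum_{i,j}(\partial_{x_b}F_{ij})z_i\bar z_j$. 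The momentum majorant norm \eqref{2.1.16} is taken componentwise over $\mathbf{v}\in V$, with momentum shifts prescribed by \eqref{2.1.13}; the key point is that the momentum of the monomial $e^{\mathbf{i}k\cdot x}z_i\bar z_j$ appearing in the $\bar{\tilde z}_\ell$-component (i.e.\ with $\mathbf v=\bar{\tilde z}_\ell$, so an extra $+\ell$ in the momentum) matches exactly the momentum $\pi(k,i-j)=\pi(k,i,j)$ that enters \eqref{17.9.30.1}, once one accounts for the index bookkeeping between the Hamiltonian $G$ and its vector field $X_G$. So the momentum weights on both sides of \eqref{17.9.30.2} align term by term.

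Next I would carry out the estimate. For the $\bar{\tilde z}_\ell$-component of $X_{\langle Fz,\bar z\rangle}$, which up to the harmless factor $\sigma_\ell$ equals $\sum_i F_{i\ell}z_i$, the $\ell^{a,p}$-type weight in $\|\cdot\|_{s,r,p}$ contributes a factor $e^{a|\ell|}|\ell|^p$ per component; comparing with $X_{\langle Rz,\bar z\rangle}$, whose analogous component carries weight $e^{a|\ell|}|\ell|^{p-1}$, the discrepancy is exactly one power of $|\ell|$, which is absorbed by the $1/\max\{|i|,|\ell|\}\le 1/|\ell|$ gain in \eqref{17.9.30.1}. The same works for the $\tilde z_\ell$-component (sum over $j$ of $F_{\ell j}\bar z_j$, gain $1/\max\{|\ell|,|j|\}\le 1/|\ell|$). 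For the $\tilde y_b$-components one differentiates in $x_b$: this brings down a factor $|k_b|\le|k|$, and here \eqref{17.9.30.1} is applied at analyticity width $s-\sigma$ rather than $s$, so the extra $|k|$ is controlled by $\sup_k |k|e^{-|k|\sigma}\le (e\sigma)^{-1}$ via \eqref{17.11.10.1} with $\nu=1$ — this is where the factor $1/\sigma$ and the constant $3$ in \eqref{17.9.30.2} come from (one also needs $\sigma<\min\{1,s/2\}$ so that the shrinkage is legitimate and the remaining width $s-2\sigma$ is positive). One then takes the supremum over $(y,z,\bar z)\in D(r)$ as in the definition \eqref{2.1.16}; since the substituted monomials $|z^\alpha||\bar z^\beta|$ and $|y|^i$ factor through in the same way on both sides, the sup-over-$D(r)$ passes through the componentwise comparison.

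The main obstacle I anticipate is purely bookkeeping: carefully matching the momentum indices in \eqref{2.1.13} for each of the three relevant component types ($\tilde y_b$, $\tilde z_\ell$, $\bar{\tilde z}_\ell$) against the momentum $\pi(k,i-j)$ in the hypothesis \eqref{17.9.30.1}, and checking that the weight exponent $p$ versus $p-1$ really does differ by exactly one power of the relevant index $|\ell|$ in every case, so that the gain in \eqref{17.9.30.1} is neither wasted nor insufficient. Once the indexing is pinned down, the inequality \eqref{17.9.30.2} follows by summing the componentwise bounds and collecting the constant; the three elementary inequalities of Lemma \ref{lem18.4.1.1} (in particular \eqref{17.11.10.1}) supply everything analytic that is needed, and no small-divisor input is required here.
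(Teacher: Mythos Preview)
Your proposal is correct and follows essentially the same route as the paper: compute $X_{\langle Fz,\bar z\rangle}$ explicitly, use the $1/\max\{|i|,|j|\}$ gain from \eqref{17.9.30.1} to lift the weight from $p-1$ to $p$ on the $z,\bar z$-components, and handle the $\tilde y_b$-components via $\sup_k|k|e^{-|k|\sigma}\le(e\sigma)^{-1}$ from \eqref{17.11.10.1}. One small execution point worth noting: in the paper the $y$-ingredient is not compared directly to the $y$-ingredient of $X_{\langle Rz,\bar z\rangle}$ but is first bounded by the $z$-ingredient of $X_{\langle Fz,\bar z\rangle}$ at weight $p$ (via the pairing $\langle u,|\bar z|\rangle\le\|u\|_{-a,-p}\|\bar z\|_{a,p}\le r\,\|u\|_{a,p}$), and only then is \eqref{17.9.30.1} invoked --- this is what makes the $r$-weights balance cleanly and yields the final constant $(\tfrac{2}{\sigma}+1)\le\tfrac{3}{\sigma}$.
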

\begin{proof}
By \eqref{7.13.2}, we obtain
\begin{equation}X_{\langle Fz,\bar{z}\rangle}=
(0,-(\sigma_{j_b}\partial_{x_b}\langle Fz,\bar{z}\rangle)_{1\leq b\leq n},-\mathbf{i}(\sigma_j\partial_{\bar{z}_j}\langle Fz,\bar{z}\rangle
)_{j\in\mathbb{Z}_*},\mathbf{i}(\sigma_j\partial_{z_j}\langle Fz,\bar{z}\rangle
)_{j\in\mathbb{Z}_*})^T\end{equation}
and $X_{\langle Rz,\bar{z}\rangle}$ is defined similarly.
In view of
$\langle Fz,\bar{z}\rangle=\sum_{i,j\in\mathbb{Z}_*\atop{k\in\mathbb{Z}^n}}
\hat{F}_{ijk}e^{\mathbf{i}k\cdot x}z_i\bar{z}_j,$
we have
\begin{equation*}-(\sigma_{j_b}\partial_{x_b}\langle Fz,\bar{z}\rangle)_{1\leq b\leq n}=(\sum_{i,j\in\mathbb{Z}_*\atop{k\in\mathbb{Z}^n}}-\mathbf{i}\sigma_{j_b}k_b
\hat{F}_{ijk}e^{\mathbf{i}k\cdot x}z_i\bar{z}_j)_{1\leq b\leq n},\end{equation*}
\begin{equation*}-\mathbf{i}(\sigma_j\partial_{\bar{z}_j}\langle Fz,\bar{z}\rangle
)_{j\in\mathbb{Z}_*}=(\sum_{i\in\mathbb{Z}_*\atop{k}\in\mathbb{Z}^n}
-\mathbf{i}\sigma_j\hat{F}_{ijk}e^{\mathbf{i}k\cdot x}z_i)_{j\in\mathbb{Z}_*},\end{equation*}
\begin{equation*}\mathbf{i}(\sigma_j\partial_{z_j}\langle Fz,\bar{z}\rangle
)_{j\in\mathbb{Z}_*}=(\sum_{i\in\mathbb{Z}_*\atop{k}\in\mathbb{Z}^n}
\mathbf{i}\sigma_j\hat{F}_{jik}e^{\mathbf{i}k\cdot x}\bar{z}_i)_{j\in\mathbb{Z}_*}.\end{equation*}
The first ingredient of $||X_{\langle Fz,\bar{z}\rangle}||_{s-2\sigma,r,p,\mathbf{a}}$ is zero, and the second ingredient can be controlled by the third ingredient (or the fourth ingredient):
\begin{eqnarray}\nonumber&&\sup_{(y,z,\bar{z})\in D(r)}\frac{1}{r^2}|(\sum_{i,j\in\mathbb{Z}_*\atop{k\in\mathbb{Z}^n}}
|-\mathbf{i}\sigma_{j_b}k_b
\hat{F}_{ijk}|e^{\mathbf{a}|\pi(k,i-j)|}e^{|k|(s-2\sigma)}|z_i||\bar{z}_j|)
_{1\leq b\leq n}|_1\\
\nonumber&&=\sup_{(y,z,\bar{z})\in D(r)}\frac{1}{r^2}\sum_{i,j\in\mathbb{Z}_*\atop{k\in\mathbb{Z}^n}}
|k||\hat{F}_{ijk}|e^{\mathbf{a}|\pi(k,i-j)|}e^{|k|(s-2\sigma)}|z_i||\bar{z}_j|\\
\nonumber&&\leq\sup_{k\in\mathbb{Z}^n}|k|e^{-|k|\sigma}\cdot\sup_{(y,z,\bar{z})\in D(r)}\frac{1}{r^2}\sum_{
i,j\in\mathbb{Z}_*}\big(\sum_{k\in\mathbb{Z}^n}|\hat{F}_{ijk}|
e^{\mathbf{a}|\pi(k,i-j)|}e^{|k|(s-\sigma)}\big)|z_i||\bar{z}_j|\\
\nonumber&&\leq\frac{1}{e\sigma}\sup_{(y,z,\bar{z})\in D(r)}\frac{1}{r^2}\langle(\sum_{i\in\mathbb{Z}_*
\atop{k\in\mathbb{Z}^n}}|\hat{F}_{ijk}|e^{\mathbf{a}|\pi(k,i-j)|}
e^{|k|(s-\sigma)}|z_i|)
_{j\in\mathbb{Z}_*},|\bar{z}|\rangle\\
\nonumber&&\leq\frac{1}{e\sigma}\sup_{(y,z,\bar{z})\in D(r)}\frac{1}{r^2}||(\sum_{i\in\mathbb{Z}_*
\atop{k\in\mathbb{Z}^n}}|\hat{F}_{ijk}|e^{\mathbf{a}|\pi(k,i-j)|}
e^{|k|(s-\sigma)}|z_i|)_{j\in\mathbb{Z}_*}||_{-a,-p}
||\bar{z}|||_{a,p}\\
\label{17.11.6.2}&&\leq\frac{1}{e\sigma}\sup_{(y,z,\bar{z})\in D(r)}\frac{1}{r}||(\sum_{i\in\mathbb{Z}_*\atop{k\in\mathbb{Z}^n}}
|-\mathbf{i}\sigma_j\hat{F}_{ijk}|e^{\mathbf{a}|\pi(k,i-j)|}
e^{|k|(s-\sigma)
}|z_i|)_{j\in\mathbb{Z}_*}||_{a,p}.
\end{eqnarray}
Moreover, the estimates for the third and fourth ingredients of $||X_{\langle Fz,\bar{z}\rangle}||_{s-2\sigma,r,p,\mathbf{a}}$ are parallel. Thus there only remains the estimate for the third ingredient:
\begin{eqnarray}\nonumber&&\sup_{(y,z,\bar{z})\in D(r)}\frac{1}{r}||(\sum_{i\in\mathbb{Z}_*\atop{k\in\mathbb{Z}^n}}
|-\mathbf{i}\sigma_j\hat{F}_{ijk}|e^{\mathbf{a}|\pi(k,i-j)|}e^{|k|(s-\sigma)
}|z_i|)_{j\in\mathbb{Z}_*}||_{a,p}\\
\nonumber&&\leq\sup_{(y,z,\bar{z})\in D(r)}\frac{1}{r}||(\frac{1}{|j|}\sum_{i\in\mathbb{Z}_*\atop{k\in\mathbb{Z}^n}}
|\hat{R}_{ijk}|e^{\mathbf{a}|\pi(k,i-j)|}e^{|k|s
}|z_i|)_{j\in\mathbb{Z}_*}||_{a,p}\\
\label{17.9.26.7}&&=\sup_{(y,z,\bar{z})\in D(r)}\frac{1}{r}||(\sum_{i\in\mathbb{Z}_*\atop{k\in\mathbb{Z}^n}}
|-\mathbf{i}\sigma_j\hat{R}_{ijk}|e^{\mathbf{a}|\pi(k,i-j)|}e^{|k|s
}|z_i|)_{j\in\mathbb{Z}_*}||_{a,p-1}.
\end{eqnarray}
%
%
In view of \eqref{17.11.6.2} and \eqref{17.9.26.7}, we get \begin{equation}||X_{\langle Fz,\bar{z}\rangle}||_{s-2\sigma,r,p,\mathbf{a}}
\leq(\frac{2}{\sigma}+1)||X_{\langle Rz,\bar{z}\rangle}||_{s,r,p-1,\mathbf{a}},\end{equation}
and thus \eqref{17.9.30.2}.
\end{proof}

By \eqref{17.11.8.2}, if $X:D(s,r)\rightarrow\mathcal{P}^{a,q}$ with $||X||_{s,r,q,\mathbf{a}}<+\infty$, then $X$ is analytic, namely the
Frechet differential $D(s,r)\ni v\mapsto dX(v)\in\mathcal{L}(D(s,r),\mathcal{P}^{a,q})$ is continuous.
 The commutator of two vector fields $X:D(s,r)\rightarrow\mathcal{P}^{a,q},
Y:D(s,r)\rightarrow\mathcal{P}^{a,p}$ is
\begin{equation}[X,Y](v):=dX(v)[Y(v)]-dY(v)[X(v)],\quad\forall
\ v\in D(s,r).\end{equation}
The next lemma is the estimate for the momentum majorant norm of
the commutator of two vector fields. It is Proposition 2.1 in \cite{Berti1} with small modification, that is, the definition space and target space of one vector field are different.
\begin{lem}\label{lem17.12.22.1}Let  $X:D(s,r)\rightarrow\mathcal{P}^{a,q}$ and $Y:D(s,r)\rightarrow\mathcal{P}^{a,p}$ with $||X||_{s,r,q,\mathbf{a}},||Y||_{s,r,p,\mathbf{a}}<+\infty$. Then for $s/2\leq s'<s$ and $r/2\leq r'<r$,
\begin{equation}\label{17.12.22.2}||[X,Y]||_{s',r',q,\mathbf{a}}
\leq 2^{2n+3}\max\{\frac{s}{s-s'},\frac{r}{r-r'}\}||X||_{s,r,q,\mathbf{a}}
||Y||_{s,r,p,\mathbf{a}}.\end{equation}
\end{lem}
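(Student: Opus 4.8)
The plan is to follow the proof of Proposition~2.1 in \cite{Berti1}, the only new feature being that $X$ maps into $\mathcal P^{a,q}$ while $Y$ maps into $\mathcal P^{a,p}$. The idea is to pass to \emph{majorant} vector fields: to $X=\sum_{\mathbf v\in V}\sum_{(k,i,\alpha,\beta)\in\mathbb I}X^{(\mathbf v)}_{k,i,\alpha,\beta}e^{\mathbf i k\cdot x}y^iz^\alpha\bar z^\beta\partial_{\mathbf v}$ one associates $\underline X$, obtained by replacing each coefficient with $e^{\mathbf a|\pi(k,\alpha,\beta;\mathbf v)|}\,|X^{(\mathbf v)}_{k,i,\alpha,\beta}|\ge 0$. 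Since $\pi(\,\cdot\,;\mathbf v)$ is real-valued, $\underline X$ is again a formal vector field, and comparing with \eqref{2.1.16} one sees that $\|X\|_{s,r,q,\mathbf a}$ equals the ordinary sup-norm $\|\underline X\|_{s,r,q;D(s,r)}$ of $\underline X$ (the supremum over $D(s,r)$ being attained on the ``positive octant''), and similarly $\|Y\|_{s,r,p,\mathbf a}=\|\underline Y\|_{s,r,p;D(s,r)}$. Thus everything is reduced to a sup-norm estimate for vector fields with non-negative real coefficients, at the expense of inserting the exponential momentum weights.

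The point that makes this reduction legitimate is that the momentum is \emph{additive along the bracket}. Writing $[X,Y]^{(\mathbf v)}=\sum_{\mathbf w\in V}\big((\partial_{\mathbf w}X^{(\mathbf v)})Y^{(\mathbf w)}-(\partial_{\mathbf w}Y^{(\mathbf v)})X^{(\mathbf w)}\big)$, every monomial of $(\partial_{\mathbf w}X^{(\mathbf v)})Y^{(\mathbf w)}$ is the graft, via $\partial_{\mathbf w}$, of a monomial of $Y$ carrying the label $\mathbf w$ onto a monomial of $X$ carrying the label $\mathbf v$; a one-line check in each of the three cases $\mathbf w\in\{\tilde x_b\},\{\tilde y_b\},\{\tilde z_j,\bar{\tilde z}_j\}$ (using only that $\pi$ in \eqref{2.1.14} is linear and the definition \eqref{2.1.13}) shows that the vector-field momentum of the graft equals the sum of the vector-field momenta of the two factors. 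Hence $e^{\mathbf a|\pi|}$ of the graft is bounded by the product of the two factors' weights by the triangle inequality, and since the combinatorial factors $\alpha_j,i_b,|k_b|$ produced by $\partial_{\mathbf w}$ occur identically on both sides, one gets the coefficientwise domination $\underline{[X,Y]}\preceq d\underline X[\underline Y]+d\underline Y[\underline X]$, where both terms on the right have non-negative coefficients. By monotonicity and subadditivity of the sup-norm it therefore suffices to prove, uniformly on $D(s',r')$,
\[
\|d\underline X(v)[\underline Y(v)]\|_{s',r',q}\ \le\ 2^{2n+2}\max\Big\{\tfrac{s}{s-s'},\tfrac{r}{r-r'}\Big\}\,\|\underline X\|_{s,r,q}\,\|\underline Y\|_{s,r,p},
\]
together with the symmetric bound for $d\underline Y(v)[\underline X(v)]$ (with the roles of $p$ and $q$ suitably exchanged, which is why one obtains the final $2^{2n+3}$).

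For the displayed estimate I would write $d\underline X(v)[\underline Y(v)]=\sum_{\mathbf w\in V}(\partial_{\mathbf w}\underline X)(v)\,\underline Y^{(\mathbf w)}(v)$ and split the sum over $\mathbf w$ into the $2n$ \emph{internal} directions $\{\tilde x_b,\tilde y_b\}$ and the \emph{external} directions $\{\tilde z_j,\bar{\tilde z}_j\}_{j\in\mathbb Z_*}$. The internal terms are handled by one-variable Cauchy estimates: passing from analyticity width $s$ to $s'$ costs a factor comparable to $1/(s-s')$, via $\sup_{t\ge0}te^{-t(s-s')}=(e(s-s'))^{-1}$ together with the mode sum \eqref{17.7.20.1}, and passing from the polydiscs of radii $r^2,r$ to those of radii $(r')^2,r'$ costs a factor comparable to $1/(r-r')$; incorporating the weights $1/s,1/r^2,1/r$ of \eqref{17.9.18.1} and the constraints $0<s,r<1$, $s/2\le s'<s$, $r/2\le r'<r$ turns these into $\max\{s/(s-s'),r/(r-r')\}$. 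The external directions are treated exactly as in Lemma~\ref{lem17.9.7.1}: for each fixed $\mathbf v$ the series $\sum_{j\in\mathbb Z_*}(\partial_{z_j}\underline X^{(\mathbf v)})(v)\,\underline Y^{(\tilde z_j)}(v)$ and its $\bar z$-analogue are summed using Cauchy in the $z$-radius and the $\ell^{a,p}$-structure of $\underline Y$ — the weights $e^{a|j|}|j|^p$ that $\underline Y^{(\tilde z_j)}$ carries inside $\|\underline Y\|_{s,r,p}$ are precisely what make the $j$-sum converge — while the resulting $z,\bar z$-components are measured in the $\ell^{a,q}$-norm inherited from $\underline X$, so that no regularity index is lost. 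Collecting the finitely many internal contributions, the mode sums, and the external contribution, and repeating the computation for $d\underline Y[\underline X]$, yields the constant $2^{2n+3}\max\{s/(s-s'),r/(r-r')\}$ and finishes the proof.

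I expect the main obstacle to be the external part of the last step: one must verify that the infinite sums $\sum_j(\partial_{z_j}\underline X)(v)\,\underline Y^{(\tilde z_j)}(v)$ and $\sum_j(\partial_{\bar z_j}\underline X)(v)\,\underline Y^{(\bar{\tilde z}_j)}(v)$ converge in the momentum majorant norm on $D(s',r')$ and are dominated by $\|\underline X\|_{s,r,q}\|\underline Y\|_{s,r,p}$ — this is the only place where the $(s,r)$-weighted $\ell^{a,p}/\ell^{a,q}$ structure is used essentially, and where the asymmetry of the source and target spaces of $X$ and $Y$ (the ``small modification'' relative to \cite{Berti1}) actually enters, since the $z,\bar z$-block of $d\underline X(v)$ must be seen as a bounded operator $\ell^{a,p}\to\ell^{a,q}$. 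Once this is settled, the remainder is routine bookkeeping of Cauchy factors and of the combinatorial constant.
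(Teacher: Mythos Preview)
Your proposal is correct and uses essentially the same ingredients as the paper: additivity of the momentum under the commutator, passage to majorant vector fields, and a Cauchy-type estimate for $dX[Y]$ that tolerates $X$ landing in $\mathcal P^{a,q}$ while $Y$ lands in $\mathcal P^{a,p}$. The only organizational difference is that the paper first isolates the $\mathbf a=0$ case (quoting the Cauchy estimate and commutator bound from \cite{Berti2}, Lemmas~2.14--2.15, with the obvious modification that $dX(v)$ is viewed in $\mathcal L(\mathcal P^{a,p},\mathcal P^{a,q})$) and then, for $\mathbf a>0$, decomposes $X=\sum_{h}X_h$, $Y=\sum_{h}Y_h$ into momentum-homogeneous pieces and applies the $\mathbf a=0$ bound to each $[X_{h_1},Y_{h_2}]$, whereas you carry the exponential momentum weights through the majorant construction in a single pass; the two are equivalent.
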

\begin{proof} For $\mathbf{a}=0$,
the proof is parallel to that of Lemma 2.15 in \cite{Berti2}, in which the following Cauchy estimate (Lemma 2.14 in \cite{Berti2}) is essential
\begin{equation}\label{17.12.22.4}\sup_{v\in D(s',r')}||dW(v)||_{\mathcal{L}
((\mathcal{P}^{a,p},||\cdot||_{s,r,p}),(\mathcal{P}^{a,p},
||\cdot||_{s',r',p}))}\leq 4\max
\{\frac{s}{s-s'},\frac{r}{r-r'}\}||W||_{s,r,p;
D(s,r)},\end{equation}
where
$W:D(s,r)\rightarrow\mathcal{P}^{a,p}\ \mbox{with}\  ||W||_{s,r,p,0}<+\infty.$
Note that $X:D(s,r)\rightarrow\mathcal{P}^{a,q}$. The difference between $X$ and $W$ lies in that the definition space and target space are different. However, parallelly to the proof of \eqref{17.12.22.4}, we can also get the Cauchy estimate
\begin{equation}\label{17.12.22.1}\sup_{v\in D(s',r')}||dX(v)||_{\mathcal{L}((\mathcal{P}^{a,p},||\cdot||_{s,r,p}),
(\mathcal{P}^{a,q},||\cdot||_{s',r',q}))}
\leq 4\max\{\frac{s}{s-s'},
\frac{r}{r-r'}\}||X||_{s,r,q;D(s,r)}.\end{equation}
Therefore, following the proof of Lemma 2.15 in \cite{Berti2} and using \eqref{17.12.22.1}, we get
\begin{equation}\label{17.12.22.7}||[X,Y]||_{s',r',q,0}
\leq 2^{2n+3}\max\{\frac{s}{s-s'},\frac{r}{r-r'}\}||X||_{s,r,q,0}
||Y||_{s,r,p,0}.\end{equation}

For $\mathbf{a}>0$, the proof follows the idea of Proposition 2.1 in \cite{Berti1}. Write
$X=\sum_{h\in\mathbb{Z}}X_h,$
where $X_h$ is the sum of the monomials with $\pi(X)=h$. Similarly, write $Y=\sum_{h\in\mathbb{Z}}Y_h$, and thus
$$[X,Y]=\sum_{h_1,h_2\in\mathbb{Z}}[X_{h_1},Y_{h_2}].$$
It is easy to see that
\begin{equation}\label{17.12.22.5}\pi([X_{h_1},Y_{h_2}])=h_1+h_2=\pi(X_{h_1})
+\pi(Y_{h_2}).\end{equation}
By \eqref{17.12.22.7} and \eqref{17.12.22.5}, the estimate
\begin{eqnarray}\nonumber
||[X_{h_1},Y_{h_2}]||_{s',r',q,\mathbf{a}}&=&
e^{\mathbf{a}(h_1+h_2)}||[X_{h_1},Y_{h_2}]||_{s',r',q,0}\\
\nonumber&\leq&2^{2n+3}\max\{\frac{s}{s-s'},\frac{r}{r-r'}\}
e^{\mathbf{a}(h_1+h_2)}||X_{h_1}||_{s,r,q,0}||Y_{h_2}||_{s,r,p,0}\\
\label{17.12.22.6}&=&2^{2n+3}\max\{\frac{s}{s-s'},\frac{r}{r-r'}\}
||X_{h_1}||_{s,r,q,\mathbf{a}}||Y_{h_2}||_{s,r,p,\mathbf{a}}\end{eqnarray}
holds true. Finally, \eqref{17.12.22.2} for $\mathbf{a}>0$ is obtained by
 summing $h_1$ and $h_2$.
\end{proof}
The following lemma is the estimate for the momentum majorant norm of the transformed Hamiltonian vector field. It is Lemma 2.17 in \cite{Berti2} with small modification, that is, the definition space and target space of the Hamiltonian vector field $X_H$ are different.
\begin{lem}\label{lem17.12.23.1}Let $r/2\leq r'<r,s/2\leq s'<s$ and $F$ with
\begin{equation}2^{2n+5}e\max\{\frac{s}{s-s'},\frac{r}{r-r'}\}
||X_F||_{s,r,p,\mathbf{a}}<1.\end{equation}
Then the time $1$-Hamiltonian flow
$\Phi_F^1:D(s',r')\rightarrow D(s,r)$
is well defined, analytic, symplectic, and $\forall \ H$ with $||X_H||_{s,r,q,\mathbf{a}}<+\infty$, we have
\begin{equation}\label{17.12.23.2}||X_{H\circ\Phi_F^1}||_{s',r',q,\mathbf{a}}
\leq\frac{||X_H||_{s,r,q,\mathbf{a}}}{1-2^{2n+5}e
\max\{\frac{s}{s-s'},\frac{r}{r-r'}\}||X_F||_{s,r,p,
\mathbf{a}}}.\end{equation}
\end{lem}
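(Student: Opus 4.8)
The plan is to follow the template of Lemma 2.17 in \cite{Berti2}, adapting only the bookkeeping needed because the source and target spaces of $X_H$ differ (it maps $\mathcal{P}^{a,p}\to\mathcal{P}^{a,q}$). First I would establish that the flow $\Phi_F^t$ is well defined on $D(s',r')$ for $|t|\le 1$ and maps into $D(s,r)$. This is a standard contraction/ODE argument: since $||X_F||_{s,r,p,\mathbf{a}}<+\infty$, the field $X_F$ is analytic (by \eqref{17.11.8.2}) and, by the Cauchy estimate in the spirit of \eqref{17.12.22.4}, its differential is bounded on interior subdomains; the smallness hypothesis $2^{2n+5}e\max\{\frac{s}{s-s'},\frac{r}{r-r'}\}||X_F||_{s,r,p,\mathbf{a}}<1$ guarantees that a trajectory starting in $D(s',r')$ does not leave $D(s,r)$ before time $1$, so $\Phi_F^1$ is well defined, analytic and symplectic (the last because $X_F$ is Hamiltonian).

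Next I would derive the transformation formula for the vector field. Writing $H_t:=H\circ\Phi_F^t$, one has $\frac{d}{dt}X_{H_t}=[X_F,X_{H_t}]$ (the pushforward of a Hamiltonian vector field under a Hamiltonian flow), so by iterating,
\begin{equation}
X_{H\circ\Phi_F^1}=\sum_{\ell\ge 0}\frac{1}{\ell!}\,\mathrm{ad}_{X_F}^{\ell}X_H,
\end{equation}
where $\mathrm{ad}_{X_F}Y:=[X_F,Y]$. Now I apply Lemma \ref{lem17.12.22.1} repeatedly on a telescoping chain of shrinking domains: at the $\ell$-th stage pass from width parameters $(s_\ell,r_\ell)$ to $(s_{\ell+1},r_{\ell+1})$ with $s_\ell-s_{\ell+1}$ and $r_\ell-r_{\ell+1}$ a fixed fraction of the remaining gap, e.g. $s_\ell=s-\frac{\ell}{\ell+1}(s-s')$ so that $\frac{s_\ell}{s_\ell-s_{\ell+1}}\lesssim (\ell+1)\frac{s}{s-s'}$, and similarly in $r$. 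Each application of \eqref{17.12.22.2} costs a factor $2^{2n+3}\max\{\frac{s_\ell}{s_\ell-s_{\ell+1}},\frac{r_\ell}{r_\ell-r_{\ell+1}}\}||X_F||_{s,r,p,\mathbf{a}}$, and the product of the $(\ell+1)$ geometric factors against $\frac{1}{\ell!}$ yields, after using $(\ell+1)^\ell/\ell!\le e^{\ell+1}$-type bounds, a term bounded by $\big(2^{2n+5}e\max\{\frac{s}{s-s'},\frac{r}{r-r'}\}||X_F||_{s,r,p,\mathbf{a}}\big)^{\ell}||X_H||_{s,r,q,\mathbf{a}}$. Here the modification relative to \cite{Berti2} is only that the first commutator $[X_F,X_H]$ lands in $\mathcal{P}^{a,q}$ (using the mixed version of Lemma \ref{lem17.12.22.1}, where the two fields have different target spaces), and every subsequent bracket with $X_F$ preserves $\mathcal{P}^{a,q}$; so the chain of estimates is carried out with the $q$-index on the $X_H$-side throughout. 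Summing the geometric series with ratio $<1$ gives exactly \eqref{17.12.23.2}.

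The step I expect to be the main obstacle is making the telescoping domain argument completely airtight while keeping the final constant equal to $2^{2n+5}e$ rather than something larger: one must choose the partition of the width gaps so that the accumulated $\prod_\ell(\ell+1)$ factors are absorbed into $e^{\ell}$ cleanly, and one must check that the flow indeed stays in each intermediate domain $D(s_\ell,r_\ell)$ at every intermediate time so that the commutator estimates apply term by term. Both points are routine in the bounded-field setting, but since $\mathbf{a}>0$ and the target index $q$ enters, I would verify (as in the proof of Lemma \ref{lem17.12.22.1} itself) that decomposing into fixed-momentum pieces $X_F=\sum_h X_{F,h}$, $X_H=\sum_h X_{H,h}$ and using additivity of momentum under commutators lets the $\mathbf{a}$-weight pass through all the estimates unchanged — reducing the $\mathbf{a}>0$ case to the $\mathbf{a}=0$ case exactly as in \eqref{17.12.22.5}--\eqref{17.12.22.6}. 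Once that reduction is in place, the remainder is the standard Lie-series bound and the result follows.
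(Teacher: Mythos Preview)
Your proposal is correct and follows essentially the same approach as the paper: the paper's proof simply states that, with Lemma~\ref{lem17.12.22.1} in hand (which already handles the mixed $p$/$q$ target spaces), the argument is parallel to Lemma~2.17 in \cite{Berti2} and omits the details. Your Lie-series expansion with telescoping domains, repeated application of \eqref{17.12.22.2}, and the momentum-decomposition reduction from $\mathbf{a}>0$ to $\mathbf{a}=0$ is exactly what that parallel argument amounts to.
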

\begin{proof}
With the help of Lemma \ref{lem17.12.22.1} which deal with the case that the definition space and target space of one vector field are different, the proof is parallel to Lemma 2.17 in \cite{Berti2}. Thus, we omit the details here.
\end{proof}

\textbf{Acknowledgement.} {Part of this article was written when the first author was visiting School of Mathematics, Sichuan University. She thanks the school for its pleasant working atmosphere and Professor Weinian Zhang for his invaluable help. The second author was supported by National Research
Foundation of China (No. 11671280).}

\end{document}